\numberwithin{equation}{section}
\newtheorem{thm}{Theorem}[section]
\newtheorem*{thm*}{Theorem}
\newtheorem{lem}[thm]{Lemma}
\newtheorem{prop}[thm]{Proposition}
\newtheorem{defn}[thm]{Definition}
\theoremstyle{remark}
\newtheorem{rem}[thm]{Remark}
\DeclareMathOperator*{\aplimsup}{ap\,\limsup}
\DeclareMathOperator*{\apliminf}{ap\,\liminf}
\newcommand{\Ch}{{\mathrm {Ch}}}
\newcommand{\lip}{{\mathrm {lip}}}
\newcommand{\Lip}{{\mathrm {Lip}}}
\newcommand{\diff}{{\mathrm{d}}}
\newcommand{\DIFF}{{\mathrm{D}}}
\newcommand{\nablatilde}{\bar{\nabla}}
\newcommand{\Prbar}{\bar{\Pr}}
\newcommand{\pibar}{\bar{\pi}}
\newcommand{\qcr}{{\mathrm{QCR}}}
\renewcommand{\tr}{{\mathrm{tr}}}
\newcommand{\trbar}{{{\bar{\tr}}}}
\newcommand{\qcrbar}{{\mathrm{\bar{QCR}}}}
\newcommand{\dive}{{\mathrm{div}}}
\newcommand{\heat}{{\mathrm {h}}}
\newcommand{\supp}{{\mathrm {supp\,}}}
\newcommand{\capa}{{\mathrm {Cap}}}
\newcommand{\per}{{\mathrm {Per}}}
\newcommand{\Wass}{{\mathrm {W}}}
\newcommand{\Prob}{{\mathscr{P}}}
\newcommand{\Ptwo}{{\mathscr{P}_2 }}
\newcommand{\mres}{\mathbin{\vrule height 1.6ex depth 0pt width 0.13ex\vrule height 0.13ex depth 0pt width 1.3ex}}
\newcommand{\dist}{{\mathsf{d}}}
\newcommand{\mass}{{\mathsf{m}}}
\newcommand{\XX}{{\mathsf{X}}}
\newcommand{\WW}{{\mathcal{W}}}
\newcommand{\Tan}{\mathrm {Tan}}
\newcommand{\defeq}{\mathrel{\mathop:}=}
\newcommand{\MM}{\mathcal{M}}
\newcommand{\HH}{\mathcal{H}}
\newcommand{\LL}{\mathcal{L}}
\newcommand{\RR}{\mathbb{R}}
\newcommand{\NN}{\mathbb{N}}
\newcommand{\Cqcvf}{\mathcal{QC}(T\XX)}
\newcommand{\Cqcvfinf}{\mathcal{QC}^\infty(T\XX)}
\newcommand{\Ss}{{\mathrm {S}^2}(\XX)}
\newcommand{\Lp}{{\mathrm {L}}}
\newcommand{\Lploc}{\mathrm {L}_\mathrm{loc}}
\newcommand{\Lpo}{\Lp^0(\mass)}
\newcommand{\Lpc}{\Lp^0(\capa)}
\newcommand{\Lpu}{\Lp^1(\mass)}
\newcommand{\Lpuloc}{\Lp^1_{\mathrm{loc}}(\mass)}
\newcommand{\Lpp}{\Lp^p(\mass)}
\newcommand{\Lpt}{\Lp^2(\mass)}
\newcommand{\Lpi}{\Lp^\infty(\mass)}
\newcommand{\WSCs}{\mathrm {W^{1,2}_C}(T\XX)}
\newcommand{\HSs}{{\mathrm {H^{1,2}(\XX)}}}
\newcommand{\WHCSs}{{\mathrm {H^{1,2}_C}(T\XX)}}
\newcommand{\WSHs}{{\mathrm {W^{1,2}_H}(T\XX)}}
\newcommand{\WHHSs}{{\mathrm {H^{1,2}_H}(T\XX)}}
\newcommand{\WHHSsn}{{\mathrm {H^{1,2}_H}(T\XX)^n}}
\newcommand{\HSsloc}{{\mathrm {H^{1,2}_{loc}(\XX)}}}
\newcommand{\LIP}{{\mathrm {LIP}}}
\newcommand{\BV}{{\mathrm {BV}}}
\newcommand{\BVv}{{\mathrm {BV}}(\XX)}
\newcommand{\LIPbs}{{\mathrm {LIP_{bs}}}}
\newcommand{\Cb}{{C_{\mathrm{b}}}}
\newcommand{\LIPloc}{{\mathrm {LIP_{loc}}}}
\newcommand{\LIPb}{{\mathrm {LIP_{b}}}}
\newcommand{\TestF}{{\mathrm {TestF}}}
\newcommand{\TestV}{{\mathrm {TestV}}}
\newcommand{\TestVbar}{{\mathrm {Test\bar{V}}}}
\newcommand{\cotX}{\Lp^2(T^*\XX)}
\newcommand{\tanX}{\Lp^2(T\XX)}
\newcommand{\tanXp}{\Lp^p(T\XX)}
\newcommand{\tanXzero}{\Lp^0(T\XX)}
\newcommand{\cotanXzero}{\Lp^0(T^*\XX)}
\newcommand{\tanXinf}{\Lp^\infty(T\XX)}
\newcommand{\cotanXp}{\Lp^p(T^*\XX)}
\newcommand{\tanbvX}[1]{\Lp^2_{#1}(T\XX)}
\newcommand{\tanbvXp}[2]{\Lp^{#1}_{#2}(T\XX)}
\newcommand{\tanbvXn}[1]{\Lp^2_{#1}(T^n\XX)}
\newcommand{\tanbvXzero}[1]{\Lp^0_{#1}(T\XX)}
\newcommand{\tanXcap}{\Lp^0_\capa(T\XX)}
\newcommand{\tanXcapinfn}{\Lp^\infty_\capa(T^n\XX)}
\newcommand{\tanXcapinfm}{\Lp^\infty_\capa(T^m\XX)}
\newcommand{\RCD}{{\mathrm {RCD}}}
\newcommand{\PI}{{\mathrm {PI}}}
\newcommand{\fr}{\penalty-20\null\hfill$\blacksquare$}
\def\Xint#1{\mathchoice
	{\XXint\displaystyle\textstyle{#1}}%
	{\XXint\textstyle\scriptstyle{#1}}%
	{\XXint\scriptstyle\scriptscriptstyle{#1}}%
	{\XXint\scriptscriptstyle\scriptscriptstyle{#1}}%
	\!\int}
\def\XXint#1#2#3{{\setbox0=\hbox{$#1{#2#3}{\int}$}
		\vcenter{\hbox{$#2#3$}}\kern-.5\wd0}}
\def\dashint{\Xint-}
\begin{document}
	\title[BV calculus on RCD spaces]{Calculus and fine properties of functions \\of bounded variation on RCD spaces}

\author[C. Brena]{Camillo Brena}
\address{C.~Brena: Scuola Normale Superiore, Piazza dei Cavalieri 7, 56126 Pisa} 
\email{\tt camillo.brena@sns.it}

\author[N. Gigli]{Nicola Gigli}
\address{N.~Gigli: SISSA, Via Bonomea 265, 34136 Trieste} 
\email{\tt ngigli@sissa.it}

	\begin{abstract}
We generalize the classical calculus rules satisfied by functions of bounded variation to the framework of  $\RCD$ spaces. In the infinite dimensional setting we are able to define an analogue of the distributional differential and on finite dimensional spaces we prove fine properties and suitable calculus rules, such as the Vol'pert chain rule for vector valued functions. 	\end{abstract}
\maketitle
\tableofcontents
\section*{Introduction} The structure of functions of bounded variation is well understood on Euclidean spaces (e.g.\ \cite{AFP00,GMSCartCurr}) and, starting from the seminal papers  \cite{MIRANDA2003,amb00,amb01} (see e.g.\ \cite{DiM14a} and references therein), it is now clear that a  reasonable theory can be developed for real valued BV functions defined on arbitrary metric measure spaces. In this more abstract framework, the effectiveness of the theory depends, as reasonable to expect, on the kind of assumptions imposed on the underlying space: typical ones made when studying Sobolev/BV functions are a doubling property of the measure and a (weak, local) Poincar\'e inequality (spaces satisfying these are called PI spaces). 

For what concerns the studies done here, we emphasize a key difference between Sobolev and BV calculus: in the latter case the distributional differential (whatever it is) can be concentrated on negligible sets and as such a clear understanding of it seems unavoidably connected to the study of fine properties of BV functions. As a concrete example of the difficulties this might create, notice that for Sobolev functions over arbitrary spaces  the  Leibniz rule
\[
|\DIFF(fg)|\leq |f||\DIFF g|+|g||\DIFF f|\qquad \text{a.e.\ for every } f\in \Lp^\infty(\mass) \cap {\rm W}^{1,p}({\XX})
\]
always holds, while its analogue for BV functions might fail, even on PI spaces (see  \cite[Example 4.13]{lahti2018sharp}). 

\bigskip

Here we are interested in BV calculus on $\RCD$ spaces (\cite{AmbrosioMondinoSavare13-2}, \cite{Gigli12} after  \cite{Sturm06I,Sturm06II}, \cite{Lott-Villani09} -  see also the survey  \cite{AmbICM}), which are metric measure spaces satisfying, in a synthetic sense, a bound from below for the Ricci curvature. The starting point of our analysis are the recent papers   \cite{ambrosio2018rigidity,bru2019rectifiability,bru2021constancy}, where the theory of sets of finite perimeter has been generalized to the setting of finite dimensional $\RCD$ spaces obtaining in particular  a version of De Giorgi's Theorem and a Gauss--Green integration by parts formula. Since characteristic functions of sets of finite perimeter are `the most irregular BV functions', in some sense, such results are a strong indication that BV calculus  resembling the Euclidean one is possible in this setting.

\bigskip

With this said, we also try to obtain suitable calculus even in possibly infinite dimensional $\RCD$ spaces and in this direction a first result we obtain, valid on arbitrary metric measure spaces, is that
\begin{equation}
\label{eq:TVac}
 \abs{\DIFF F}\ll\capa\qquad\text{for every } F\in \BV(\XX),
\end{equation}
where $\abs{\DIFF F}$ is the total variation of $F$ and $\capa$ the 2-capacity. This was previously known only on PI spaces (see \cite[Section 1.1.3]{bru2019rectifiability} and the references therein). Using \eqref{eq:TVac} in conjunction with the techniques in  \cite{bru2019rectifiability} we obtain the general integration by parts formula
\begin{equation}
\label{eq:intp}
\sum_{i=1}^n \int_\XX F_i \dive \,v_i\dd{\mass}=-\int_\XX v\,\cdot\,\nu_F\dd{\abs{\DIFF F}}
\end{equation}
on any $\RCD(K,\infty)$ space and any $F:{\rm X}\to{\mathbb R}^n$ vector valued BV function. Here $\nu_F$ is a suitable vector field of norm 1 (uniquely) defined $\abs{\DIFF F}$-a.e.\ playing the role of $\frac {{\DIFF F}}{\abs{\DIFF F}}$ and $v$ is an arbitrary vector field `sufficiently smooth' (these concepts can be made rigorous via the notions proposed in \cite{Gigli14}, \cite{debin2019quasicontinuous}). We notice that in the scalar case $n=1$ the proof of \eqref{eq:intp} follows verbatim that of the integration by parts proved in \cite{bru2019rectifiability} taking \eqref{eq:TVac} into account. In the vector valued case some care is needed to define the correct notion of BV function, and in particular of `norm of the differential', for the above to work: see Definition \ref{def:defBVv} and notice that it mimics the relaxation of the (integral of the) Hilbert-Schmidt norm of the differential.

Formula \eqref{eq:intp} and the uniqueness of $\nu_F$ suggest to define the distributional differential of $F$ as the product $\nu_F\abs{\DIFF F}$ (see Definition \ref{mvmeas} for the rigorous meaning of this) and it is then natural to  try to understand how does this object behave and whether is satisfies the well-known properties as in Euclidean spaces. It turns out that on finite dimensional $\RCD$ spaces this is the case and we are able to reproduce  some key classical results. They include the  Vol'pert chain rule for vector valued $\BV$ functions under  post-composition with $C^1$ functions (Proposition \ref{volprop}) and the Leibniz rule
\[
	\DIFF (f g) =\bar{f}\DIFF g+\bar{g}\DIFF f,\qquad\text{for every } f,g\in \Lp^\infty(\mass)\cap \BV({\rm X}),
\]
that in particular implies  the previously unknown bound $\abs{\DIFF (f g)}\le \abs{\bar{f}}\abs{\DIFF g}+ \abs{\bar{g}}\abs{\DIFF f}$ (Proposition \ref{leibnizprop}). Here $\bar f,\bar g$ are the precise representatives of $f,g$, see \eqref{precrep}. For what concerns the Vol'pert chain rule, we notice that the usual proof via blow-up procedure does not seem to work, so we have to argue via a series of intermediate results: we first prove, via an integration by Cavalieri's formula, a chain rule for scalar valued $\BV$ functions, then use it to obtain the Leibniz rule, this in turn gives the chain rule for post-composition with polynomials and finally we argue by approximation.

We emphasize that  the necessity of requiring finite dimensionality of the space is mostly related to the seemingly unavoidable need of representing the total variation of $F$ on the jump part via the codimension one spherical Hausdorff measure and use the results in \cite{ambrosio2018rigidity,bru2019rectifiability,bru2021constancy}.
 
\bigskip

There are several open questions left open by our discussion, including: the validity of the general chain rule as in \cite{AmbDM},  of Alberti's rank one property as in \cite{alberti_1993} and that of better understanding weak objects like the distributional differential of BV functions and their relation with, e.g., distributional differentials of Sobolev functions. These topics are going to be addressed in upcoming papers.

\section*{Acknowledgments}
This work originates from  the Master thesis of the first author done while he was student at SISSA.

We wish to thank Daniele Semola for stimulating conversations we had with him while working at this project.
\section{Preliminaries}
\subsection{Metric measure spaces}
In this note, we consider only complete and separable metric spaces.
A metric measure space is a triplet $(\XX,\dist,\mass)$ where $\XX$ is a set, $\dist$ is a (complete and separable) distance on $\XX$ and $\mass$ is a non negative Borel measure that is finite on balls.  
We adopt the convention that metric measure spaces have full topological support, that is to say that for any $x\in\XX$ and $r>0$, we have $\mass(B_r(x))>0$. Also, to avoid pathological situations, we assume that metric measure spaces are not single points. 

We denote the Borel $\sigma$-algebra of $\XX$ by $\mathcal{B}(\XX)$. For $B$ subset of $\XX$ and $A$ open subset of $\XX$, we write $B\Subset A$ if $B$ is a bounded subset of $A$ with $\dist (B,\XX\setminus A)>0$. Clearly, if the space is proper (i.e.\ bounded sets are relatively compact), $B\Subset A$ if and only if $\bar{B}$ is a compact subset of $A$.

Given $A\subseteq\XX$ open, we denote with $\LIPloc(A)$ the space of Borel functions that are Lipschitz in a neighbourhood of $x$, for any $x\in A$. If the space is locally compact, $\LIPloc(A)$ coincides with the space of functions that are Lipschitz on compact subsets of $A$. We adopt the usual notation for the various Lebesgue spaces. The subscript $\mathrm{bs}$ (e.g.\ $\LIPbs(\XX)$) is used to denote the subspace of functions with bounded support.

A pointed metric measure space is a quadruplet $(\XX,\dist,\mass,x)$ where $(\XX,\dist,\mass)$ is a metric measure space and $x\in\XX$. We consider two pointed metric measure spaces $(\XX',\dist',\mass',x')$ and $(\XX'',\dist'',\mass'',x'')$ to be isomorphic if there exists an isometry
$\Psi:\XX'\rightarrow\XX''$ such that $\Psi(x')=x''$ and $\Psi_*\mass'=\mass''$, where the notation $f_\ast\nu$ denotes the push-forward of the measure $\nu$ through the measurable map $f$.

\bigskip
The Cheeger energy (see \cite{Cheeger00,Shanmugalingam00,AmbrosioGigliSavare11,AmbrosioGigliSavare11-3}) associated to a metric measure space $(\XX,\dist,\mass)$ is the convex and lower semicontinuous functional defined on $\Lpt$ as

\begin{equation}\label{defch}
\Ch(f)\defeq\frac{1}{2}\inf\left\{\liminf_k \int_\XX\lip(f_k)^2\dd{\mass}:\{f_k\}_k\subseteq\LIPb(\XX)\cap\Lpt, f_k\rightarrow f \text{ in }\Lpt\right\}
\end{equation}
where $\lip(f)$ is the so called local Lipschitz constant
$$ \lip(f)(x)\defeq \limsup_{y\rightarrow x}\frac{\abs{f(y)-f(x)}}{\dist(y,x)},$$
which has to be understood to be $0$ if $x$ is an isolated point.
The finiteness domain of the Cheeger energy is denoted by $\HSs$ and is endowed with the complete norm $\Vert f\Vert^2_{\HSs}\defeq\Vert f\Vert_{\Lpt}^2+2 \Ch(f)$. 
It is possible to identify a canonical object $\abs{\nabla f}\in\Lpt$, called minimal relaxed slope, providing the integral representation $$ \Ch(f)=\frac{1}{2}\int_\XX\abs{\nabla f}^2\dd{\mass}\quad\text{for every }f\in\HSs.$$
Any metric measure space on which $\Ch$ is a quadratic form is said to be infinitesimally Hilbertian (\cite{Gigli12}). Under this assumption, (see \cite{Ambrosio_2014,Gigli12}) it is possible to define a symmetric bilinear form
$$ \HSs\times\HSs\ni(g,f)\rightarrow \nabla f\,\cdot\, \nabla g\in\Lpu$$
such that $$ \nabla f\,\cdot\, \nabla f=\abs{\nabla f}^2\quad\mass\text{-a.e.}\text{ for every }f\in\HSs.$$
We denote with $\HSsloc$ the space of functions $f:\XX\rightarrow\RR$ such that for every bounded Borel set $B$, there exists a function $f_B\in\HSs$ such that $f=f_B\ \mass$-a.e.\ on $B$, and we define $\abs{\nabla f}$ exploiting locality. We define $\mathrm{S}^2(\XX)$ as the space of functions $f$ such that for every $n$ $(f\wedge n)\vee -n\in\HSsloc$ and $\abs{\nabla f}\in\Lpt$, where $\abs{\nabla f}$ is (well) defined by $\abs{\nabla ((f\wedge n)\vee -n)}$ $\mass$-a.e.\ on $\{f\in (-n,n)\}$.

On infinitesimally Hilbertian metric measure spaces it is possible to define a linear Laplacian operator $\Delta:D(\Delta)\subseteq\HSs\rightarrow\Lpt$ in the following way:
we let $D(\Delta)$ to be the set of those $f\in\HSs$ such that, for some $h\in\Lpt$, one has 
$$\int_\XX\nabla f\,\cdot\, \nabla g \dd{\mass}= -\int_\XX h g\dd{\mass}\quad\text{for every } g\in\HSs,$$
and, if this is the case, we put $\Delta f=h$, which is uniquely determined by the equation above.

We can define the heat flow $\heat_t$ as the $\Lp^2$ gradient flow of $\Ch$, whose existence and uniqueness follow from the Komura-Brezis theory.  On infinitesimally Hilbertian spaces we can characterize the heat flow by saying that for any $u\in\Lpt$, the curve $[0,\infty)\ni t\mapsto\heat_t u\in\Lpt$ is continuous in $[0,\infty)$, locally absolutely continuous in $(0,\infty)$ and satisfies 
$$\begin{cases}
 \dv{t} \heat_t u=-\Delta \heat_t u\quad\text{for every }t\in(0,\infty),\\
 \heat_0 u=u,
\end{cases}$$ 
where we implicitly state that if $t>0$, $\heat_t u\in D(\Delta)$. 
It is possible to prove that on infinitesimally Hilbertian spaces, the heat flow provides a linear, continuous and self-adjoint contraction semigroup in $\Lpt$, which extends to a linear and continuous contraction semigroup, that we still denote with $\heat_t$, in all spaces $\Lpp$, for $p\in[1,\infty)$. 
We define $\heat_t$ on $\Lpi$ in duality with $\Lpu$,
i.e.\ if $f\in\Lpi$,
$$
\int_\XX g\heat_t f \dd{\mass}=\int_\XX f \heat_t g\dd{\mass}\quad\text{for every }g\in\Lpu,
$$
and, with this extension, $\heat_t$ turns out to be a linear and continuous contraction semigroup in all spaces $\Lpp$ with $p\in [1,\infty]$.

\subsection{RCD spaces}\label{RCDsubsect}
The main setting for our investigation is the one of $\RCD(K,N)$ metric measure spaces (for $K\in\RR$ and $N\in [1,\infty]$), which are infinitesimally Hilbertian spaces (\cite{Gigli12}) satisfying a lower Ricci curvature bound and an upper dimension bound (meaningful if $N<\infty$) in synthetic sense according to \cite{Sturm06I,Sturm06II,Lott-Villani09}. General references on this topic are \cite{AmbrosioGigliMondinoRajala12,AmbrosioGigliSavare11,Ambrosio_2014,AmbrosioGigliSavare12,Gigli17,Gigli14,GP19} and we assume the reader to be familiar with this material.

In the last part of this note the focus be only on finite dimensional $\RCD$ spaces, so that in the sequel when we write $\RCD(K,N)$ we will assume $1\le N<\infty$.
Recall that $\RCD(K,N)$ spaces are locally uniformly doubling, i.e.\ for every $R>0$ there exists $C_D=C_D(R)>0$ such that 
\begin{equation}\notag
\mass(B_{2 r}(x))\le C_D\mass(B_r(x))\quad \text{for every }x\in\XX\text{ and } 0<r<R
\end{equation} 
and support a weak local $(1,1)$-Poincaré inequality, i.e.\ there exists $\lambda\ge 1$ and for every $R>0$, there exists $C_P=C_P(R)>0$ such that, for every $f\in\LIP(\XX)$,
\begin{equation}\label{poincare1}
	\dashint_{B_r(x)}{\abs{f-(f)_{x,r}}\dd{\mass}}\le C_P r \dashint_{B_{\lambda r}(x)} \lip(f)\dd{\mass}\quad \text{for every }x\in\XX\text{ and }\ 0<r<R.
\end{equation}
Hajłasz and Koskela proved in \cite[Theorem 5.1]{hajkosk} that the Poincaré inequality improves to the following form (see also \cite{Cheeger00} for what concerns this formulation): there exists $\lambda\ge 1$ and for every $R>0$, there exist $C'_P=C'_P(R)>0$ and $Q=Q(R)>1$ such that, for every $f\in\LIP(\XX)$,
\begin{equation}\label{poincare2}
\left(\dashint_{B_r(x)}{\abs{f-(f)_{x,r}}}^{\frac{Q}{Q-1}}\dd{\mass}\right)^{\frac{Q-1}{Q}}\le C'_P r \dashint_{B_{2 \lambda r}(x)} \lip(f)\dd{\mass}\quad \text{for every }x\in\XX\text{ and } 0<r<R.
\end{equation}
Recall that locally uniformly doubling spaces are proper. We call locally uniformly doubling spaces supporting a weak local $(1,1)$-Poincaré inequality $\PI$ spaces. 
We can, and will, assume that $R\mapsto C_D(R),R\mapsto C_P(R),R\mapsto C'_P(R)$ and $R\mapsto Q(R)$ are non decreasing functions. 

Following \cite{Gigli14,Savare13} (with the additional request of a $\Lp^\infty$ bound on the Laplacian), we define the vector space of test functions on an $\RCD(K,\infty)$ space as
	\begin{equation}\notag
		\TestF(\XX)\defeq\{f\in\LIP(\RR)\cap\Lpi\cap D(\Delta): 	\Delta f\in \HSs\cap\Lpi\},
	\end{equation}
and the vector space of test vector fields as
	\begin{equation}\label{testVdef}
		\TestV(\XX)\defeq\left\{ \sum_{i=1}^n f_i\nabla g_i : f_i\in\Ss\cap\Lpi,g_i\in\TestF(\XX)\right\}.
	\end{equation}
To be precise, the original definition of $\TestV(\XX)$ given by the second author was slightly different. However, when using test vector fields to define regular subsets of vector fields such as $\WHHSs$ and $\WHCSs$, the two definitions produce the same subspaces, as one may readily check inspecting the proofs of Lemma \ref{calculushodge} and Lemma \ref{calculuscova}  below.

It is possible to see that $\TestF(\XX)\subseteq\HSs$ is dense. Also, if $f\in\HSs\cap\Lpi$, we can find a sequence $\{f_n\}_n\subseteq\TestF(\XX)$ with $f_n\rightarrow f$ in $\HSs$ and $\Vert f_n\Vert_{\Lpi}\le \Vert f\Vert_{\Lpi}$.
Using \cite[Theorem 6.1.11]{GP19} (extracted from \cite{Savare13}), one proves that $	\TestF(\XX)$ is an algebra. Clearly, if $f\in\Ss\cap\Lpi\supseteq\TestF(\XX)$ and $v\in\TestV(\XX)$, then $f v \in\TestV(\XX)$. 
\bigskip

On $\RCD(K,\infty)$ spaces, (\cite{AmbrosioGigliMondinoRajala12,Ambrosio_2014,AmbrosioGigliSavare11,Gigli10}), we can define the heat flow on Borel probability measures with finite second moment (we still denote it, with a slight abuse, by $\heat_t$) as the $\mathrm{EVI}_K$ gradient flow of the entropy, and it turns out that the existence of this gradient flow for any initial datum can be used to characterize $\RCD(K,\infty)$ spaces among length spaces with a growth condition on the reference measure. It is possible to show that, given $\mu\in\Ptwo(\XX)$ and $t>0$, $\heat_t \mu$ is the unique measure in $\Ptwo(\XX)$ that satisfies
	\begin{equation}\label{heatmeassposta}
		\int_\XX g\dd{\heat_t \mu}=\int_\XX \heat_t g\dd{\mu}\quad \text{ for every }g\in\LIPbs(\XX),
	\end{equation}
where we took the Lipschitz representative for $\heat_t g$ in the integral above thanks to the $\Lp^\infty$-$\LIP$ regularization property of the heat flow on $\RCD(K,\infty)$ spaces. The heat flow of measures is $K$-contractive with respect to the Wasserstein $\Wass_2$ distance and, for $t > 0$, maps probability measures into probability measures which are absolutely continuous with respect to $\mass$ (the latter assertion is an immediate consequence of fact that $t\mapsto\heat_t\mu$ is the gradient flow of the entropy). Then, for any $t > 0$, it is possible to define the heat kernel $p_t : \XX \times \XX \rightarrow [0,\infty)$ by
$$p_t(x,\,\cdot\,)\defeq\dv{\heat_t\delta_x}{\mass}.$$
As $\RCD(K,N)$ spaces are $\PI$ spaces, the theory developed in \cite{Sturm96II,Sturm96III} implies the existence of a locally Hölder continuous representative for the heat kernel $$(0,\infty)\times\XX\times\XX\ni(t,x,y)\mapsto p_t(x,y)\in\RR.$$ In \cite{jiang2014heat} it has been proved that, for any $\epsilon>0$, there exist positive constants $C_1=C_1(\epsilon,K,N)>0$ and $C_2=C_2(\epsilon,K,N)>0$ such that for every $t>0$, $x,y\in\XX$, the following estimate holds
\begin{equation}\label{heatkerneleq}
		\frac{1}{C_1\mass(B_{\sqrt{t}}(y))}\exp{-\frac{\dist(x,y)^2}{(4-\epsilon)t}-C_2 t}\le p_t(x,y)\le \frac{C_1}{\mass(B_{\sqrt{t}}(y))}\exp{-\frac{\dist(x,y)^2}{(4+\epsilon)t}+C_2 t}.
\end{equation}
Then, if $\mu$ is a finite (non negative) Borel measure on $\XX$, we can define 
\begin{equation}\notag
	\heat_t \mu\defeq\left( \int_\XX p_t(\,\cdot\,,y)\dd\mu{(y)}\right)\mass
\end{equation}
and Fubini's Theorem implies that \eqref{heatmeassposta} still holds, so that this definition is coherent with the previous one. Notice that still $\heat_t\mu\ll\mass$ for every finite (non negative) Borel measure $\mu$.

On an $\RCD(K,\infty)$ space $(\XX,\dist,\mass)$, following \cite{Gigli14}, one can consider $\heat_{\mathrm{H},t}$, the gradient flow relative to the augmented Hodge energy functional in $\tanX$. This means that for every $v\in\tanX$ the curve $t\mapsto\heat_{\mathrm{H},t}v\in\tanX$ is the unique curve that is continuous in $[0,\infty)$, locally absolutely continuous in $(0,\infty)$ and satisfies  $$ \begin{cases}
 	\dv{t}\heat_{\mathrm{H},t}v=-\Delta_\mathrm{H}\heat_{\mathrm{H},t}(v)\quad\text{for every }t\in(0,\infty),\\
 	\heat_{\mathrm{H},0}v=v,
 \end{cases}$$
where we implicitly state that if $t>0$, $\heat_{\mathrm{H},t}v\in D(\Delta_\mathrm{H})\subseteq\WHCSs$.
 
In \cite{Gigli14} and \cite[Section 1.4]{bru2019rectifiability} are proved several properties of the heat flow $\heat_{\mathrm{H},t}$, we recall here some of them. 
The first is the pointwise estimate for $v\in\tanX$
$$
\abs{\heat_{\mathrm{H},t}v}^2\le e^{-2 K t} \heat_t (\abs{v}^2)\quad\mass\text{-a.e.\ for every }t\ge 0.
$$
Then we recall that $\heat_{\mathrm{H},t}$ is self-adjoint, meaning that for every $v,w\in\tanX$,
$$ \int_\XX \heat_{\mathrm{H},t} v\,\cdot\, w\dd{\mass}= \int_\XX v\,\cdot\,  \heat_{\mathrm{H},t}w\dd{\mass}\quad\text{for every }t\ge 0.$$
Also, we recall the commutation, for $v\in D(\dive)$,
$$ \dive(\heat_{\mathrm{H},t} v)=\heat_t( \dive\, v)\quad\mass\text{-a.e.\ for every }t\ge 0,$$
where we recall $\heat_{\mathrm{H},t} v\in D(\Delta_{\mathrm{H}})\subseteq D(\dive)$.
Finally we state that if $f\in\HSs$, then
$$ \heat_{\mathrm{H},t}(\nabla f)=\nabla\heat_t f\quad\text{for every }t\ge 0.$$
\bigskip

We recall now the definition of tangent cone to an $\RCD(K,N)$ space, using the notion of pointed measured Gromov-Hausdorff convergence, defined first in \cite{Gromov07} (see also \cite{Sturm06I, GMS15}). 
First, given a pointed metric measure space $(\XX,\dist,\mass,x)$ and $r\in(0,1)$ we define the rescaled space $(\XX,r^{-1}\dist,\mass^x_r,x)$ where $$\mass^x_r\defeq\left( \int_{B_r(x)}\big(1-r^{-1}\dist(x,z)\big)\dd{\mass(z)}\right)^{-1}\mass.$$
The transformation from $\mass$ to $\mass^x_r$ is performed in order to have the space normalized, i.e.\ 
$$ \int_{B_1^{r^{-1}\dist}(x)}\big(1-r^{-1}\dist(x,z)\big)\dd{\mass_r^x(z)}=1.$$
As a notation, we set $$\tilde{\LL}^k\defeq({\LL}^k)^0_1={\LL}^k\frac{k+1}{\omega_k},$$ where $\LL^k$ denotes the $k$ dimensional Lebesgue measure and
\begin{equation}\label{omegakdef}
	\omega_k\defeq \LL^k(B_1^{\RR^k}(0)).
\end{equation}
\begin{defn}
Let $(\XX,\dist,\mass)$ be an $\RCD(K,N)$ space and $x\in\XX$. We say that a pointed metric measure space $(\XX',\dist',\mass',x')$ is tangent to $(\XX,\dist,\mass)$ at $x$ if there exists a sequence of radii $\{r_j\}_j$, $r_j\searrow 0$ such that $(\XX,r_j^{-1}\dist,\mass_{r_j}^x,x)\rightarrow(\XX',\dist',\mass',x')$ in the pointed measured Gromov-Hausdorff topology.
We denote the collection of all tangent spaces to $(\XX,\dist,\mass)$ at $x$ as $\Tan_{x}(\XX,\dist,\mass)$.
\end{defn}
If $(\XX,\dist,\mass)$ is an $\RCD(K,N)$ space, Gromov compactness theorem shows that for every $x\in\XX$, $\Tan_{x}(\XX,\dist,\mass)$ is non empty. Moreover, by the stability and rescaling property of the $\RCD(K,N)$ condition, we see that elements of $\Tan_{x}(\XX,\dist,\mass)$ are $\RCD(0,N)$ spaces.

The known results of structure theory for $\RCD(K,N)$ spaces can be summed up in the following theorem, which, in particular, states that $\RCD(K,N)$ spaces are rectifiable as metric measure spaces (see \cite{BruPasSem20,bru2018constancy,Gigli_2015,KelMon16,Mondino-Naber14,GP16-2}):
\begin{thm}
	Let $(\XX,\dist,\mass)$ be an $\RCD(K,N)$ space. Then there exists a unique $n\in\NN$, called the essential dimension of $\XX$, with $1\le n\le N$, such that: 
	\begin{enumerate}[label=\roman*)]
		\item for $\mass$-a.e.\ $x\in \XX$,
		\begin{equation}\label{denfrn}
			\Tan_x (\XX,\dist,\mass)=\left\{(\RR^{n},\dist_e,\tilde{\LL}^{n},0)\right\},
		\end{equation}
		and we call the collection of points $x\in\XX$ satisfying \eqref{denfrn} above $\mathcal{R}_n$.
		\item $(\XX,\dist,\mass)$ is countably $n$-rectifiable. More precisely, given any $\epsilon>0$, we can cover $(\XX,\dist,\mass)$ up to negligible subset by a countable union of subsets that are $(1+\epsilon)$-bilipschitz equivalent to measurable subsets of $\RR^n$.
		\item There exists a non negative density $\theta\in\Lploc^1(\XX,\HH^n\mres\mathcal{R}_n)$ such that
		\begin{equation}\notag
			\mass=\theta\HH^n\mres\mathcal{R}_n.
		\end{equation}
	\end{enumerate}
\end{thm}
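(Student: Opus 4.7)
The plan is to argue in four conceptual stages, each of which is at the heart of one of the cited papers.

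First I would establish a pointwise dichotomy for tangent spaces via Mondino--Naber. On any $\RCD(K,N)$ space one constructs, around almost every point $x$, harmonic $\delta$-splitting maps $u=(u_1,\dots,u_k):B_r(x)\to\RR^k$ with small Hessian and almost-orthonormal gradients. Using the almost-splitting theorem (the quantitative counterpart of Cheeger--Colding's splitting, now available in the $\RCD$ category), together with the stability and rescaling of the $\RCD(K,N)$ condition, one shows that for $\mass$-a.e.\ $x$ every element of $\Tan_x(\XX,\dist,\mass)$ splits off a factor $\RR^{k(x)}$, with $k(x)\in\{1,\dots,\lfloor N\rfloor\}$, and the cross-section has no further lines; Bishop--Gromov plus volume rigidity then forces this cross-section to be trivial, so the tangent is exactly $(\RR^{k(x)},\dist_e,c\,\LL^{k(x)},0)$ for some constant $c$. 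The normalization built into the rescaled measure $\mass^x_r$ fixes $c$ and gives the tangent of the form $(\RR^{k(x)},\dist_e,\tilde\LL^{k(x)},0)$ claimed in \eqref{denfrn}.

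Second I would invoke the constancy-of-dimension result of Bruè--Semola to show that $k(x)$ is $\mass$-a.e.\ constant, equal to a single integer $n$, the \emph{essential dimension}. This is the deepest ingredient and I expect it to be the main obstacle if one tried to reprove it from scratch: the proof proceeds by analyzing harmonic $\delta$-splitting maps on overlapping scales, propagating the local dimensional information via the heat flow $\heat_t$ and the commutation $\heat_{\mathrm H,t}\nabla f=\nabla\heat_t f$, and using a clever connectedness-of-regular-strata argument combined with a quantitative rigidity statement to exclude a jump in the dimension of the Euclidean tangents on any set of positive measure.

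Third, to upgrade the pointwise tangent information to a rectifiable structure on $\mathcal R_n$, I would follow Mondino--Naber, Kell--Mondino and Gigli--Pasqualetto. Fix $\epsilon>0$; on a small ball around a point of $\mathcal R_n$, an $n$-dimensional harmonic $\delta$-splitting map $u$ is, after removing a small-measure exceptional set, a $(1+\epsilon)$-biLipschitz equivalence with a measurable subset of $\RR^n$ (this uses the PI structure recalled in \eqref{poincare1}--\eqref{poincare2}, Cheeger differentiation, and the rigidity coming from the Euclidean tangent). Covering $\mathcal R_n$ by countably many such neighborhoods and choosing $\epsilon$ arbitrarily small yields assertion (ii).

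Finally, to obtain the representation $\mass=\theta\,\HH^n\mres\mathcal R_n$ in (iii) I would combine the biLipschitz chart structure of (ii), for which $\HH^n\mres\mathcal R_n$ is $\sigma$-finite, with the density of the Euclidean tangents coming from (i). The normalization of $\mass^x_r$ together with standard differentiation of measures in PI spaces (say, along with the heat-kernel bounds \eqref{heatkerneleq}) shows that the limit
\begin{equation*}
\theta(x)\defeq\lim_{r\downarrow 0}\frac{\mass(B_r(x))}{\omega_n r^n}
\end{equation*}
exists, is positive and finite for $\mass$-a.e.\ $x\in\mathcal R_n$, and gives the Radon--Nikodym density of $\mass$ with respect to $\HH^n\mres\mathcal R_n$; local integrability follows from local doubling. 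Uniqueness of $n$ in (i)--(iii) is automatic once constancy of dimension is in hand.
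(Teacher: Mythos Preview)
The paper does not give its own proof of this theorem: it is stated as a summary of the existing structure theory for $\RCD(K,N)$ spaces, with the proof delegated entirely to the references \cite{BruPasSem20,bru2018constancy,Gigli_2015,KelMon16,Mondino-Naber14,GP16-2}. Your proposal is an accurate high-level summary of what those references contain---Mondino--Naber for the a.e.\ Euclidean tangents and the biLipschitz charts, Bru\`e--Semola for the constancy of the dimension, and the combination of Kell--Mondino, Gigli--Pasqualetto and De~Philippis--Marchese--Rindler style arguments (as refined in the cited works) for the absolute continuity $\mass\ll\HH^n$ and the density representation---so there is no discrepancy to report.
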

If $(\XX,\dist,\mass)$ is an $\RCD(K,N)$ space of essential dimension $n$, it holds that for every $x\in\XX$, 
\begin{equation}\notag
(\RR^k,\dist_e,\tilde{\LL}^k,0)\notin\Tan_{x}(\XX,\dist,\mass)\quad\text{ if $k>n$}.
\end{equation}

\subsection{Normed modules}
We assume that the reader is familiar with the notion of normed module, introduced in \cite{Gigli14}, inspired by the theory developed in \cite{Weaver01}. Also, we assume familiarity with the definition of capacitary modules, quasi-continuous functions and vector fields and related material in \cite{debin2019quasicontinuous}. A summary of the material we use can be found in \cite[Section 1.3]{bru2019rectifiability}. For the reader's convenience, we write the results that we will use most frequently.

It is possible to prove that there exists a unique couple $(\cotX,\diff)$ where $\cotX$ is a $\Lp^2$-normed $\Lp^\infty$-module and $\diff:\HSs\rightarrow\cotX$ is linear and such that 
\begin{enumerate}[label=\roman*)]
\item $\abs{\diff f}=\abs{\nabla f}\ \mass$-a.e.\ for every $f\in\HSs$,
\item $\cotX$ is generated (in the sense of modules) by $\left\{\diff f:f\in\HSs\right\}$.
\end{enumerate}
We define the tangent module $\tanX$ as the dual (in the sense of modules) of $\cotX$.
We define $\cotanXzero$ as the $\Lp^0$-completion of the cotangent module $\cotX$ and also (this definition coincides with the previous one if $p=2$)
\begin{equation}\notag
 \cotanXp\defeq\left\{v\in\cotanXzero:\abs{v}\in\Lpp\right\}\quad\text{for $p\in[1,\infty]$}.
\end{equation}
Similarly, we define $\tanXzero$ as the $\Lp^0$-completion of $\tanX$ and
$$  \tanXp\defeq\left\{v\in\tanXzero:\abs{v}\in\Lpp\right\}\quad\text{for $p\in[1,\infty]$}.$$
We also remark that our definition of the tangent and cotangent modules is, in general, different from the one given in \cite{buffa2020bv} for $p\ne 2$. If the space is infinitesimally Hilbertian, it turns out that $\cotX$ is a Hilbert module so that we can, and will, identify $\cotX$ with its dual $\tanX$, via a map that sends $\diff f$ to $\nabla f$ (the latter vector field being given by Riesz Theorem).

\begin{defn}\label{divedefn}
Let $p\in\{2,\infty\}$. For $v\in\mathrm{L}^p(T\XX)$ we say that $v\in D(\dive^p)$ if there exists a function $g \in\Lpp$ such that 
\begin{equation}\label{divedefneq}
\int_\XX \diff f(v)\dd{\mass}=-\int_\XX  f g \dd{\mass}\quad\text{for every }f\in\HSs\text{ with bounded support},
\end{equation}
and such $g$, which is uniquely determined, is denoted by $\dive\, v$.
\end{defn}
Notice that if $v\in D(\dive^2)\cap D(\dive^\infty)$, then the the two objects $\dive\,v$ as above coincide, in particular, $\dive\,v\in\Lpt\cap\Lpi$.
From \eqref{divedefneq} it follows that $\supp(\dive\,v)\subseteq\supp v$ and also notice that, if the space is infinitesimally Hilbertian and $p=2$ (then $\LIPbs(\XX)\subseteq\HSs$ is dense, as a consequence of the result in \cite[Section 8.3]{AmbrosioGigliSavare11-3} or \cite{ACM14}), \eqref{divedefneq} reads
\begin{equation}\notag
	\int_\XX \nabla f\,\cdot\,v\dd{\mass}=\int_\XX  f g \dd{\mass}\quad\text{for every }f\in\LIPbs(\XX).
\end{equation}
Also, the classical calculus rule holds: if $v\in D(\dive^\infty)$ and $f\in\LIPb(\XX)$, then $f v\in D(\dive^\infty)$ and
\begin{equation}\label{calcdiveeq}
\dive(f v)=\diff f(v)+f\dive\, v.
\end{equation}  This follows from \eqref{divedefneq} and the fact that if $g\in\HSs$ has bounded support and $f\in\LIPb(\XX)$, then $f g\in\HSs$ has bounded support and satisfies $\diff(f g)=f\diff g+g\diff f$.
In the case $p=2$, again from the algebra properties of bounded Sobolev functions together with an easy approximation argument, we have that if $v\in D(\dive^2)\cap\tanXinf$ and $f\in\Ss\cap\Lpi$, then $f v\in D(\dive^2)$ and the calculus rule above holds. In the case $p=2$, we often omit to write the superscript $2$ for what concerns the divergence. 
For future reference we recall that, in the particular case of an infinitesimally Hilbertian space and $p=2$, we can write the calculus rule above as follows.
\begin{lem}\label{calculusdive}
	Let $(\XX,\dist,\mass)$ be an infinitesimally Hilbertian space, $v\in D(\dive)\cap\tanXinf$ and $f\in \mathrm{S}^2(\XX)\cap\Lpi$. Then $f v\in D(\dive)$ and 
	$$\dive (f v)=\nabla f\,\cdot\,v+f\dive\,v.$$
\end{lem}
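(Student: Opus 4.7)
The plan is to verify the defining identity of the divergence (Definition \ref{divedefn}) directly, by pairing the candidate field $fv$ against an arbitrary test function $g\in\HSs$ with bounded support and moving the $f$ across via the Sobolev Leibniz rule.

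First I would check the ambient regularity. Since $f\in\Lpi$ and $v\in\tanXinf$, the product $fv$ lies in $\tanXinf\subseteq\tanX$, with $|fv|\le\|f\|_{\Lpi}|v|\in\Lpt$. The candidate divergence
\[
h\defeq \nabla f\,\cdot\,v+f\dive\,v
\]
belongs to $\Lpt$: indeed $|\nabla f|\in\Lpt$ and $v\in\tanXinf$ give $\nabla f\,\cdot\,v\in\Lpt$, while $f\in\Lpi$ and $\dive\,v\in\Lpt$ give $f\dive\,v\in\Lpt$.

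Next, fix $g\in\HSs$ with bounded support $K$. I want $fg\in\HSs$ with Leibniz rule $\nabla(fg)=f\nabla g+g\nabla f$ holding $\mass$-a.e. Since $f\in \Ss\cap\Lpi$ and $\|f\|_{\Lpi}<\infty$ means $f=(f\wedge n)\vee-n$ for $n\ge\|f\|_{\Lpi}$, the definition of $\Ss$ gives $f\in\HSsloc\cap\Lpi$. Therefore I can find $f_K\in\HSs\cap\Lpi$ with $f_K=f$ $\mass$-a.e.\ on a neighbourhood of $K$ and $\|f_K\|_{\Lpi}\le\|f\|_{\Lpi}$ (truncate a local representative of $f$ by a Lipschitz cutoff equal to $1$ near $K$). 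By the standard algebra property of $\HSs\cap\Lpi$, the product $f_Kg$ lies in $\HSs\cap\Lpi$ with Leibniz rule $\nabla(f_Kg)=f_K\nabla g+g\nabla f_K$. Since $g$ vanishes outside $K$, we have $fg=f_Kg$ everywhere, so $fg\in\HSs$; by locality of the gradient, $\nabla(fg)=f\nabla g+g\nabla f$ $\mass$-a.e.

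Finally, apply the definition of $\dive\,v$ to the bounded-support function $fg\in\HSs$:
\[
\int_\XX \nabla(fg)\,\cdot\,v\dd{\mass}=-\int_\XX fg\dive\,v\dd{\mass}.
\]
Substituting the Leibniz expansion on the left and rearranging,
\[
\int_\XX \nabla g\,\cdot\,(fv)\dd{\mass}=\int_\XX f\nabla g\,\cdot\,v\dd{\mass}=-\int_\XX g\bigl(\nabla f\,\cdot\,v+f\dive\,v\bigr)\dd{\mass}=-\int_\XX g\,h\dd{\mass}.
\]
As $g\in\HSs$ with bounded support was arbitrary, this is exactly the defining property of $\dive(fv)=h$, so $fv\in D(\dive)$ with $\dive(fv)=\nabla f\,\cdot\,v+f\dive\,v$.

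The only mild obstacle is the cutoff argument ensuring $fg\in\HSs$ globally and justifying the Leibniz rule: this is where one uses that $f\in\Ss\cap\Lpi$ reduces to an $\HSs\cap\Lpi$ representative on any neighbourhood of the support of $g$. Once this algebraic point is settled, the lemma is an immediate consequence of testing the divergence identity against $fg$.
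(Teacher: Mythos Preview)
Your approach is exactly the one the paper sketches in the paragraph preceding the lemma: test the divergence identity against $fg$ and use the Sobolev Leibniz rule. There is one small slip: you invoke ``the standard algebra property of $\HSs\cap\Lpi$'' to conclude $f_Kg\in\HSs\cap\Lpi$, but your test function $g\in\HSs$ with bounded support need not be bounded, so the algebra property does not apply as stated (the term $g\nabla f_K$ is a priori only in $\Lpu$). The fix is immediate and is precisely the ``easy approximation argument'' the paper alludes to: either restrict to $g\in\LIPbs(\XX)$, which is dense in $\HSs$ on infinitesimally Hilbertian spaces (as the paper recalls just above Definition~\ref{divedefn}), or truncate $g$ and pass to the limit. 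With that adjustment your proof is complete and matches the paper's.
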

%
%


\subsection{Functions of bounded variation}
We assume that the reader is familiar with the theory of functions of bounded variation and sets of (locally) finite perimeter in metric measure spaces developed in \cite{amb00,amb01,MIRANDA2003} and in the more recent \cite{ambrosio2018rigidity,bru2019rectifiability} for what concerns the $\RCD(K,N)$ setting. 
We recall now the main notions. 

Fix a metric measure space $(\XX,\dist,\mass)$. 
Given $f\in\Lpuloc$, we define, for any $A\subseteq\XX$ open,
\begin{equation}\label{deftv}
	\abs{\DIFF f}(A)\defeq\inf\left\{\liminf_k \int_\XX\lip(f_k)\dd{\mass} :\{f_k\}_k\subseteq\LIPloc(A), f_k\rightarrow f \text{ in }\Lploc^1(A,\mass)\right\}
\end{equation}
($f_k\rightarrow f \text{ in }\Lploc^1(A,\mass)$ if for every $x\in A$ there exists a neighbourhood $U=U_x$ such that $f_k\rightarrow f$ in $\Lp^1(U,\mass)$).
We say that $f$ is a function of bounded variation, i.e.\ $f\in\BVv$, if $f\in\Lpu$ and $\abs{\DIFF f}(\XX)<\infty$. In this case it is easy to show that in \eqref{deftv} $\Lp^1$ convergence can be equivalently taken instead of $\Lploc^1$ convergence. 
We also remark that if $f\in\BV(\XX)$ and $\{f_k\}_k\subseteq\LIPloc(\XX)\cap\Lpu$ is an optimal sequence for the computation of $\abs{\DIFF f}(\XX)$ as in \eqref{deftv}, i.e.\ $f_k\rightarrow f$ in $\Lpu$ and $(\lip(f_k)\mass)(\XX)\rightarrow\abs{\DIFF f}(\XX)$ (by the results in  \cite{DiM14a}, this happens for a sequence $\{f_k\}_k\subseteq\LIPbs(\XX)$), it holds that $\lip(f_k)\mass\rightharpoonup\abs{\DIFF f}$ in duality with $\Cb(\XX)$.

 If $f=\chi_E$, we say that $E$ is a set of locally finite perimeter if $\abs{\DIFF\chi_E}(A)<\infty$ for every $A$ bounded open subset of $\XX$ and we say that $E$ is a set of finite perimeter if $\abs{\DIFF\chi_E}(\XX)<\infty$.
 
If $f\in\BVv$ or $f=\chi_E$, with $E$ set of locally finite perimeter, $\abs{\DIFF f}(\,\cdot\,)$ turns out to be the restriction to open sets of a Borel measure (finite or locally finite) that we denote with the same symbol and we call total variation. If $f=\chi_E$, we denote $\abs{\DIFF f}(\,\cdot\,)$ also with $\per(E,\,\cdot\,)$. 

Notice that, by its very definition, the total variation is lower semicontinuous with respect to $\Lploc^1$ convergence, is subadditive and $\abs{\DIFF (\phi \circ f)}\le L \abs{\DIFF f}$ whenever $f\in\BVv$ and $\phi$ is $L$-Lipschitz. Finally, \eqref{poincare2} and, in particular, \eqref{poincare1} extend immediately to the case $f\in\BVv$, with $\frac{1}{\mass(B_{c r}(x))}\abs{\DIFF f}(B_{c r}(x))$ in place of $\dashint_{B_{c r}(x)} \lip(f)\dd{\mass}$.

Several classical results have been generalized to the abstract framework of metric measure spaces.
Among them, the Fleming-Rishel coarea formula, which states that given $f\in\BVv$, the set $\{f>r\}$ has finite perimeter for $\LL^1$-a.e.\ $r\in\RR$ and
\begin{equation}
	\label{coareaeq}
	\int_\XX h\dd{\abs{\DIFF f}}=\int_\RR\dd{r} \int_\XX h\dd{\per(\{f>r\},\,\cdot\,)}\quad\text {for any Borel function $h:\XX\rightarrow[0,\infty].$}
\end{equation}
In particular,
\begin{equation}
	\label{coareaeqdiff}
	{\abs{\DIFF f}}(A)=\int_\RR\dd{r}  {\per(\{f>r\},A)}\quad\text {for any $A\subseteq \XX$ Borel}.
\end{equation}
A standard consequence of the coarea formula is that given $x\in\XX$, then for $\LL^1$-a.e\ $r\in(0,\infty)$ the ball $B_r(x)$ has finite perimeter. In the framework of $\RCD(K,N)$ spaces this conclusion holds for every $r\in(0,\infty)$ and the Bishop-Gromov inequality provides sharp upper bounds for perimeters of balls.
We also recall that sets of finite perimeters are an algebra, more precisely, if $E$ and $F$ are sets of finite perimeter, then  
\begin{equation}\notag
	\per(E,\,\cdot\,)=\per(\XX\setminus E,\,\cdot\,)\quad\text{and}\quad\per(E\cap F,\,\cdot\,)+\per(E\cup F,\,\cdot\,)\le\per(E,\,\cdot\,)+\per(F,\,\cdot\,).
\end{equation}

\bigskip

Given a measurable set $E$, we define its essential boundary as
\begin{equation}\label{defnessboundary}
	\partial^* E\defeq\left\{x\in\XX: \limsup_{r\searrow 0} \frac{\mass(B_r(x)\cap E)}{\mass (B_r(x))}>0\text{ and } \limsup_{r\searrow 0}\frac{\mass(B_r(x)\setminus E)}{\mass (B_r(x))}>0 \right\},
\end{equation}
and given a measurable function $f:\XX\rightarrow\RR$, we define the approximate lower and upper limits as 
\begin{alignat*}{3}
	f^{\wedge}(x)&\defeq \apliminf_{y\rightarrow x} f(y)&&\defeq\sup&&\left\{t\in\bar{\RR}: \lim_{r\searrow 0} \frac{\mass(B_r(x)\cap\{f<t\})}{\mass(B_r(x))}=0\right\}, \\
	f^{\vee}(x)&\defeq \aplimsup_{y\rightarrow x} f(y)&&\defeq\inf&&\left\{t\in\bar{\RR}:\lim_{r\searrow 0} \frac{\mass(B_r(x)\cap\{f>t\})}{\mass(B_r(x))}=0\right\}.
\end{alignat*}
Notice that if $E$ is a measurable subset of $\XX$, then
\begin{equation}\label{partialstar}
	\partial^*E=\{x: (\chi_E^\wedge(x), \chi_E^\vee(x))=(0,1)\}.
\end{equation}
We set 
\begin{equation}\label{jumpf}
	S_f\defeq\left\{x\,:\,f^{\wedge}(x)<f^{\vee}(x) \right\}.
\end{equation}
If $x\in\XX\setminus S_f$, then $f^{\wedge}(x)=f^{\vee}(x)$ and we denote their common value by $\bar{f}(x)$. If $x\in S_f$ we define \begin{equation}\label{precrep}
	\bar{f}(x)\defeq\dfrac{f^{\wedge}(x)+f^{\vee}(x) }{2},
\end{equation} adopting the convention $+\infty-\infty=0$. We call $\bar{f} $ the precise representative of $f$.

It is possible to prove (see \cite[Lemma 3.2]{kinkorshatuo}) that if $(\XX,\dist,\mass)$ is a $\PI$ space and $f\in\BVv$,
\begin{equation}\label{finitenessprec}
	-\infty<f^\vee(x)\le f^\wedge(x)<\infty\quad\text{ for }\HH^h\text{-a.e.\ } x\in\XX.
\end{equation}
It is well known that $S_f$ can be written as a countable union of reduced boundaries of sets of finite perimeter. It is worth noticing that then Theorem \ref{rectredbound} below implies the rectifiability of $S_f$ in the $\RCD(K,N)$ setting.

\bigskip

We recall that \cite[Lemma 5.2]{amb01} and \cite[Theorem 1.12]{bru2019rectifiability} show that, in the framework of $\PI$ spaces (in particular, in the framework of $\RCD(K,N)$ spaces),
\begin{equation}\notag
	\per(E,\,\cdot\,)\ll\HH^h\ll\capa,
\end{equation}
where $\HH^h$ is the codimension one spherical Hausdorff measure, defined as
$$\HH^h(A)\defeq\lim_{\delta\searrow 0}\HH^h_\delta(A) $$
where (allowing some radius $r_i$ to be $0$)
$$ \HH^h_\delta(A)\defeq\inf\left\{\sum_{i\in\NN} \frac{\mass(B_{r_i}(x_i))}{r_i}:A\subseteq \bigcup_{i\in\NN} {B_{r_i}(x_i)},\ r_i\le \delta\right\}.$$
Recalling the coarea formula \eqref{coareaeqdiff},
\begin{equation}\label{diffllxxxeq}
	\abs{\DIFF f}\ll\HH^h\ll\capa
\end{equation}
whenever $f\in\BVv$. However, we will prove in Theorem \ref{diffllcapthm} that 
\begin{equation}\label{diffllcapeq}
	\abs{\DIFF f}\ll\capa
\end{equation}
holds for any m.m.s.\ $(\XX,\dist,\mass)$ and $f\in\BVv$.

\bigskip

In \cite{ambrosio2018rigidity}, the authors defined a notion of fine tangent bundle tailored for subsets of locally finite perimeter. More precisely (the notions of convergence can be found e.g.\ in \cite{GMS15,AH16}),
\begin{defn}
	\label{defntanfp}
	Let $(\XX,\dist,\mass)$ be an $\RCD(K,N)$ space, $x\in\XX$ and $E$ a measurable subset of $\XX$. We say that the quintuple $(\XX',\dist',\mass',x',E')$ is tangent to $(\XX,\dist,\mass,E)$ at $x$ if there exists a sequence of radii $\{r_j\}_j$, $r_j\searrow0$, such that
	\begin{enumerate}[label=\roman*)]
		\item $(\XX,{r_j}^{-1}\dist,\mass^x_{r_j},x)\rightarrow (\XX',\dist',\mass',x')$ in the pointed measured Gromov-Hausdorff topology,
		\item
		$E'$ is measurable subset of $\XX'$ such that $\{E_j\defeq E\}_j$ converges in $\Lploc^1$ to $E'$ along the sequence of rescaled spaces as in $i)$.
	\end{enumerate} 
	We denote the collection of all tangent spaces to $(\XX,\dist,\mass,E)$ at $x$ as $\Tan_x(\XX,\dist,\mass,E)$.
\end{defn}
We consider two elements $(\XX',\dist',\mass',x',E'),\ (\XX'',\dist'',\mass'',x'',E'')\in\Tan_x(\XX,\dist,\mass,E)$ to be isomorphic if there exists an isometry $\Psi:\XX'\rightarrow\XX''$ such that $\Psi(x')=x''$, $\Psi_*\mass'=\mass''$ (i.e.\ $(\XX',\dist',\mass',x')$ and $(\XX'',\dist'',\mass'',x'')$ are isomorphic as pointed metric measure spaces) and $\mass''(\Psi(E')\Delta E'')=0$.
\begin{thm}[{\cite[Theorem 3.2]{bru2019rectifiability} and \cite[Theorem 3.1]{bru2021constancy}}]
	\label{uniquenesstaper}
	Let $(\XX,\dist,\mass)$ be an $\RCD(K,N)$ space of essential dimension $n$ and let $E\subseteq\XX$ be a subset of locally finite perimeter. Then, for $\abs{\DIFF \chi_E}$-a.e.\ $x\in\XX$ it holds
	\begin{equation}\label{tangood}
		\Tan_{x}(\XX,\dist,\mass,E)\defeq\left\{(\RR^n,\dist_e,\tilde{\LL}^n,0,\{x_n>0\})\right\}.
	\end{equation}
\end{thm}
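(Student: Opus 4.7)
The plan is a blow-up argument combined with the $\RCD$ splitting theorem and a constancy lemma. To extract tangents at each $x$, I would use the pointed measured Gromov-Hausdorff precompactness of the rescaled spaces $(\XX, r_j^{-1}\dist, \mass_{r_j}^x, x)$ --- which holds by Gromov's compactness since the $\RCD(K,N)$ condition is stable under rescaling and pointed measured Gromov-Hausdorff convergence --- and extract along the same subsequence an $\Lploc^1$-limit $E'$ of the indicator $\chi_E$. The latter is made possible by locally uniform bounds on the rescaled perimeters (which follow from doubling and the Poincaré inequality \eqref{poincare1}) together with the Miranda-style compactness of sets of locally finite perimeter in doubling PI spaces. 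This produces a non-empty tangent quintuple $(\XX',\dist',\mass',x',E')$ for every $x \in \XX$.

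For $\abs{\DIFF \chi_E}$-a.e.\ point $x$ the tangent quintuple is moreover geometrically regular. The essential properties I would isolate are: a positive lower bound on the asymptotic perimeter density at $x$ (which holds $\abs{\DIFF \chi_E}$-a.e.\ by the general BV theory on PI spaces, cf.\ \eqref{diffllxxxeq}); stability of perimeter measures along blow-ups so that $E'$ has locally finite perimeter in $\XX'$ and $\abs{\DIFF \chi_{E_j}} \rightharpoonup \abs{\DIFF \chi_{E'}}$; and at $\abs{\DIFF \chi_E}$-a.e.\ $x$, the rescalings are asymptotically conical, so $(\XX', \dist', \mass')$ is a metric measure cone at $x'$ and $E'$ is a cone with vertex $x'$.

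The main obstacle is identifying the tangent cone with $(\RR^n, \dist_e, \tilde{\LL}^n)$ and $E'$ with $\{x_n > 0\}$. I would proceed iteratively: by a Preiss-type argument, tangents of tangents are themselves tangents, so one may take a tangent inside the tangent at a point selected via the conical structure to produce one line-splitting direction. The $\RCD(0,N)$ splitting theorem then factors the tangent as $\RR \times Y'$ with $E'$ a half-space in the $\RR$-factor times its image in $Y'$. Iterating inside $Y'$ yields a Euclidean factor $\RR^k$; one then invokes a constancy-of-essential-dimension argument to force $k = n$ (perimeter translation invariance in $k$ directions combined with lower density estimates on the remaining cross-section produces a contradiction if $k < n$, since the cross-section would otherwise carry a non-trivial perimeter measure on a space of essential dimension strictly less than $n$). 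Once the tangent is $\RR^n$, the constancy lemma forces $E'$ to be translation-invariant in $n-1$ coordinates, so Euclidean De Giorgi theory identifies $E'$ up to rotation with $\{x_n > 0\}$; the singleton assertion follows because this identification does not depend on the subsequence of radii.
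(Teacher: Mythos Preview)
The paper does not supply a proof of this theorem: it is quoted verbatim from \cite[Theorem 3.2]{bru2019rectifiability} and \cite[Theorem 3.1]{bru2021constancy} and used as a black box. So there is no ``paper's own proof'' to compare against; what one can do is check your sketch against the actual arguments in those references (together with the precursor \cite{ambrosio2018rigidity}).

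Your outline is broadly faithful to that route --- compactness, iterated tangents, splitting, then a constancy/dimension argument --- but two steps are not right as written. First, the assertion that ``at $\abs{\DIFF \chi_E}$-a.e.\ $x$ the rescalings are asymptotically conical, so $(\XX',\dist',\mass')$ is a metric measure cone'' is not how the argument proceeds and is not known a priori: there is no monotonicity formula available here forcing tangents to be cones. In \cite{ambrosio2018rigidity} the Euclidean factor is produced not from a cone structure but from the \emph{iterated tangents} principle combined with the splitting theorem, and the half-space structure of $E'$ comes from a careful analysis using the perimeter lower density and the rigidity of the isoperimetric profile on $\RCD(0,N)$ spaces. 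Second, your last sentence does not establish the \emph{singleton} claim: knowing that every subsequential tangent is isometric to $(\RR^n,\{x_n>0\})$ does not by itself force convergence along the full family $r\searrow 0$, because the identification is only up to isometry. In \cite{bru2019rectifiability} uniqueness is obtained via the Gauss--Green formula and the existence of the polar vector $\nu_E$, which pins down the orientation of the half-space in the limit and rules out oscillation between different subsequences. Your sketch of the step $k=n$ (``perimeter translation invariance \dots\ produces a contradiction if $k<n$'') is also too vague to stand on its own; the actual argument in \cite{bru2021constancy} is considerably more delicate.
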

We call the collection of points $x\in\XX$ satisfying \eqref{tangood} above $\mathcal{F}_n E$ and we see then that $\abs{\DIFF \chi_E}$ is concentrated on $\mathcal{F}_n E$, if $E$ is a subset of locally finite perimeter.

To prove \cite[Theorem 4.2]{ambrosio2018rigidity} (which is an intermediate step in the proof of the theorem above), the authors used the following fact, which follows from the compactness result in \cite[Corollary 3.4]{ambrosio2018rigidity} (to this end, one uses \cite[(5.2)]{amb01}).
\begin{lem}
	\label{compactnessfp}
	Let $(\XX,\dist,\mass)$ be an $\RCD(K,N)$ space and $E$ a subset of locally finite perimeter of $\XX$. For $\abs{\DIFF\chi_E}$-a.e.\ $x\in\XX$ the following conclusion holds. If $\{r_j\}_j$, with $r_j\searrow 0$, is a sequence of radii, then there exists a subsequence $\{r_{j_m}\}_m$ and a quintuple $(\XX',\dist',\mass',x',E')$ that is tangent to $(\XX,\dist,\mass,E)$ at $x$ according to the subsequence $\{r_{j_m}\}_m$.
\end{lem}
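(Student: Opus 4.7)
The plan is a two-step compactness argument: one for the metric-measure structure, and one for the set itself.

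For the first step, I would invoke Gromov's compactness theorem together with the rescaling and stability properties of the $\RCD$ condition. Since $(\XX, r_j^{-1}\dist)$ satisfies $\RCD(r_j^2 K, N)$ and the measures $\mass^x_{r_j}$ are normalized by construction so that $\int_{B_1^{r_j^{-1}\dist}(x)}(1-r_j^{-1}\dist(x,\,\cdot\,))\dd\mass^x_{r_j}=1$, the sequence $(\XX, r_j^{-1}\dist, \mass^x_{r_j}, x)$ is pmGH-precompact. Hence along a subsequence $\{r_{j_m}\}$ it converges to a pointed $\RCD(0,N)$ space $(\XX',\dist',\mass',x')$, giving item $i)$ of Definition \ref{defntanfp}.

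For the second step, I would extract a further subsequence for which the set converges in $L^1_{\mathrm{loc}}$. This is where \cite[Corollary 3.4]{ambrosio2018rigidity} enters: under pmGH convergence of $\RCD(K,N)$ spaces, any sequence of subsets $E_m$ whose perimeters are uniformly bounded on balls of fixed radius around the basepoints admits an $L^1_{\mathrm{loc}}$-convergent subsequence, with limit of locally finite perimeter. The required uniform bound is provided by \cite[(5.2)]{amb01}, which at $\abs{\DIFF\chi_E}$-a.e.\ $x$ yields the asymptotic upper density estimate
\[
\limsup_{r\searrow 0}\frac{r\,\per(E,B_r(x))}{\mass(B_r(x))}<\infty.
\]
Combining this with the normalization $\mass^x_{r_j}\sim c/\mass(B_{r_j}(x))\cdot\mass$ (up to doubling constants) and the scaling rule $\per_{r_j^{-1}\dist,\mass^x_{r_j}}(E,\,\cdot\,)=r_j\,c_{r_j}\per_{\dist,\mass}(E,\,\cdot\,)$, one obtains, for every fixed $R>0$, a bound $\per_{r_j^{-1}\dist,\mass^x_{r_j}}(E,B_R^{r_j^{-1}\dist}(x))\le C(R)$ that is uniform in $j$. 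A standard diagonal extraction over a countable exhaustion by radii $R_\ell\nearrow\infty$ then yields the quintuple $(\XX',\dist',\mass',x',E')$ satisfying both conditions of Definition \ref{defntanfp}.

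The main obstacle is verifying the uniform perimeter bound on balls in the rescaled spaces; this is not automatic from $E$ being of locally finite perimeter but relies on the $\abs{\DIFF\chi_E}$-a.e.\ upper density estimate for $\per(E,\,\cdot\,)$. That estimate in turn is a consequence of differentiation of measures on the (locally uniformly) doubling space $(\XX,\dist,\mass)$ combined with the absolute continuity $\per(E,\,\cdot\,)\ll\HH^h$ recalled in the excerpt. Once this upper density bound is in hand, the rest of the argument is the routine pmGH extraction and diagonalization described above.
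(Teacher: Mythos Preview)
Your proposal is correct and follows precisely the route the paper indicates: the paper does not give a detailed proof but simply states that the lemma follows from the compactness result \cite[Corollary 3.4]{ambrosio2018rigidity} together with the upper density estimate \cite[(5.2)]{amb01}, which is exactly the two-step argument you describe. Your additional details (the scaling computation for the perimeter in the rescaled spaces and the diagonal extraction over radii) are the natural way to flesh this out and are all correct.
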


\begin{thm}[{\cite[Theorem 4.1]{bru2019rectifiability}}]
	\label{rectredbound}
	Let $(\XX,\dist,\mass)$ be an $\RCD(K,N)$ space of essential dimension $n$ and let $E$ be a subset of locally finite perimeter of $\XX$. Then $\mathcal{F}_n E$ is $(n-1)$-rectifiable. More precisely, for any $\epsilon>0$, we can cover
	$\mathcal{F}_n E$, up to a $\abs{\DIFF \chi_E}$-negligible subset, by a countable union of subsets that are $(1+\epsilon)$-bilipschitz equivalent to measurable subsets of $\RR^{n-1}$.
\end{thm}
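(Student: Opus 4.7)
\emph{Proof strategy.} The plan is to exploit Theorem~\ref{uniquenesstaper}, which identifies the unique tangent at $\abs{\DIFF \chi_E}$-a.e.\ point $x\in\mathcal{F}_n E$ with the Euclidean half-space $(\RR^n,\dist_e,\tilde{\LL}^n,0,\{x_n>0\})$. Codimension-one flatness at all infinitesimal scales should force $\mathcal{F}_n E$ to be $(n-1)$-rectifiable in the stated quantitative form, and the argument breaks into three steps: stratification, construction of charts, and a blow-up contradiction to secure the bilipschitz bound.

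First I would stratify $\mathcal{F}_n E$ by scale of flatness. Fix $\epsilon>0$; for each $k\in\NN$ let $A_{k,\epsilon}$ be the set of those $x\in\mathcal{F}_n E$ such that for every $r\in(0,1/k]$ the rescaled quintuple $(\XX,r^{-1}\dist,\mass_r^x,x,E)$ is $\epsilon$-close, in the pointed measured Gromov--Hausdorff and $\Lploc^1$ sense, to $(\RR^n,\dist_e,\tilde{\LL}^n,0,\{x_n>0\})$. The unique-tangent conclusion of Theorem~\ref{uniquenesstaper}, combined with the compactness in Lemma~\ref{compactnessfp}, guarantees that $\mathcal{F}_n E\setminus\bigcup_k A_{k,\epsilon}$ is $\abs{\DIFF\chi_E}$-negligible, so the problem reduces to producing the bilipschitz decomposition on each $A_{k,\epsilon}$.

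Next I would build the charts. For $x_0\in A_{k,\epsilon}$ work in a ball $B=B_{1/(10k)}(x_0)$, fix a Gromov--Hausdorff $\epsilon/k$-approximation $\Phi=(\Phi',\Phi_n)\colon B\to\RR^{n-1}\times\RR$ coming from the scale-$1/k$ closeness to the half-space, and show that $\Phi'|_{A_{k,\epsilon}\cap B}$ is $(1+C\epsilon)$-bilipschitz onto its image. The upper bound $\abs{\Phi'(y)-\Phi'(z)}\le(1+C\epsilon)\dist(y,z)$ is automatic because $\Phi$ is an almost-isometry. For the lower bound I would argue by contradiction and blow-up: if $y_j,z_j\in A_{k,\epsilon}\cap B$ satisfied $\abs{\Phi'(y_j)-\Phi'(z_j)}\le\tfrac{1}{2}\dist(y_j,z_j)$ with $\dist(y_j,z_j)\to 0$, rescale at $y_j$ by $r_j=\dist(y_j,z_j)$ and use Lemma~\ref{compactnessfp} to extract a limit; by the unique-tangent property at $y_j\in\mathcal{F}_n E$ the limit must be the Euclidean half-space, the rescaled $z_j$ converge to a point $z_\infty$ with $\dist_e(0,z_\infty)=1$ that still has $E$ and $E^c$ of positive density at $z_\infty$ in the limit (because $z_j\in\mathcal{F}_n E$), so $z_\infty\in\{x_n=0\}=\RR^{n-1}\times\{0\}$. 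Then the rescaled normal coordinate of $z_j$ is infinitesimal, whence $\abs{\Phi'(y_j)-\Phi'(z_j)}/\dist(y_j,z_j)\to 1$, a contradiction.

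The main obstacle will be arranging the stratification so that \emph{both} $x_0$ \emph{and} any two comparison points in $A_{k,\epsilon}\cap B$ simultaneously enjoy closeness to the half-space model at the common scale $r\sim\dist(y,z)$; this forces a two-parameter refinement of the $A_{k,\epsilon}$ controlling both the flatness scale and a uniform density lower bound at points of $\mathcal{F}_n E$, and one must verify that the residual set is still $\abs{\DIFF\chi_E}$-negligible by invoking the a.e.\ convergence to the tangent half-space. Once this is in place, covering each refined stratum by countably many balls of radius $\ll 1/k$ yields the countable union of $(1+\epsilon)$-bilipschitz images of subsets of $\RR^{n-1}$ asserted in the theorem.
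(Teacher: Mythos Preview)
The paper does not prove this statement at all: it is quoted verbatim from \cite[Theorem 4.1]{bru2019rectifiability} and used as an input. So there is no in-paper argument to compare yours with; the relevant comparison is with the proof in \cite{bru2019rectifiability}, which proceeds via harmonic $\delta$-splitting maps and maximal-function/propagation estimates, not via raw Gromov--Hausdorff approximations.

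Your sketch has a genuine gap at the point where you declare a Gromov--Hausdorff $\epsilon/k$-approximation $\Phi$ at the fixed scale $1/k$ to be the chart. A GH $\eta$-approximation only gives the \emph{additive} distortion bound $\big|\,\dist(y,z)-|\Phi(y)-\Phi(z)|\,\big|\le\eta$, so neither the upper nor the lower Lipschitz inequality with multiplicative constant $(1+C\epsilon)$ follows for pairs $y,z$ with $\dist(y,z)\ll\eta$; in particular the sentence ``the upper bound \ldots\ is automatic because $\Phi$ is an almost-isometry'' is false as stated. The blow-up contradiction then breaks for the same reason: you rescale at $y_j$ by $r_j=\dist(y_j,z_j)\to 0$, but $\Phi$ was fixed at scale $1/k$, and under that rescaling its additive error becomes $\eta/r_j\to\infty$, so the rescaled $\Phi$ carries no information and you cannot conclude that $|\Phi'(y_j)-\Phi'(z_j)|/\dist(y_j,z_j)\to 1$. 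Knowing (via your last paragraph) that \emph{some other} GH approximation at scale $r_j$ is close to a half-space does not help, because that is a different map, unrelated to $\Phi$.

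What makes the argument in \cite{bru2019rectifiability} work is that the chart is not a GH approximation but an $(n-1)$-tuple of harmonic $\delta$-splitting functions built at one scale; their gradients are controlled in $L^2$, and a maximal-function/telescoping estimate propagates the almost-orthonormality of the gradients to all smaller scales on a set of full $\abs{\DIFF\chi_E}$-measure. This is exactly what produces a \emph{single} map that is $(1+\epsilon)$-bilipschitz down to scale zero on the good set. If you want to salvage a chart-based approach without splitting maps, you would need to replace $\Phi$ by something intrinsic (e.g.\ a carefully chosen tuple of distance functions, as in the Mondino--Naber rectifiability argument for the regular set) and then argue separately that these functions remain $(1+\epsilon)$-almost isometric on $A_{k,\epsilon}$ at every scale; that is a substantial additional piece of work, and in codimension one you must also control the choice of the $n-1$ ``tangential'' directions, which is where the boundary structure enters.
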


Given a set of locally finite perimeter $E$ in an $\RCD(K,N)$ space $(\XX,\dist,\mass)$,
\cite[Corollary 3.15]{bru2019rectifiability} (see also \cite[Corollary 3.2]{bru2021constancy}) proves that 
\begin{equation}\label{reprformula1}
	\abs{\DIFF\chi_E}=\frac{\omega_{n-1}}{\omega_n}\HH^h\mres\mathcal{F}_n E.
\end{equation}

In \cite[Theorem 5.3]{amb01} (see also \cite[Theorem 4.6]{ambmirpal04}), the following representation formula was given:
\begin{equation}\label{reprformula2}
	\abs{\DIFF\chi_E}=\theta_E \HH^h\mres\partial^*E.
\end{equation}
for some Borel function $\theta_E:\XX\rightarrow[\alpha,\beta]$ where $0<\alpha<\beta<\infty$. Comparing the two representations above, it follows that 
\begin{equation}\label{reprformula3}
\abs{\DIFF\chi_E}=\frac{\omega_{n-1}}{\omega_n} \HH^h\mres\partial^*E.
\end{equation}

Using the compactness result in Lemma \ref{compactnessfp} together with the rigidity property in Theorem \ref{uniquenesstaper}, it is easy to prove that
\begin{equation}\notag
	\text{$\abs{\DIFF\chi_E}$-a.e. $x$ is a point of density $1/2$ of $E$.}
\end{equation}  
Also, taking into account \eqref{reprformula3}, we obtain that
\begin{equation}\label{unmezzoqoeq}
	\text{$\HH^h$-a.e.\ $x\in\partial^* E$ is a point of density $1/2$ of $E$.}
\end{equation}

\section{The theory for general metric measure spaces}
\subsection{Basic knowledge}
The following representation formula for the total variation is based on a result proved in \cite{DiMarino14} and then modified in \cite{buffa2020bv} (see \cite[Remark 3.18]{buffa2020bv}). In the particular setting of $\RCD(K,\infty)$ spaces, it is possible to use an approximation argument to provide a direct proof (cf.\ Proposition \ref{reprvett1}).
\begin{prop}[Representation formula]\label{reprfordiffregularpre2}
	Let $(\XX,\dist,\mass)$ be a metric measure space and $f\in\BVv$. Then, for every $A$ open subset of $\XX$, it holds that
	\begin{equation}\label{intagainst}
		\abs{\DIFF f}(A)=\sup\left\{\int_A f \dive\, v\dd{\mass}\right\},
	\end{equation}
	where the supremum is taken among all $v\in\mathcal{W}_A$, where
	\begin{equation}\notag
		\mathcal{W}_A\defeq\left\{v\in D(\dive^\infty):\abs{v}\le 1\ \mass\text{-a.e.\ }\supp v\Subset A\right\}.
	\end{equation}
	Finally, the supremum can be equivalently taken among all $v\in\mathcal{\tilde{W}}_A$, where
	\begin{equation}\notag
		\mathcal{\tilde{W}}_A\defeq\left\{v\in D(\dive^\infty):\abs{v}\le 1\ \mass\text{-a.e.\ }\supp v\subseteq A\right\}.
	\end{equation}
\end{prop}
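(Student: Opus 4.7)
The plan is to prove two inequalities separately: the easy upper bound $\sup \leq \abs{\DIFF f}(A)$, which works directly for the larger class $\tilde{\WW}_A$, and the harder lower bound $\abs{\DIFF f}(A) \leq \sup$ taken only over the smaller class $\WW_A$. Together with the trivial inclusion $\WW_A \subseteq \tilde{\WW}_A$, these force both suprema to coincide with $\abs{\DIFF f}(A)$, so the only real obstacle is the second bound.

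For the upper bound I would fix $v \in \tilde{\WW}_A$ and select an optimal sequence $\{f_k\} \subseteq \LIPbs(\XX)$ with $f_k \to f$ in $\Lpu$ and $\lip(f_k)\mass \rightharpoonup \abs{\DIFF f}$ in duality with $\Cb(\XX)$, as recalled in the preliminaries via \cite{DiM14a}. Since each $f_k \in \HSs$ has bounded support and $v \in D(\dive^\infty)$, the defining identity of $\dive^\infty$ yields $\int_\XX f_k \dive\,v \dd{\mass} = -\int_\XX \diff f_k(v)\dd{\mass}$. The left-hand side converges to $\int_\XX f \dive\,v\dd{\mass}$ by $\Lpu$--$\Lpi$ duality, using $\dive\,v \in \Lpi$. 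On the right-hand side, the pointwise bound $\abs{\diff f_k(v)} \leq \abs{\diff f_k}\abs{v} \leq \lip(f_k)\chi_{\supp v}$ combined with the Portmanteau characterization of narrow convergence (upper semicontinuity on closed sets, applicable since $\supp v$ is closed in $\XX$) gives
$$\limsup_k \int_\XX \lip(f_k) \chi_{\supp v}\dd{\mass} \leq \abs{\DIFF f}(\supp v) \leq \abs{\DIFF f}(A),$$
the final step using monotonicity of the Radon measure $\abs{\DIFF f}$ and $\supp v \subseteq A$. Replacing $v$ by $-v \in \tilde{\WW}_A$ then gives $\abs{\int_\XX f \dive\,v\dd{\mass}} \leq \abs{\DIFF f}(A)$, proving the upper bound for the larger class at once.

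For the lower bound I would invoke the duality machinery from \cite{DiMarino14}, where $\abs{\DIFF f}(A)$ is characterized as a supremum over Weaver-type derivations with divergence in $\Lpi$ and support $\Subset A$; the identification of such derivations with elements of $D(\dive^\infty) \cap \tanXinf$ of unit norm is the content of \cite[Remark 3.18]{buffa2020bv}. This is the step that genuinely requires the Weaver/derivations framework and cannot be obtained by the elementary integration by parts used above: the hard part is producing explicit vector fields whose divergence realizes the total variation, and doing so on a general metric measure space is a nontrivial Hahn--Banach / duality argument. In the $\RCD(K,\infty)$ setting this same inequality admits a constructive proof via heat-flow regularization, mollifying $f$ by $\heat_t$ and choosing $v$ of the form $-\phi\,\nabla\heat_t f/|\nabla\heat_t f|$ (suitably cut off), which is exactly the approach taken in Proposition \ref{reprvett1} below. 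Combining the two inequalities with $\WW_A \subseteq \tilde{\WW}_A$ produces the chain
$$\abs{\DIFF f}(A) \leq \sup_{v \in \WW_A}\int_\XX f\dive\,v\dd{\mass} \leq \sup_{v \in \tilde{\WW}_A}\int_\XX f\dive\,v\dd{\mass} \leq \abs{\DIFF f}(A),$$
which closes the argument and proves both versions of \eqref{intagainst} simultaneously.
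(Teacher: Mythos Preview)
Your proposal is correct and follows essentially the same route as the paper: the upper bound via an optimal approximating sequence $\{f_k\}\subseteq\LIPbs(\XX)$ with $\lip(f_k)\mass\rightharpoonup\abs{\DIFF f}$, and the lower bound by invoking the derivation-based representation of \cite{DiMarino14} together with \cite[Lemma 3.12 / Remark 3.18]{buffa2020bv} to pass from Weaver derivations to vector fields in $D(\dive^\infty)$.

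The one organizational difference worth noting is in how the class $\tilde{\WW}_A$ is handled. You prove the upper bound directly for $v\in\tilde{\WW}_A$ by applying Portmanteau to the closed set $\supp v\subseteq A$, obtaining $\limsup_k\int_{\supp v}\lip(f_k)\dd{\mass}\le\abs{\DIFF f}(\supp v)\le\abs{\DIFF f}(A)$ in one stroke. The paper instead first treats $v\in\WW_A$ by exploiting $\supp v\Subset A$ to squeeze in an open set $B$ with $\bar{B}\subseteq A$, and only afterwards upgrades to $\tilde{\WW}_A$ via a separate inner-regularity argument (choosing a compact $K\subseteq\XX\setminus A$ with $\abs{\DIFF f}((\XX\setminus A)\setminus K)<\epsilon$ and noting $\supp v\Subset\XX\setminus K$). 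Your direct use of Portmanteau is cleaner and avoids this extra step; the paper's route, on the other hand, makes the passage from $\WW_A$ to $\tilde{\WW}_A$ a standalone observation that can be reused verbatim in the vector-valued Proposition~\ref{reprvett1}.
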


\begin{proof}
	Fix $A\subseteq\XX$ open.
	If $v\in\mathcal{W}_A$, as $\supp\,v\Subset A$, we can find $B$ open with $v\in\mathcal{W}_B$ and $\bar{B}\subseteq A$. Take now a sequence $\{f_n\}_n\subseteq\LIPbs(\XX)$ with $f_n\rightarrow f$ in $\Lpu$ and $(\lip(f_n)\mass)(\XX)\rightarrow\abs{\DIFF f}(\XX)$ (hence $\lip(f_n)\mass\rightharpoonup\abs{\DIFF f}$ in duality with $\Cb(\XX)$). Then $$\int_\XX f\dive\, v\dd{\mass}=\lim_n\int_\XX f_n\dive\, v\dd{\mass}=- \lim_n \int_\XX\diff{f_n}(v)\dd{\mass}. $$
	We have that for every $n$ (recall the bound $\abs{\diff f_n}\le\lip (f_n)\ \mass$-a.e.),
	$$ \abs{  \int_\XX\diff{f_n}(v)\dd{\mass}}\le \int_B \lip (f_n)\dd{\mass}.$$
	Exploiting the weak convergence $\lip(f_n)\mass\rightharpoonup\abs{\DIFF f}$ we have
	\begin{equation*}
		\limsup_n\int_B \lip (f_n)\dd{\mass}\le \abs{\DIFF f}(\bar{B})\le \abs{\DIFF f}(A)
	\end{equation*}
	and this proves that the quantity defined by the supremum in \eqref{intagainst} is bounded by $\abs{\DIFF f}(A)$.
	
	Now, (with the notation of \cite{DiMarino14,buffa2020bv}), let $\delta\in \mathrm{Der}^{\infty,\infty}(\XX)$ be with $\abs{\delta}\le 1\ \mass$-a.e.\ and $\supp\delta\Subset A$. Then $\delta\in \mathrm{Der}^{2,2}(\XX)$ so that, using \cite[Lemma 3.12]{buffa2020bv}, we can find a vector field $v_\delta\in D(\dive)$ such that $\abs{v_\delta}\le\abs{\delta}\ \mass$-a.e.\ and $\dive\,v_\delta=\dive\,\delta\ \mass$-a.e.\ and then also the opposite inequality in \eqref{intagainst} is proved, in virtue of \cite[Theorem 3.4]{DiMarino14}.
	
	In order to conclude, we just have to show that if $A\subseteq\XX$ is open and $v\in\mathcal{\tilde{W}}_A$, then $$ \int_\XX f\dive\, v\dd{\mass}\le\abs{\DIFF f}(A).$$
	By an immediate approximation argument, there is no loss of generality in assuming that $v$ has bounded support.
	Let $\epsilon>0$. By regularity, let $K\subseteq\XX$ be a compact set with $K\subseteq\XX\setminus A$ and $\abs{\DIFF f}((\XX\setminus A)\setminus K)<\epsilon$. It is clear that $\supp v\Subset\XX\setminus K$, so that 
	\begin{equation*}
		\int_\XX f\dive\,v\dd{\mass}\le \abs{\DIFF f}(\XX\setminus K)\le \abs{\DIFF f}(A)+\epsilon,
	\end{equation*}
	so that the proof is concluded being $\epsilon>0$ arbitrary.
\end{proof}

\begin{rem}\label{interpretationint}
	If $f\in\BVv$, $v\in D(\dive)\cap\Lpi$ and $\{n_k\}_k\subseteq(0,\infty)$, $\{m_k\}_k\subseteq(0,\infty)$ are two sequences with $\lim_k n_k=\lim_k m_k=+\infty$, then the limit
	\begin{equation}\label{defint}
		\lim_k  \int_\XX(f\vee -m_k)\wedge n_k\dive\,v\dd{\mass}
	\end{equation}
	exists finite and does not depend on the particular choice of the sequences $\{n_k\}_k$ and $\{m_k\}_k$.  
	Indeed, a cut-off argument and an approximation argument as the one in the proof of Proposition \ref{reprfordiffregularpre2} yields that, if $g\in\BVv\cap\Lpi$ and  $v$ is as above, then $$\abs{\int_\XX g \dive\,v\dd{\mass}}\le \abs{\DIFF g}(\XX)\Vert v\Vert_{\tanXinf},$$ so that, using also \eqref{coareaeqdiff}, we get the claim. 
	
	Therefore, if $f\in\BVv$ and $v\in D(\dive)\cap\Lpi$, we can write
	$$\int_\XX f\dive\, v\dd{\mass},$$
	with the convention that it has to be interpreted as the limit in \eqref{defint}.\fr
\end{rem}
\subsection{Total variation and capacity}
	We recall the definition of the $2$-capacity on a metric measure space $(\XX,\dist,\mass)$ (to which we shall simply refer as capacity): for any set $A\subseteq\XX$ we set 
$$\capa(A)\defeq\inf\left\{ \Vert f\Vert_{\HSs}^2:f\in\HSs,f\ge 1\ \mass\text{-a.e.\ on some neighbourhood of $A$}\right\}. $$
\begin{lem}\label{capaconlip}
Let $(\XX,\dist,\mass)$ be a metric measure space and let $K\subseteq\XX$ be a compact set.
Then 
\begin{equation}\label{capaineq}
	\capa(K)=\inf \Vert f\Vert^2_{\HSs}=\inf\int_\XX f^2+\lip(f)^2\dd{\mass}
\end{equation}
where both the infima are taken among all functions $f\in\LIPbs(\XX)$ such that $f\ge1$ on a neighbourhood of $K$. 
\end{lem}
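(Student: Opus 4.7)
The plan is to introduce three quantities
\begin{equation*}
\mathcal{A}\defeq\capa(K),\qquad \mathcal{B}\defeq\inf\Vert f\Vert^2_{\HSs},\qquad \mathcal{C}\defeq\inf\int_\XX f^2+\lip(f)^2\dd{\mass},
\end{equation*}
where the infima in $\mathcal{B}$ and $\mathcal{C}$ are taken over $f\in\LIPbs(\XX)$ with $f\ge 1$ on a neighbourhood of $K$, and to establish the chain $\mathcal{A}\le\mathcal{B}\le\mathcal{C}\le\mathcal{A}$. The first two inequalities come for free: $\mathcal{A}\le\mathcal{B}$ follows from $\LIPbs(\XX)\subseteq\HSs$, while $\mathcal{B}\le\mathcal{C}$ follows from the elementary estimate $\int\vert\nabla f\vert^2\dd{\mass}\le\int\lip(f)^2\dd{\mass}$ for $f\in\LIPb(\XX)\cap\Lpt$ obtained by taking the constant sequence in the definition \eqref{defch} of $\Ch$.

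The core of the argument is $\mathcal{C}\le\mathcal{A}$. Fix $\epsilon>0$ and choose $f\in\HSs$ with $f\ge 1$ on some open $V\supseteq K$ and $\Vert f\Vert^2_{\HSs}\le\mathcal{A}+\epsilon$. Since the truncation $f\mapsto(f\vee 0)\wedge 1$ does not increase the $\HSs$-norm and preserves $f\ge 1$ on $V$, I may assume $0\le f\le 1$ with $f\equiv 1$ on $V$. By the very definition of $\Ch$, combined with a standard radial cutoff to enforce bounded support, I find $\{f_k\}_k\subseteq\LIPbs(\XX)$ with $0\le f_k\le 1$, $f_k\to f$ in $\Lpt$, and $\int\lip(f_k)^2\dd{\mass}\to\int\vert\nabla f\vert^2\dd{\mass}$.

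Exploiting the compactness of $K$, I next pick open sets $K\subseteq V'\Subset V''\Subset V$ and a Lipschitz cutoff $\phi\in\LIPbs(\XX)$ with $0\le\phi\le 1$, $\phi\equiv 1$ on $V'$ and $\supp\phi\subseteq\overline{V''}\subseteq V$, and define
\begin{equation*}
g_k\defeq\phi+(1-\phi)f_k\in\LIPbs(\XX),
\end{equation*}
so that $g_k\equiv 1$ on $V'\supseteq K$, making each $g_k$ admissible for $\mathcal{C}$. The identity $g_k(y)-g_k(x)=(1-\phi(y))(f_k(y)-f_k(x))+(\phi(y)-\phi(x))(1-f_k(x))$ yields the pointwise bound
\begin{equation*}
\lip(g_k)\le(1-\phi)\lip(f_k)+\vert 1-f_k\vert\lip(\phi)\qquad\mass\text{-a.e.,}
\end{equation*}
and hence, by Cauchy--Schwarz,
\begin{equation*}
\int\lip(g_k)^2\dd{\mass}\le\int\lip(f_k)^2\dd{\mass}+2\Bigl(\int\lip(f_k)^2\dd{\mass}\Bigr)^{1/2}\Bigl(\int(1-f_k)^2\lip(\phi)^2\dd{\mass}\Bigr)^{1/2}+\int(1-f_k)^2\lip(\phi)^2\dd{\mass}.
\end{equation*}
Because $\lip(\phi)$ is supported in $V$ and $f\equiv 1$ there, the $\Lpt$-convergence $f_k\to f$ forces $\int(1-f_k)^2\lip(\phi)^2\dd{\mass}\to 0$. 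The identity $f=\phi+(1-\phi)f$ on $\XX$, valid thanks to $f\equiv 1$ on $\supp\phi\subseteq V$, further gives $g_k-f=(1-\phi)(f_k-f)\to 0$ in $\Lpt$. Passing to the limit yields $\int g_k^2+\lip(g_k)^2\dd{\mass}\to\Vert f\Vert^2_{\HSs}\le\mathcal{A}+\epsilon$, whence $\mathcal{C}\le\mathcal{A}+\epsilon$, and letting $\epsilon\searrow 0$ closes the chain.

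The only real (and rather mild) obstacle is controlling the error $\int(1-f_k)^2\lip(\phi)^2\dd{\mass}$ introduced by the cutoff: it is negligible in the limit precisely because $f$ is literally constant $1$ on the open set $V$ on which $\lip(\phi)$ is concentrated, so the $\Lpt$-closeness of $f_k$ to $1$ there suffices to absorb it.
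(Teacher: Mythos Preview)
Your proof is correct and follows essentially the same approach as the paper's: truncate a near-optimal $H^{1,2}$ competitor to $[0,1]$, approximate it in energy by Lipschitz functions with bounded support, then correct the approximants near $K$ via the convex combination $\phi+(1-\phi)f_k$ with a compactly supported cutoff $\phi$, and control the resulting slope by the sublinearity estimate $\lip(g_k)\le(1-\phi)\lip(f_k)+\lvert 1-f_k\rvert\lip(\phi)$. The only cosmetic difference is that the paper invokes the convexity inequality for the slope from the literature, whereas you derive the needed pointwise bound directly; also, strictly speaking you obtain $\limsup_k\int g_k^2+\lip(g_k)^2\dd\mass\le\Vert f\Vert_{\HSs}^2$ rather than convergence, but this is all that is needed.
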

\begin{proof}
Recall that, by its very definition, $\capa(K)=\inf \Vert f\Vert_{\HSs}^2$, where the infimum is taken among all functions $f\in\HSs$ such that $f\ge 1\ \mass$-a.e.\ on a neighbourhood of $K$.

Recalling that if $f\in\LIPbs(\XX)$, then $f\in\HSs$ and $$\Vert f\Vert_{\HSs}^2\le\int_\XX f^2+\lip(f)^2\dd{\mass},$$ we immediately obtain the two inequalities $(\le)$ in \eqref{capaineq}.

To conclude, we can assume with no loss of generality that $\capa(K)<\infty$. If $\epsilon>0$, fix $g\in\HSs$ with $g\ge 1\ \mass$-a.e.\ on a neighbourhood of $K$ such that $\Vert g \Vert_{\HSs}^2 \le \capa(K)+\epsilon$. Up to replacing $g$ with $0\vee g\wedge 1$, there is no loss of generality in assuming that $g$ takes values in $[0,1]$ and that $g= 1\ \mass$-a.e.\ on a neighbourhood of $K$, call this neighbourhood $A$. Let also $\{g_n\}\subseteq\LIPbs(\XX)$ be such that $g_n\rightarrow g$ in $\Lpt$ and $\int_\XX\lip(g_n)^2\dd{\mass}\rightarrow 2\Ch(g)$ (using an immediate cut-off argument we can replace $\LIPb(\XX)\cap\Lpt$ with $\LIPbs(\XX)$ in \eqref{defch}).
Take $\eta\in\LIPbs(\XX)$ such that $\eta=1$ on a neighbourhood of $K$, $\eta(x)\in[0,1]$ for every $x\in\XX$ and $\supp\eta\subseteq A$ (here we use the compactness of $K$).
Set now $f_n\defeq (1-\eta)g_n+\eta\in\LIPbs(\XX)$ and notice that $f_n\ge 1$ on a neighbourhood of $K$. Exploiting the fact that $g_n\rightarrow g$ in $\Lpt$ and $g=1\ \mass$-a.e.\ on $A$, $$\limsup_n\int_\XX f_n^2\dd{\mass} = \int_\XX g^2\dd{\mass}.$$ Using the convexity inequality for the slope (e.g.\ \cite[Lemma 1.3.2]{DiM14a}) and arguing as above, we have that
$$
\lip(f_n)\le (1-\eta) \lip(g_n)+\lip(\eta) \abs{g_n-1}
$$
so that
$$\limsup_n \int_\XX\lip(f_n)^2\dd{\mass}\le \limsup_n\int_\XX\lip(g_n)^2\dd{\mass}.$$
All in all, we conclude as $\epsilon>0$ was arbitrary and
\begin{equation*}\notag
\limsup_n\int_\XX f_n^2+ \lip(f_n)^2\dd{\mass}\le \Vert g\Vert_{\HSs}^2\le\capa(K)+\epsilon.\qedhere
\end{equation*}
\end{proof}
\begin{rem}
It is worth pointing out that Lemma \ref{capaconlip} holds also replacing $\lip$ with the bigger $\lip_a$ in \eqref{capaineq}, which is defined by 
$$
\lip_a(f)(x)\defeq\limsup_{y,z\rightarrow x}\frac{\abs{f(y)-f(z)}}{\dist(y,z)},
$$
which has to be understood to be $0$ if $x$ is an isolated point, for any $f$ locally Lipschitz. The proof is exactly the same, if one takes into account the main result of \cite{AmbrosioGigliSavare11-3}.
\fr
\end{rem}
In the framework of $\PI$ spaces, the fact that the total variation of a function of bounded variation is absolutely continuous with respect to the capacity is a consequence of \eqref{diffllxxxeq}. We prove here that this result holds even without any assumption on the m.m.s.
\begin{thm}\label{diffllcapthm}
Let $(\XX,\dist,\mass)$ be a metric measures space and $f\in\BVv$. Then
$$ 
\abs{\DIFF f}\ll\capa.
$$
\end{thm}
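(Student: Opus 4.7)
The strategy is to combine the representation formula of Proposition~\ref{reprfordiffregularpre2} with the capacity-small Lipschitz cutoffs provided by Lemma~\ref{capaconlip}. Since $\abs{\DIFF f}$ is a finite Borel (hence Radon) measure, inner regularity reduces the task to showing $\abs{\DIFF f}(K)=0$ for every compact $K$ with $\capa(K)=0$. Applying the coarea formula \eqref{coareaeqdiff} to the truncations $f_M\defeq (f\wedge M)\vee (-M)\in\Lpi\cap\BV(\XX)$, one has $\abs{\DIFF f_M}(K)\uparrow\abs{\DIFF f}(K)$ as $M\to\infty$, so we may further assume $f\in\Lpi$.

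Fix a bounded open $A\supseteq K$, a bounded open $A_0\supseteq K$, and a cutoff $\phi\in\LIPbs(\XX)$ with $0\le\phi\le 1$, $\phi\equiv 1$ on a neighbourhood of $K$, and $\supp\phi\subseteq A_0$. Given $\epsilon>0$, Lemma~\ref{capaconlip} produces $g\in\LIPbs(\XX)$ with $g\ge 1$ on a neighbourhood of $K$ and $\int g^2+\lip(g)^2\dd\mass\le\epsilon$; truncating $g$ to $[0,1]$ and setting $\eta\defeq\phi g$, a Leibniz-type computation for $\lip$ (as in the proof of Lemma~\ref{capaconlip}) yields $\eta\in\LIPbs(\XX)$ with $0\le\eta\le 1$, $\eta\equiv 1$ on some open $V\supseteq K$, $\supp\eta\subseteq A_0$, and $\Vert\eta\Vert_{\HSs}^2\le C\epsilon$ with $C$ depending only on $\phi$. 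For any $v\in\mathcal{W}_A$, set $v'\defeq (1-\eta)v$: the calculus rule \eqref{calcdiveeq} gives $v'\in D(\dive^\infty)$ with $|v'|\le 1$, and since $v'\equiv 0$ on $V$ while $\supp v'\subseteq\supp v\Subset A$, the compactness of $K\subseteq V$ yields $\dist(\supp v',K)>0$; therefore $\supp v'\Subset A\setminus K$, so that $v'\in\mathcal{W}_{A\setminus K}$.

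Using linearity of $\dive$ and \eqref{calcdiveeq},
\begin{equation*}
\int_\XX f\dive v\dd\mass-\int_\XX f\dive v'\dd\mass=\int_\XX f\dive(\eta v)\dd\mass=\int_\XX f\,\diff\eta(v)\dd\mass+\int_\XX f\eta\,\dive v\dd\mass,
\end{equation*}
and Cauchy--Schwarz, using $|v|\le 1$, $\supp\eta\subseteq A_0$ with $\mass(A_0)<\infty$, and $f\in\Lpi$, gives
\begin{equation*}
\left|\int_\XX f\,\diff\eta(v)\dd\mass\right|\le\Vert f\Vert_{\Lpi}\mass(A_0)^{1/2}\Vert\eta\Vert_{\HSs},\quad\left|\int_\XX f\eta\,\dive v\dd\mass\right|\le\Vert\dive v\Vert_{\Lpi}\Vert f\Vert_{\Lpi}\mass(A_0)^{1/2}\Vert\eta\Vert_{\HSs}.
\end{equation*}
For \emph{fixed} $v\in\mathcal{W}_A$ both bounds tend to $0$ as $\epsilon\to 0$, so combining with Proposition~\ref{reprfordiffregularpre2} applied in $A\setminus K$ gives $\int_\XX f\dive v\dd\mass\le\abs{\DIFF f}(A\setminus K)$. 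Taking the supremum over $v\in\mathcal{W}_A$ yields $\abs{\DIFF f}(A)\le\abs{\DIFF f}(A\setminus K)$, whence $\abs{\DIFF f}(K)=0$ as required. The main subtlety is that the error bound is \emph{not} uniform in $v$ (it depends on $\Vert\dive v\Vert_{\Lpi}$, which is unbounded as $v$ varies in $\mathcal{W}_A$); this is circumvented by fixing $v$ and sending $\epsilon\to 0$ first, and only afterwards taking the supremum over $v$.
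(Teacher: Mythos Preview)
Your proof is correct and follows essentially the same strategy as the paper's: reduce to bounded $f$ and compact $K$ with $\capa(K)=0$, then for a fixed test vector field split $v=(1-\eta)v+\eta v$ using a Lipschitz cutoff $\eta$ with small $\HSs$-norm from Lemma~\ref{capaconlip}, apply Proposition~\ref{reprfordiffregularpre2} to the first piece over $A\setminus K$, and show the second piece vanishes in the limit before taking the supremum over $v$. The paper does this with $A=\XX$ directly (just requiring $\supp v$ bounded) and uses the $\varphi_n$ from Lemma~\ref{capaconlip} without your extra localizing factor $\phi$, but these are cosmetic differences.
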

\begin{proof}
First notice that thanks to \eqref{coareaeqdiff} and the regularity of $\abs{\DIFF f}$, we can reduce ourselves to prove that $\abs{\DIFF f}(K)=0$ whenever $K\subseteq\XX$ is a compact set with $\capa(K)=0$ and assuming also $f\in\BVv\cap\Lpi$. Thanks to Lemma \ref{capaconlip}, we can take a sequence $\{\varphi_n\}_n\subseteq\LIPbs(\XX)$ such that $\varphi_n(x)\in[0,1]$ for every $x\in\XX$,  $\varphi_n(x)=1$ on a neighbourhood of $K$ (this neighbourhood depends on $n$) and $\Vert \varphi_n\Vert_{\HSs}\rightarrow 0$.

Take $v\in D(\dive^\infty)$ with $ \abs{v}\le 1\ \mass\text{-a.e.\ and\ } \supp v$ bounded. Consider now 
$${\int_\XX f\dive\, v \dd{\mass}}=\int_\XX f\dive(\varphi_ n v) \dd{\mass}+\int_\XX f\dive((1-\varphi_ n)v) \dd{\mass} $$
and notice that, by the calculus rules for the divergence in \eqref{calcdiveeq} (recall that we are assuming $f\in\Lpt$),
$$ \int_\XX f\dive(\varphi_ n v) \dd{\mass}\rightarrow 0\quad\text{as }n\rightarrow \infty$$
and also that, by Proposition \ref{reprfordiffregularpre2},
$$\abs{\int_\XX f\dive((1-\varphi_ n)v) \dd{\mass} }\le \abs{\DIFF f}(\XX\setminus K) $$
as $\supp ((1-\varphi_ n)v)\Subset \XX\setminus K. $
If we let $n\rightarrow\infty$ and then take the supremum among all $v$ as above, we have, by Proposition \ref{reprfordiffregularpre2},
$$\abs{\DIFF f}(\XX)\le  \abs{\DIFF f}(\XX\setminus K),$$
which proves our claim.
\end{proof}
\subsection{Vector valued functions of bounded variation}
In what follows we fix $n\in\NN$, $n\ge 1$.
We treat now the case of vector valued $\BV$ functions, i.e.\ functions of bounded variation taking values in $\RR^n$, or equivalently, collections of $n$ real valued functions of bounded variation. As the case $n=1$ has already been treated, we focus on $n\in\NN$, $n\ge 2$.
\begin{defn}\label{def:defBVv}
	Let $(\XX,\dist,\mass)$ be a metric measure space and $F\in\Lpu^n$.
	We define, for any $A$ open subset of $\XX$,
	\begin{equation}\label{defntvvector}
		\abs{\DIFF F}(A)\defeq \inf \left\{\liminf_k \int_A \Vert (\lip(F_{i,k}))_{i=1,\dots,n}\Vert_e\dd{\mass}\right\}
	\end{equation}
	where the infimum is taken among all sequences $\{F_{i,k}\}_k\subseteq\LIPloc(A)$ such that $F_{i,k}\rightarrow F_i$ in $\Lp^1(A,\mass)$ for every $i=1,\dots,n$.  If $\abs{\DIFF F }(\XX)<\infty$, we say that $F\in\BV(\XX)^n$.
\end{defn}
\begin{rem}\label{whattorelax}
	Notice that we are relaxing the integral of the Euclidean norm of the vector whose components are the local Lipschitz constants of the various coordinates, not the local Lipschitz constant of a vector valued function. The former approach follows \cite{AFP00}, while the latter (a slight variant of the one in) \cite{MIRANDA2003}. For open subsets of $\RR^d$ the former approach corresponds to the relaxation of the integral of the Hilbert-Schmidt norm  of the Jacobian matrix of a sequence of approximating functions, while the latter employs the operator norm instead, and is seen to be equivalent to the one proposed in \cite{ambmetric}. Also, it is straightforward to show that $F\in\BVv^n$ if and only if the quantity defined in \eqref{defntvvector} for $\abs{\DIFF F}(\XX)$ is finite.\fr
\end{rem}
\begin{prop}
	Let $(\XX,\dist,\mass)$ be a metric measure space and $F\in\BVv^n$. Then $ \abs{\DIFF F}(\,\cdot\,)$ as defined in \eqref{defntvvector} is the restriction to open sets of a finite non negative Borel measure that we call total variation of $F$ and still denote with the same symbol.
\end{prop}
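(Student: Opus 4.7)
The plan is to apply the De Giorgi--Letta extension criterion, which promotes a set function $\lambda$ on the open subsets of a metric space to a Borel measure as soon as it is monotone, finitely subadditive, superadditive on disjoint opens, and inner regular by compactly contained opens. Setting $\lambda(A)\defeq \abs{\DIFF F}(A)$ as in \eqref{defntvvector}, I would verify these four properties in turn; finiteness of the resulting measure would then follow from the standing assumption $F\in\BV(\XX)^n$, i.e.\ $\lambda(\XX)<\infty$.

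Monotonicity $\lambda(A)\le\lambda(B)$ for $A\subseteq B$ is immediate, since any sequence $\{F_{i,k}\}_k\subseteq\LIPloc(B)$ admissible for $\lambda(B)$ restricts to one admissible for $\lambda(A)$ and integration over the smaller set shrinks the functional. Superadditivity on disjoint opens, $\lambda(A_1\sqcup A_2)\ge \lambda(A_1)+\lambda(A_2)$, is equally direct: any admissible sequence on the union splits into admissible sequences on each piece, with the integrals decomposing additively.

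Finite subadditivity $\lambda(A_1\cup A_2)\le\lambda(A_1)+\lambda(A_2)$ is the substantial step and is handled by a gluing construction. Fix $\varepsilon>0$ and near-optimal approximating sequences $\{F^\alpha_{i,k}\}_k\subseteq\LIPloc(A_\alpha)$ for $\lambda(A_\alpha)$, $\alpha=1,2$. After a preliminary reduction exhausting $A_1\cup A_2$ by such configurations, choose an open set $A'_1$ with $A_1\setminus A_2\subseteq A'_1\Subset A_1$ and a cutoff $\eta\in\LIPbs(\XX)$ satisfying $\eta\equiv 1$ in a neighbourhood of $A'_1$, $0\le\eta\le 1$, $\supp\eta\Subset A_1$, so that $\supp\lip(\eta)\subseteq A_1\cap A_2$. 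Defining $F^*_{i,k}\defeq\eta F^1_{i,k}+(1-\eta)F^2_{i,k}$ on $A_1\cup A_2$, the componentwise slope inequality
\[
\lip(F^*_{i,k})\le \eta\,\lip(F^1_{i,k})+(1-\eta)\,\lip(F^2_{i,k})+\lip(\eta)\,\abs{F^1_{i,k}-F^2_{i,k}}
\]
combined with Minkowski's inequality for the Euclidean norm on $\RR^n$ yields
\[
\Vert(\lip(F^*_{i,k}))_i\Vert_e\le \eta\Vert(\lip(F^1_{i,k}))_i\Vert_e+(1-\eta)\Vert(\lip(F^2_{i,k}))_i\Vert_e+\lip(\eta)\Vert F^1_k-F^2_k\Vert_e.
\]
Integrating over $A_1\cup A_2$ and using that $F^1_k,F^2_k\to F$ in $\Lp^1$ on $A_1\cap A_2$, the cross term vanishes as $k\to\infty$, and one obtains $\lambda(A_1\cup A_2)\le\lambda(A_1)+\lambda(A_2)+\varepsilon$. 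Inner regularity $\lambda(A)=\sup\{\lambda(A'):A'\Subset A\text{ open}\}$ is then obtained by exhausting $A$ by $A_j\Subset A_{j+1}\Subset A$ with $\bigcup_jA_j=A$ and running an analogous gluing between a near-optimal approximation for $\lambda(A_j)$ and a fixed approximation of $F$ on the collar $A\setminus\overline{A}_{j-1}$; the collar contribution vanishes as $j\to\infty$ by the monotonicity and subadditivity already established, together with $\lambda(\XX)<\infty$.

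The main obstacle is the gluing step itself: the Euclidean coupling across the $n$ components in \eqref{defntvvector} precludes a straightforward componentwise reduction to the scalar representation (cf.\ Proposition \ref{reprfordiffregularpre2}), so one has to invoke Minkowski's inequality for the Euclidean norm on $\RR^n$ at each passage in order to keep the quadratic-norm coupling intact. Once this is done, De Giorgi--Letta produces a unique Borel measure on $\XX$ agreeing with $\lambda$ on open sets, which is then finite by $F\in\BV(\XX)^n$.
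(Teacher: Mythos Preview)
Your approach is correct and essentially the same as the paper's. The paper simply remarks that the proof of \cite[Lemma 5.2]{ADM2014} adapts verbatim once one combines the scalar convexity inequality for the slope with the triangle inequality for the Euclidean norm on $\RR^n$, which is exactly the mechanism you isolate in your gluing step (your ``Minkowski'' observation $\Vert(\lip(F^*_{i,k}))_i\Vert_e\le \eta\Vert(\lip(F^1_{i,k}))_i\Vert_e+(1-\eta)\Vert(\lip(F^2_{i,k}))_i\Vert_e+\lip(\eta)\Vert F^1_k-F^2_k\Vert_e$).
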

\begin{proof}
	The proof of \cite[Lemma 5.2]{ADM2014} can be easily adapted with no substantial changes. 
	Indeed, one has only to notice that the convexity inequality for the slope used in \cite[Lemma 5.4]{ADM2014} and the properties of the Euclidean norm imply a suitable version of the convexity inequality for the slope in our situation. 
\end{proof}

\section{The theory for RCD spaces}
\subsection{Some useful results}
	The proof of the following result can be found in \cite[Remark 3.5]{GigliHan14}, we briefly sketch it here for the sake of completeness.
\begin{prop}[Bakry-\'{E}mery estimate in BV]\label{bakryemry}
	Let $(\XX,\dist,\mass)$ be an $\RCD(K,\infty)$ space and $f\in\BVv$. Then, if $t>0$, $\heat_t f\in\BVv$ and it holds
	\begin{equation}\notag
		\abs{\DIFF \heat_t f}\le e^{-K t}\heat_t\abs{\DIFF f} .
	\end{equation}
	If moreover $f\in\BVv\cap\Lpi$, then $\heat_t f\in\BVv\cap\HSs$ and 
	\begin{equation}\notag
		\abs{\nabla \heat_t f}\le e^{-K t}\heat_t\abs{\DIFF f}\quad\mass\text{-a.e.}
	\end{equation}
\end{prop}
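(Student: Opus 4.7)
The plan is to lift the Sobolev-level Bakry-\'Emery estimate $\abs{\nabla\heat_t g}\le e^{-Kt}\heat_t\abs{\nabla g}$ (valid $\mass$-a.e.\ for $g\in\HSs$ on $\RCD(K,\infty)$ spaces) to $\BV(\XX)$ by approximation, combining the weak convergence of slope measures recalled after \eqref{deftv} with the self-adjointness of the heat flow. First I pick $\{f_k\}_k\subseteq\LIPbs(\XX)$ with $f_k\to f$ in $\Lpu$ and $\lip(f_k)\mass\rightharpoonup\abs{\DIFF f}$ in duality with $\Cb(\XX)$; each $f_k\in\HSs\cap\Lpt$ satisfies $\abs{\nabla f_k}\le\lip(f_k)$ $\mass$-a.e., so $\heat_t f_k\in\HSs$ and the Sobolev estimate gives
\[
\abs{\nabla\heat_t f_k}\le e^{-Kt}\heat_t\lip(f_k)\qquad\mass\text{-a.e.}
\]

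Next, I would test against an arbitrary $\varphi\in\Cb(\XX)$ with $\varphi\ge 0$. Since $\heat_t f_k\in\HSs$ implies $\abs{\DIFF\heat_t f_k}=\abs{\nabla\heat_t f_k}\mass$, and since the $\Lp^\infty$-$\LIP$ regularization on $\RCD(K,\infty)$ spaces ensures $\heat_t\varphi\in\Cb(\XX)$, self-adjointness of $\heat_t$ yields
\[
\int_\XX\varphi\dd{\abs{\DIFF\heat_t f_k}}\le e^{-Kt}\int_\XX\varphi\,\heat_t\lip(f_k)\dd{\mass}=e^{-Kt}\int_\XX\heat_t\varphi\cdot\lip(f_k)\dd{\mass}.
\]
As $k\to\infty$, the weak convergence tested against $\heat_t\varphi\in\Cb(\XX)$ makes the right-hand side tend to $e^{-Kt}\int\heat_t\varphi\dd{\abs{\DIFF f}}=e^{-Kt}\int\varphi\dd{\heat_t\abs{\DIFF f}}$, the last equality being \eqref{heatmeassposta} (extended to finite Borel measures). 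On the left, the $\Lpu$-continuity of $\heat_t$ gives $\heat_t f_k\to\heat_t f$ in $\Lpu$, and combining the layer-cake identity $\int\varphi\dd{\mu}=\int_0^\infty\mu(\{\varphi>s\})\dd{s}$ with Fatou's lemma and the lower semicontinuity of the total variation on open sets yields
\[
\int_\XX\varphi\dd{\abs{\DIFF\heat_t f}}\le\liminf_k\int_\XX\varphi\dd{\abs{\DIFF\heat_t f_k}};
\]
in particular $\heat_t f\in\BVv$.

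Putting these together, $\int\varphi\dd{\abs{\DIFF\heat_t f}}\le e^{-Kt}\int\varphi\dd{\heat_t\abs{\DIFF f}}$ for every non-negative $\varphi\in\Cb(\XX)$, which upgrades to $\abs{\DIFF\heat_t f}\le e^{-Kt}\heat_t\abs{\DIFF f}$ as Borel measures by outer regularity of the two finite measures involved. For the second assertion, $f\in\Lpi$ forces $\heat_t f\in\Lpi$ and the $\Lp^\infty$-$\LIP$ regularization puts $\heat_t f$ in $\LIPb(\XX)$; together with $\heat_t f\in\Lpu$ this gives $\heat_t f\in\HSs$, so $\abs{\DIFF\heat_t f}=\abs{\nabla\heat_t f}\mass$ and the measure estimate becomes the claimed $\mass$-a.e.\ pointwise inequality. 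The main delicate point is keeping the simultaneous limit compatible: the $\Lp^\infty$-$\LIP$ regularization guarantees $\heat_t\varphi\in\Cb(\XX)$, which is precisely what one needs in order to couple the weak convergence $\lip(f_k)\mass\rightharpoonup\abs{\DIFF f}$ on the right with the lower semicontinuity of the total variation on the left, both operating in duality with $\Cb(\XX)$.
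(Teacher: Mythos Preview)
Your argument for the measure inequality $\abs{\DIFF\heat_t f}\le e^{-Kt}\heat_t\abs{\DIFF f}$ is sound and close in spirit to the paper's: both pick an optimal sequence $f_k\in\LIPbs(\XX)$, apply the Sobolev Bakry--\'Emery bound $\abs{\nabla\heat_t f_k}\le e^{-Kt}\heat_t\lip(f_k)$, and pass to the limit using $\lip(f_k)\mass\rightharpoonup\abs{\DIFF f}$. The difference is organizational: the paper proves the \emph{pointwise} estimate for bounded $f$ first and deduces the measure estimate from it, whereas you go in the opposite direction.

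That reversal creates a genuine gap in your deduction of the second assertion. To pass from $\abs{\DIFF\heat_t f}\le e^{-Kt}\heat_t\abs{\DIFF f}$ to $\abs{\nabla\heat_t f}\le e^{-Kt}\heat_t\abs{\DIFF f}$ $\mass$-a.e.\ you invoke $\abs{\DIFF\heat_t f}=\abs{\nabla\heat_t f}\mass$, but only the inequality $\abs{\DIFF\heat_t f}\le\abs{\nabla\heat_t f}\mass$ is immediate (and that is the wrong direction for your purpose). The reverse inequality $\abs{\nabla\heat_t f}\mass\le\abs{\DIFF\heat_t f}$ is a nontrivial identification of $1$- and $2$-relaxed slopes that you have not justified. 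Relatedly, ``$\heat_t f\in\LIPb(\XX)\cap\Lpu$ gives $\heat_t f\in\HSs$'' is false as stated on spaces of infinite measure (a nonzero constant Lipschitz bound need not lie in $\Lpt$); the correct reason is that $f\in\Lpu\cap\Lpi\subseteq\Lpt$ and the heat flow regularizes $\Lpt\to\HSs$.

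The paper sidesteps both issues. Working with bounded $f$ and $f_k$ (so $\|f_k\|_{\Lpt}$ is controlled), the sequence $\{\abs{\nabla\heat_t f_k}\}_k$ is bounded in $\Lpt$; since $\heat_t f_k\to\heat_t f$ in $\Lpt$, lower semicontinuity of the minimal relaxed slope gives $\abs{\nabla\heat_t f}\le G$ for any $\Lpt$-weak limit $G$ of $\abs{\nabla\heat_t f_k}$, and one then compares $G$ with the density of $e^{-Kt}\heat_t\abs{\DIFF f}$. This yields the pointwise bound directly, and the measure bound follows from the easy inequality $\abs{\DIFF\heat_t f}\le\abs{\nabla\heat_t f}\mass$ together with truncation and lower semicontinuity. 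If you want to keep your order of proof, you should either cite a result giving $\abs{\DIFF g}=\abs{\nabla g}\mass$ for $g\in\HSs\cap\BVv$ on $\RCD(K,\infty)$ spaces, or insert the paper's $\Lpt$-weak-limit argument to obtain the pointwise bound independently.
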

\begin{proof}
	First notice that by the general theory of Sobolev spaces, we easily obtain that $\abs{\DIFF \heat_t f}\le\abs{\nabla \heat_t f}\mass$ if $f\in\BVv\cap\Lpi$.
	Then, thanks to the lower semicontinuity of the total variation and a truncation argument, the first statement follows from the second.
	
	In order to conclude, take a sequence $\{f_k\}_k\subseteq\LIPbs(\XX)$ with $f_k\rightarrow f$ in $\Lpu$ and $(\lip(f_k)\mass)(\XX)\rightarrow\abs{\DIFF f}(\XX)$ (hence $\lip(f_k)\mass\rightharpoonup\abs{\DIFF f}$ in duality with $\Cb(\XX)$). Clearly, we can assume that $\Vert f_k\Vert_{\Lpi}\le\Vert f\Vert_{\Lpi}$, so that $\heat_t f_k\in\HSs\cap\LIPb(\XX)$ for every $k$, with equibounded Lipschitz constants, by the $\Lp^\infty$-$\LIP$ regularization property.
	Also, by \cite[Corollary 4.3]{Savare13}, we have that for every $k$,
	$$ \abs{\nabla \heat_t f_k}\le e^{-K t}\heat_t\abs{\nabla f_k}\le e^{-K t}\heat_t \lip(f_k)\quad\mass\text{-a.e.}$$
	Then, $\{ \abs{\nabla \heat_t f_k}\}_k\subseteq\Lpt$ is bounded and, as $\abs{\nabla \heat_t f}$ is bounded from above by any $\Lp^2$ weak limit of $\{\abs{\nabla \heat_t f_k}\}_k$, we can conclude easily, recalling that the heat flow on finite measures preserves the weak convergence in duality with $\Cb(\XX)$.
\end{proof}

For the rest of this subsection, we fix $n\in\NN$, $n\ge 1$. The following proposition provides us with a generalization of Proposition \ref{reprfordiffregularpre2} to the multi dimensional case in the context of $\RCD(K,\infty)$ spaces.  First, we need an approximation lemma. Recall also our definition of $\TestV(\XX)$ in \eqref{testVdef}.
\begin{lem}\label{approxHod}
	Let $(\XX,\dist,\mass)$ be an $\RCD(K,\infty)$ space and let $v=(v_1,\dots,v_n)\in \WHHSsn$ with $\abs{v}\le 1\ \mass$-a.e. Then there exists a sequence $\{v^k=(v_1^k,\dots,v_n^k)\}_k\subseteq\TestV(\XX)^n$ such that $|{v^k}|\le  1\ \mass$-a.e.\ for every $k$ and $v^k_i\rightarrow v_i$ in $\WHHSs$ for every $i=1,\dots,n$.
\end{lem}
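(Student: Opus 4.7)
The plan is to first approximate each component $v_i$ separately by test vector fields (losing the pointwise bound $\abs{v}\le 1$) and then restore the bound by multiplying the whole vector by a single scalar cutoff belonging to $\Ss\cap\Lpi$, so that the product with each $w_i^k$ stays in $\TestV(\XX)$.

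For the componentwise part, I would use the standard density of $\TestV(\XX)$ in $\WHHSs$ together with a diagonal extraction to obtain $\{w_i^k\}_k\subseteq\TestV(\XX)$ with $w_i^k\to v_i$ in $\WHHSs$ and $\mass$-a.e.\ for every $i$, so in particular $\abs{w^k}^2\defeq \sum_i \abs{w_i^k}^2\to \abs{v}^2\le 1$ $\mass$-a.e. To enforce the norm bound, I would introduce the Lipschitz cutoff $\psi:[0,\infty)\to[0,1]$ defined by $\psi(s)=1$ for $s\le 1$ and $\psi(s)=s^{-1/2}$ for $s>1$, and set $\phi_k\defeq \psi(\abs{w^k}^2)$. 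Since each $w_i^k\in\TestV(\XX)$, the function $\abs{w^k}^2$ is a finite sum of products of elements of $\Ss\cap\Lpi$ with terms $\nabla g_j\,\cdot\,\nabla g_l$ for $g_j,g_l\in\TestF(\XX)$, all of which lie in $\Ss\cap\Lpi$. By the Lipschitz chain rule, $\phi_k\in\Ss\cap\Lpi$ with $\phi_k\in[0,1]$, and so $v_i^k\defeq \phi_k w_i^k$ lies in $\TestV(\XX)$ and satisfies $\abs{v^k}^2=\psi(\abs{w^k}^2)^2\abs{w^k}^2\le 1$ $\mass$-a.e.

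For the convergence $v_i^k\to v_i$ in $\WHHSs$, I would split $v_i^k-v_i=\phi_k(w_i^k-v_i)+(\phi_k-1)v_i$ and handle the $\Lpt$-part by dominated convergence, using $\phi_k\in[0,1]$, $\phi_k\to 1$ $\mass$-a.e., and $w_i^k\to v_i$ in $\Lpt$. For the Hodge-Sobolev part, I would apply the Leibniz rules $\diff(\phi_k w_i^k)=\phi_k\diff w_i^k+\diff \phi_k\wedge w_i^k$ and, via Lemma \ref{calculusdive}, $\dive(\phi_k w_i^k)=\phi_k\dive w_i^k+\diff \phi_k(w_i^k)$. The principal terms $\phi_k\diff w_i^k$ and $\phi_k\dive w_i^k$ converge to $\diff v_i$ and $\dive v_i$ in $\Lpt$ by the same splitting argument. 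The correction terms vanish on $\{\abs{w^k}^2\le 1\}$ (where $\phi_k\equiv 1$ and hence $\diff \phi_k=0$), and on the complement admit the pointwise bound $\abs{\diff \phi_k}\le \tfrac{1}{2}\abs{w^k}^{-3}\abs{\diff \abs{w^k}^2}$ inherited from $\psi'(s)=-\tfrac12 s^{-3/2}$ on $(1,\infty)$.

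The main obstacle I expect is making this last step rigorous: one has to combine the $\mass$-a.e.\ vanishing of the correction terms with uniform $\WHHSs$-bounds on $w_i^k$ (and the resulting bound on $\diff\abs{w^k}^2$) to deduce $\Lpt$-convergence to zero via an equi-integrability or dominated-convergence argument. This uses the hypothesis $\abs{v}\le 1$ $\mass$-a.e.\ in an essential way, since it forces $\mass(\{\abs{w^k}^2>1\}\cap K)\to 0$ on every set $K$ on which the correction terms are dominated by an $\Lpu$ function.
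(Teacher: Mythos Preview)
Your strategy is essentially the same as the paper's: approximate each component by test vector fields and then renormalize by a scalar factor in $\Ss\cap\Lpi$ to restore the bound $|v^k|\le 1$. The difference is that you renormalize with the cutoff $\phi_k=\psi(|w^k|^2)=\big(1\vee|w^k|\big)^{-1}$, whereas the paper uses
\[
\frac{1}{(1+\epsilon)\vee |w^k|}\qquad\text{for a small }\epsilon>0,
\]
obtains $v^{k,\epsilon}\to\frac{1}{1+\epsilon}v$ in $\WHHSs$ for each fixed $\epsilon$, and then diagonalizes over $\epsilon\searrow 0$.

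This $\epsilon$ is not cosmetic; it is exactly what makes the ``correction term'' argument close. Your final paragraph asserts that $|v|\le 1$ forces $\mass(\{|w^k|^2>1\}\cap K)\to 0$. This is false in general: if $|v|=1$ on a set $E$ of positive measure, one can have $|w^k|>1$ on most of $E$ for every $k$ (nothing in $\WHHSs$-convergence prevents a systematic overshoot by $1/k$, say), so $\{|w^k|>1\}$ need not shrink at all. Consequently neither the ``$\mass$-a.e.\ vanishing'' nor the dominated/equi-integrability argument you sketch goes through for your $\phi_k$: the correction term $\nabla\phi_k\wedge w^k_i$ is supported on $\{|w^k|>1\}$ and there satisfies only $|\nabla\phi_k||w^k_i|\lesssim \chi_{\{|w^k|>1\}}\big(\sum_j|\nabla w^k_j|^2\big)^{1/2}$, which has no reason to tend to $0$ in $\Lpt$.

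With the paper's choice the correction is supported on $A^{k,\epsilon}=\{|w^k|>1+\epsilon\}$, and now $|v|\le 1<1+\epsilon$ does force $\mass(A^{k,\epsilon})\to 0$. Since $\big(\sum_j|\nabla w^k_j|^2\big)^{1/2}$ converges in $\Lpt$ (via the $\RCD$ inequality $\|\nabla X\|_{\Lpt}^2\le\|\diff X\|_{\Lpt}^2+\|\dive X\|_{\Lpt}^2-K\|X\|_{\Lpt}^2$, which bounds the covariant derivative by the $\WHHSs$-norm), one gets $\chi_{A^{k,\epsilon}}\big(\sum_j|\nabla w^k_j|^2\big)^{1/2}\to 0$ in $\Lpt$. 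Everything else in your write-up (that $|w^k|^2\in\Ss\cap\Lpi$, that $\phi_k\in\Ss\cap\Lpi$, the Leibniz rules from Lemma \ref{calculushodge}, the splitting $\phi_k(w^k_i-v_i)+(\phi_k-1)v_i$) is correct; you only need to replace the threshold $1$ by $1+\epsilon$ and add the diagonal step.
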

\begin{proof}
	By the very definition of $\WHHSs$, for every $i=1,\dots,n$, we have a sequence $\{w^k_i\}\subseteq\TestV(\XX)^n$ with $w^k_i\rightarrow v_i$ in $\WHHSs$.
	Set then, for $\epsilon>0$, $$v^{k,\epsilon}_i\defeq \frac{1}{(1+\epsilon)\vee\sqrt{\sum_j |{w^k_j}|^2}} w^k_i.$$
	and finally  $v^{k,\epsilon}\defeq (v^{k,\epsilon}_1,\dots,v^{k,\epsilon}_n)$. It is clear that $|{v^{k,\epsilon}}|\le 1\ \mass$-a.e.\ so that, using also a diagonal argument, it suffices to show that for every $i=1,\dots,n$, 
	$$  v^{k,\epsilon}_i\rightarrow \frac{1}{1+\epsilon} v_i\quad\text{in }\WHHSs\text{ as }k\rightarrow\infty.$$
	
	Fix then $i=1,\dots,n$ and $\epsilon>0$. It is clear that $$ v^{k,\epsilon}_i\rightarrow \frac{1}{1+\epsilon} v_i\quad\text{in }\tanX\text{ as }k\rightarrow\infty.$$ By the calculus rules in Lemma \ref{calculushodge} below, we just have to show that 
	\begin{equation}\notag
		\abs{\nabla\frac{1}{(1+\epsilon)\vee\sqrt{\sum_j |{w^k_j}|^2}} }|{w^k_i}|\rightarrow0\quad\text{in }\Lpt\text{ as }k\rightarrow\infty.
	\end{equation}
	Set now $A^{k,\epsilon}\defeq\left\{\sqrt{\sum_j |{w^k_j}|^2}>1+\epsilon \right\}$ and notice that $A^{k,\epsilon}\rightarrow \emptyset$ in $\Lpo$ as $k\rightarrow\infty$. 
	Using the calculus rules, we can estimate
	\begin{equation}\notag
		\begin{split}
			&\abs{\nabla\frac{1}{(1+\epsilon)\vee\sqrt{\sum_j |{w^k_j}|^2}} }|{w^k_i}|\\
			&\qquad\le\frac{1}{2}\chi_{A^{k,\epsilon}}\left(\frac{1}{\sum_j |{w^k_j}|^2}\right)^{3/2}\bigg|{\nabla \sum_j |{w^k_j}|^2}\bigg|
			|{w^k_i}|\\&\qquad\le 	\chi_{A^{k,\epsilon}}\left(\frac{1}{\sum_j |{w^k_j}|^2}\right)^{3/2} \left( \sum_j|{w^k_j}|^2\right)^{1/2}\left( \sum_j|\nabla{w^k_j}|^2\right)^{1/2} |{w^k_i}|
			\\&\qquad\le\chi_{A^{k,\epsilon}} \left(\frac{1}{1+\epsilon}\right)^2	 \sum_j|\nabla{w^k_j}||{w^k_i}|,
		\end{split}
	\end{equation}
	where in the second inequality we used the Cauchy-Schwarz inequality,
	and then we see that the last term converges to $0$ in $\Lpt$ as $k\rightarrow\infty$.
\end{proof}
In the previous proof we used the following calculus rules, which are an immediate consequence of the already known ones proved in \cite{Gigli14}. We add also another lemma, again based on \cite{Gigli14}, which is not explicitly used in this work but whose proof grants coincidence between the definitions of $\WHCSs$ via the usual definition of test vector fields and our definition of test vector field.
	\begin{lem}\label{calculushodge}
	Let $(\XX,\dist,\mass)$ be an $\RCD(K,\infty)$ space, $X\in\WSHs\cap\tanXinf$ and $f\in \mathrm{S}^2(\XX)\cap\Lpi$. Then $f X\in \WSHs$ and 
	\begin{alignat}{2}\notag
		\dive (f X)&=\nabla f\,\cdot\, X+f\dive\, X,\\\notag
		\diff (f X)&=\nabla f\wedge X+f\diff X.
	\end{alignat}
	If moreover $X\in\WHHSs$, then $f X\in \WHHSs$.
\end{lem}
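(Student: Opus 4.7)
The overall strategy is to deduce the two Leibniz identities from their known counterparts for $f \in \TestF(\XX)$ proved in \cite{Gigli14}. The divergence identity will follow directly from Definition \ref{divedefn}, while the differential identity will be obtained by approximating $f$ with test functions; the closing $\WHHSs$ statement will instead require approximating $X$.

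For the divergence, I would proceed by direct verification. Fix $g\in\HSs$ with bounded support; since $f\in\mathrm{S}^2(\XX)\cap\Lpi$ and $g$ is bounded Sobolev with bounded support, the product $fg$ belongs to $\HSs$ with bounded support and satisfies the Leibniz rule $\nabla(fg)=f\nabla g+g\nabla f$. Pairing with $X\in D(\dive)$ then gives
$$\int \nabla g\,\cdot\,(fX)\dd{\mass}=\int \nabla(fg)\,\cdot\,X\dd{\mass}-\int g(\nabla f\,\cdot\,X)\dd{\mass}=-\int g\bigl(f\dive X+\nabla f\,\cdot\,X\bigr)\dd{\mass},$$
and the integrand on the right belongs to $\Lpt$ thanks to $f\in\Lpi$, $\dive X\in\Lpt$, $\nabla f\in\tanX$ and $X\in\tanXinf$. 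Definition \ref{divedefn} then yields $fX\in D(\dive)$ with the announced identity.

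For the differential, I would first handle the case $f\in\HSs\cap\Lpi$ by approximating via a sequence $\{f_k\}\subseteq\TestF(\XX)$ with $f_k\to f$ in $\HSs$ and $\sup_k\Vert f_k\Vert_{\Lpi}\le\Vert f\Vert_{\Lpi}$ (the bound recalled just after the definition of $\TestV(\XX)$). Invoking the $\TestF$-level Leibniz rule from \cite{Gigli14} gives $\diff(f_k X)=\nabla f_k\wedge X+f_k\diff X$; since $X\in\tanXinf$, dominated convergence on each summand together with the closedness of $\diff$ on $\WSHs$ yield $fX\in\WSHs$ with the formula. The general case $f\in\mathrm{S}^2(\XX)\cap\Lpi$ then follows by a cutoff argument: multiplying by Lipschitz cutoffs $\eta_R$ (equal to $1$ on $B_R$, supported in $B_{2R}$) to reduce to $f\eta_R\in\HSs\cap\Lpi$, applying the previous step, and passing $R\to\infty$ using the divergence identity already proved and closedness of $\diff$.

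For the final assertion, given $X\in\WHHSs\cap\tanXinf$ I would rescale $X$ so that $|X|\le 1$ and apply Lemma \ref{approxHod} (in its $n=1$ case) to obtain $X^k\in\TestV(\XX)$ with $|X^k|$ uniformly bounded and $X^k\to X$ in $\WSHs$. Since $\mathrm{S}^2(\XX)\cap\Lpi$ is an algebra and $\TestV(\XX)$ is stable under multiplication by elements of $\Ss\cap\Lpi$ (as stated in the text), $fX^k\in\TestV(\XX)\subseteq\WHHSs$. Combining the uniform $\tanXinf$-bound on $X^k$ with dominated convergence in the two Leibniz identities already proved shows $fX^k\to fX$ in $\WSHs$, and $\WHHSs$-closedness concludes. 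The step I expect to be the main obstacle is the localization argument needed to pass from $f\in\HSs\cap\Lpi$ to $f\in\mathrm{S}^2(\XX)\cap\Lpi$, together with the need to secure uniform $\tanXinf$-control on the approximating test vector fields so that cross-terms such as $\nabla f\,\cdot\,X^k$ actually converge in $\Lpt$ rather than just in $\Lpu$.
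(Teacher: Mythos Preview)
Your argument for the divergence identity and the passage from $\HSs\cap\Lpi$ to $\Ss\cap\Lpi$ via cutoffs is fine and matches the paper. There is, however, a genuine circularity in your final step: you invoke Lemma~\ref{approxHod} to obtain test vector fields $X^k$ with a uniform $\tanXinf$-bound converging to $X$, but the proof of Lemma~\ref{approxHod} explicitly relies on the present Lemma~\ref{calculushodge} (see the phrase ``By the calculus rules in Lemma~\ref{calculushodge} below'' in its proof). So your $\WHHSs$-preservation argument cannot stand as written.

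The paper avoids this by approximating $f$ rather than $X$, and by propagating the $\WHHSs$ claim through the same chain of approximations used for the Leibniz identities. First, for $f\in\TestF(\XX)$ one approximates $X\in\WHHSs$ by $X_n\in\TestV(\XX)$ converging in $\WSHs$; no $\Lpi$ control on $X_n$ is needed here because $f$ is Lipschitz, hence $\nabla f\in\tanXinf$, and the cross-terms $\nabla f\,\cdot\, X_n$, $\nabla f\wedge X_n$ already converge in $\Lpt$. (This step also patches the small gap in your middle paragraph, where you apply ``the $\TestF$-level Leibniz rule from \cite{Gigli14}'' directly to general $X\in\WSHs$, whereas \cite[Proposition~3.5.4]{Gigli14} is stated for $X\in\TestV(\XX)$.) Then for $f\in\HSs\cap\Lpi$ one takes $f_n\in\TestF(\XX)$ with bounded sup-norm; the previous step gives $f_nX\in\WHHSs$, and showing $f_nX\to fX$ in $\WSHs$ (which uses only $X\in\tanXinf$, exactly as you note) yields $fX\in\WHHSs$ by closedness. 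The final cutoff to $\Ss\cap\Lpi$ is as you describe. The key point you are missing is that the $\WHHSs$ membership should be carried along the approximation in $f$, so that one never needs a uniformly bounded approximation of $X$.
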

\begin{proof}
	Recall that $\WSHs$ and $\WHHSs$ are Hilbert spaces. We prove the claim with an approximation argument. Also, as in the discussion after \cite[Definition 3.5.11]{Gigli14}, if $\omega \in\tanX$, then $\omega\in D(\delta)$ if and only if $\omega\in D(\dive)$ and, if this is the case, $\delta\omega=-\dive(\omega)$.
	
	If $f\in\TestF(\XX)$, the claim is a consequence of \cite[Proposition 3.5.4]{Gigli14} (which is stated with a slightly different definition of $\TestV(\XX)$) and the calculus rules for the divergence and the following approximation argument. If $\{X_n\}_n\subseteq\TestV(\XX)$ with $X_n\rightarrow X$ in $\WSHs$, then $f X_n\in\TestV(\XX)$ and $f X_n\rightarrow f X$ in $\WSHs$ (see the next paragraph for more details).
	
	If $f\in\HSs\cap\Lpi$, take $\{f_n\}_n\subseteq\TestF(\XX)$ with $\Vert f_n \Vert_{\Lpi}$ uniformly bounded and $f_n\rightarrow f$ in $\HSs$. Now we can use the calculus rules for $f_n\in\TestF(\XX)$ and easily prove, using also dominated convergence,
	\begin{alignat}{4}\notag
		\dive(f_n X)&=\nabla f_n\,\cdot\,X+f_n\dive\,X\rightarrow \nabla f\,\cdot\,X+f\dive\,X\quad&&\text{in }\Lpt\\\notag
		\diff (f_n X)&=\nabla f_n\wedge X+ f_n\diff X\rightarrow\nabla f\wedge X+ f\diff X\quad&&\text{in }\mathrm{L}^2(\Lambda^2 T\XX).
	\end{alignat}
	This shows that $f X\in\WSHs$, that the calculus rules hold and finally that $f_n X\rightarrow f X$ in $\WSHs$, so that if $X\in\WHHSs$, then $f X\in \WHHSs$.
	
	If $f\in\Ss\cap\Lpi$ we fix $\bar{x}\in\XX$ and we take $\{\phi_n\}_n\subseteq\LIPbs(\XX)$ with $\phi_n(x)\defeq ((n-\dist(x,\bar{x}))\wedge 1)^+$. Similar computations to the ones of the previous paragraph with $f \phi_n$ in place of $f_n$ show that $f X\in\WSHs$, that the calculus rules hold and finally that $(f\phi_n) X\rightarrow f X$ in $\WSHs$, so that if $X\in\WHHSs$, then $f X\in \WHHSs$.
\end{proof}

\begin{lem}\label{calculuscova}
	Let $(\XX,\dist,\mass)$ be an $\RCD(K,\infty)$ space, $X\in\WSCs\cap\tanXinf$ and $f\in \mathrm{S}^2(\XX)\cap\Lpi$. Then $f X\in \WSCs$ and 
	\begin{equation}\notag
		\nabla (f X)=\nabla f\otimes X+f\nabla X.
	\end{equation}
	If moreover $X\in\WHCSs$, then $f X\in \WHCSs$.
\end{lem}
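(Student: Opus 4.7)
The plan is to mirror exactly the strategy used in the proof of Lemma \ref{calculushodge}, replacing the Hodge differential by the covariant one and using the already-established calculus rules for $\nabla$ on test vector fields from \cite{Gigli14}. Throughout, $\WSCs$ (resp.\ $\WHCSs$) is a Hilbert space, so the three-step approximation scheme $\TestF \rightsquigarrow \HSs\cap\Lpi \rightsquigarrow \Ss\cap\Lpi$ will pass the identity to the limit once the right convergences are in place.

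First I would handle the case $f\in\TestF(\XX)$ and $X\in\TestV(\XX)$. For such a pair $fX$ is again a test vector field (since $\TestF(\XX)$ is an algebra and $h\nabla g\in\TestV(\XX)$ for $h\in\Ss\cap\Lpi$, $g\in\TestF(\XX)$), and the identity $\nabla(fX)=\nabla f\otimes X+f\nabla X$ is the standard Leibniz rule for the covariant derivative on test objects established in \cite[Theorem 3.4.2]{Gigli14}. Given an arbitrary $X\in\WSCs\cap\tanXinf$ I would pick an approximating sequence $\{X_n\}_n\subseteq\TestV(\XX)$ with $X_n\to X$ in $\WSCs$; the bound $|fX_n-fX|\le\|f\|_{\Lpi}|X_n-X|$ in $\tanX$, together with $\nabla f\otimes X_n\to\nabla f\otimes X$ and $f\nabla X_n\to f\nabla X$ in $\Lp^2(T^{\otimes 2}\XX)$ (using $f\in\Lpi$ and dominated convergence), yields $fX_n\to fX$ in $\WSCs$ and identifies $\nabla(fX)$.

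Next I would pass from $f\in\TestF(\XX)$ to $f\in\HSs\cap\Lpi$. Pick $\{f_n\}_n\subseteq\TestF(\XX)$ with $f_n\to f$ in $\HSs$ and $\|f_n\|_{\Lpi}\le\|f\|_{\Lpi}$ (this is possible by the density/truncation discussion recalled in Subsection \ref{RCDsubsect}). Writing
\[
\nabla(f_n X)=\nabla f_n\otimes X+f_n\nabla X,
\]
the first summand converges to $\nabla f\otimes X$ in $\Lp^2(T^{\otimes 2}\XX)$ because $\nabla f_n\to\nabla f$ in $\tanX$ and $X\in\tanXinf$, while the second converges to $f\nabla X$ in $\Lp^2(T^{\otimes 2}\XX)$ by dominated convergence since $|f_n|\le\|f\|_{\Lpi}$ and $f_n\to f$ $\mass$-a.e.\ along a subsequence. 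Combined with $f_n X\to fX$ in $\tanX$, this shows $f_n X\to fX$ in $\WSCs$, whence $fX\in\WSCs$ with the asserted Leibniz rule.

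Finally, for $f\in\Ss\cap\Lpi$ I would introduce the cut-offs $\phi_n(x)\defeq((n-\dist(x,\bar x))\wedge 1)^+\in\LIPbs(\XX)$, so that $f\phi_n\in\HSs\cap\Lpi$ with $\|f\phi_n\|_{\Lpi}\le\|f\|_{\Lpi}$ and $\nabla(f\phi_n)=\phi_n\nabla f+f\nabla\phi_n$. Applying the previous step gives $(f\phi_n)X\in\WSCs$ with $\nabla((f\phi_n)X)=\nabla(f\phi_n)\otimes X+(f\phi_n)\nabla X$; dominated convergence (using $|\nabla\phi_n|\le 1$, $X\in\tanXinf$, and $\phi_n\nearrow 1$ pointwise) makes the right-hand side converge in $\Lp^2(T^{\otimes 2}\XX)$ to $\nabla f\otimes X+f\nabla X$, and $(f\phi_n)X\to fX$ in $\tanX$, so $fX\in\WSCs$ with the claimed formula. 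For the last assertion, if $X\in\WHCSs$ then in each step the approximants $X_n$ (resp.\ $fX_n$, $f_nX$, $(f\phi_n)X$) may be chosen inside $\WHCSs$, and since $\WHCSs$ is closed in $\WSCs$ the limit $fX$ lies there as well. I do not expect any single step to be a serious obstacle: the only delicate point is bookkeeping the $\Lp^2$-convergence of $\nabla f_n\otimes X$ and $f_n\nabla X$, which is controlled by the uniform $\Lp^\infty$-bound on $f_n$ and the fact that $X\in\tanXinf$.
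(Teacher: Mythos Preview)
Your proof is correct and follows essentially the same approximation strategy as the paper. The only difference is that the paper collapses your first two steps into one by invoking \cite[Proposition 3.4.5]{Gigli14} directly for $f\in\HSs\cap\Lpi$ (rather than \cite[Theorem 3.4.2]{Gigli14} for test objects and then approximating), so your intermediate passage through $f\in\TestF(\XX)$ is not needed but does no harm.
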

\begin{proof}
	Recall that $\WSCs$ and $\WHCSs$ are Hilbert spaces. We prove the claim with an approximation argument.

	Assume first $f\in \HSs\cap\Lpi$. Then the first part of the statement has been proved in \cite[Proposition 3.4.5]{Gigli14}. The second part follows approximating $X$ with a sequence of test vector fields.
	
	If $f\in\Ss\cap\Lpi$ we fix $\bar{x}\in\XX$ and we take $\{\phi_n\}_n\subseteq\LIPbs(\XX)$ with $\phi_n(x)\defeq ((n-\dist(x,\bar{x}))\wedge 1)^+$ and we set $f_n\defeq f\phi_n$. We can use the calculus rule for $f_n\in\HSs\cap\Lpi$ and easily prove, using also dominated convergence,
	$$ 
	\nabla (f_n X)=\nabla f_n\otimes X+f_n\nabla X\rightarrow\nabla f\otimes X+f\nabla X\quad\text{in }\mathrm{L}^2(T^{\otimes 2}\XX).
	$$
	This shows that $f X\in\WSCs$, that the calculus rule holds and finally that $f_n X\rightarrow f X$ in $\WSCs$, so that if $X\in\WHCSs$, then $f X\in \WHCSs$.
\end{proof}

In view of the following proposition, recall that the interpretation of the integral in \eqref{intbypartsvet} is given by Remark \ref{interpretationint}. 
\begin{prop}\label{reprvett1}
	Let $(\XX,\dist,\mass)$ be an $\RCD(K,\infty)$ space and $F\in\BVv^n$. Then, for every $A$ open subset of $\XX$, it holds that 
	\begin{equation}
		\label{intbypartsvet}
		\abs{\DIFF F}(A)=\sup\left\{\sum_{i=1}^n\int_A F_i \dive\, v_i\dd{\mass}\right\},
	\end{equation}
	where the supremum is taken among all $v=(v_1,\dots,v_n)\in\mathcal{W}_A^n$, where
	
	\begin{equation}
		\label{mathvvet}
		\begin{split}
			\mathcal{W}_A^n\defeq\Big\{v=(v_1,\dots v_n)\in\TestV(\XX)^n:\abs{v}\le 1\ \mass\text{-a.e.\ } \supp \abs{v}\Subset A\Big\}.
		\end{split}
	\end{equation}
	Finally, the supremum can be equivalently taken among all $\mathcal{\tilde{W}}_A^n$
	\begin{equation}
		\notag
		\begin{split}
			\mathcal{\tilde{W}}_A^n\defeq\Big\{v=(v_1,\dots v_n)\in\TestV(\XX)^n:\abs{v}\le 1\ \mass\text{-a.e.\ } \supp \abs{v}\subseteq A\Big\}.
		\end{split}
	\end{equation}
\end{prop}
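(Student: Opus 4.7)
The strategy is to mimic the scalar Proposition \ref{reprfordiffregularpre2}: Cauchy--Schwarz in $\RR^n$ replaces the pointwise bound for the inequality $\sup\le\abs{\DIFF F}(A)$, while heat-flow regularization combined with the approximation Lemma \ref{approxHod} delivers $\abs{\DIFF F}(A)\le\sup$. Equivalence of the two suprema (over $\mathcal{W}_A^n$ and $\mathcal{\tilde{W}}_A^n$) follows verbatim from the last paragraph of the scalar proof via inner regularity of $\abs{\DIFF F}$.

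\textbf{Upper bound.} Given $v\in\mathcal{W}_A^n$, pick an open $B$ with $\supp\abs{v}\Subset B\subseteq\bar B\subseteq A$ and a globally optimal sequence $\{F_{i,k}\}_k\subseteq\LIPbs(\XX)$ for $\abs{\DIFF F}(\XX)$. Lower semicontinuity of the vector total variation on open sets combined with global optimality yields $\Vert(\lip F_{i,k})_i\Vert_e\,\mass\rightharpoonup\abs{\DIFF F}$ in duality with $\Cb(\XX)$. Integration by parts and the Cauchy--Schwarz inequality in $\RR^n$ give
\[\bigg|\sum_i\int dF_{i,k}(v_i)\dd{\mass}\bigg|\le\int_{\bar B}\Vert(\lip F_{i,k})_i\Vert_e\dd{\mass};\]
letting $k\to\infty$ produces $\sum_i\int F_i\dive v_i\dd{\mass}\le\abs{\DIFF F}(\bar B)\le\abs{\DIFF F}(A)$.

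\textbf{Lower bound.} Reduce to $F\in\Lpi^n\cap\Lpu^n$ by componentwise truncation, and set $F^t\defeq\heat_t F$, so that each $F^t_i\in\HSs\cap\LIPb(\XX)$ and $u^t_i\defeq\nabla F^t_i\in\WHHSs\cap\Lp^\infty(T\XX)$; denote $\abs{u^t}\defeq\Vert(\abs{u^t_i})_i\Vert_e$. Lower semicontinuity and $F^t\to F$ in $\Lp^1_{\mathrm{loc}}$ give $\abs{\DIFF F}(B)\le\liminf_{t\to 0}\int_B\abs{u^t}\dd{\mass}$ for any $B\Subset A$. Fix cutoffs $\eta\in\LIPbs(\XX)$ with $\eta\equiv 1$ on $B$, $\supp\eta\Subset A$, $\eta\in[0,1]$, and $\chi\in\TestF(\XX)$ with $\chi\equiv 1$ on $\supp\eta$, $\supp\chi\Subset A$, $\chi\in[0,1]$, and set $\phi^{t,\delta}_i\defeq -\eta u^t_i/(\abs{u^t}\vee\delta)\in\WHHSs$ (using Lemma \ref{calculushodge}), with $\abs{\phi^{t,\delta}}\le 1$ and support in $\supp\eta$. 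By Lemma \ref{approxHod} there exist $w^{k,t,\delta}\in\TestV(\XX)^n$ with $\abs{w^{k,t,\delta}}\le 1$ and $w^{k,t,\delta}_i\to\phi^{t,\delta}_i$ in $\WHHSs$; the cutoffs $v^{k,t,\delta}\defeq\chi w^{k,t,\delta}\in\mathcal{W}_A^n$ still converge to $\phi^{t,\delta}$ in $\WHHSs$ by the calculus rules. For the Sobolev function $F^t$, integration by parts yields
\[\sum_i\int F^t_i\dive v^{k,t,\delta}_i\dd{\mass}\xrightarrow{k\to\infty}\int\eta\,\frac{\abs{u^t}^2}{\abs{u^t}\vee\delta}\dd{\mass}\xrightarrow{\delta\to 0}\int\eta\abs{u^t}\dd{\mass}\ge\int_B\abs{u^t}\dd{\mass}.\]

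\textbf{Main obstacle.} The genuine technical step is replacing $F^t$ by $F$ in the above identity so as to realize the quantity as $\sum_i\int F_i\dive V_i\dd{\mass}$ with $V\in\mathcal{W}_A^n$. The natural device is self-adjointness of $\heat_t$ combined with the commutation $\heat_t\circ\dive=\dive\circ\heat_{\mathrm{H},t}$, which rewrites the left-hand side as $\sum_i\int F_i\dive(\heat_{\mathrm{H},t}v^{k,t,\delta}_i)\dd{\mass}$; however $\heat_{\mathrm{H},t}v^{k,t,\delta}$ typically loses the compact support of $v^{k,t,\delta}$ and only satisfies the pointwise bound $\abs{\heat_{\mathrm{H},t}v^{k,t,\delta}}\le e^{-Kt}$, so a further approximation by elements of $\mathcal{W}_A^n$ (again by Lemma \ref{approxHod} plus a cutoff, after renormalization by $e^{-Kt}\vee 1$ to enforce the pointwise bound) is required. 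Since the renormalization factor tends to $1$ as $t\to 0$, ordering the iterated limits $k\to\infty$, $\delta\to 0$, the secondary approximation, and finally $t\to 0$ delivers $\sup\ge\liminf_{t\to 0}\int_B\abs{u^t}\dd{\mass}\ge\abs{\DIFF F}(B)$, and $B\uparrow A$ by inner regularity concludes.
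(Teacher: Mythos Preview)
Your upper bound is fine in spirit (it is the vector analogue of the scalar argument in Proposition~\ref{reprfordiffregularpre2}), and the equivalence of $\mathcal W_A^n$ and $\tilde{\mathcal W}_A^n$ indeed transfers verbatim. The paper instead proves the upper bound via heat-flow regularization of the $F_i$'s and the Bakry--\'Emery inequality (Proposition~\ref{bakryemry}), but your route works as well once one checks that the vector total variation admits an optimal sequence in $\LIPbs(\XX)^n$ (the paper does not state this explicitly for $n>1$).

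The real problem is in your lower bound, precisely at the point you flag as the ``Main obstacle''. After the commutation $\int F_i^t\dive v_i=\int F_i\dive(\heat_{\mathrm H,t}v_i)$, the vector field $\heat_{\mathrm H,t}v^{k,t,\delta}$ is supported on all of $\XX$, not in $A$. Your proposed fix --- renormalize, approximate via Lemma~\ref{approxHod}, then multiply by a cutoff supported in $A$ --- does not work: multiplying by a cutoff $\rho$ produces an extra term $\sum_i\int F_i\,\nabla\rho\cdot(\heat_{\mathrm H,t}v_i^{k,t,\delta})\dd\mass$ and modifies the divergence term by the factor $\rho$ on the set where $\rho\neq 1$, and there is no reason these errors are small (they are not localized near $B$, they depend on $t,k,\delta$ in an uncontrolled way, and Lemma~\ref{approxHod} provides $\WHHSs$-convergence, not convergence of divergences). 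So what your argument actually proves is only the \emph{global} inequality
\[
\sup_{v\in\mathcal W_\XX^n}\sum_i\int_\XX F_i\dive v_i\dd\mass\ \ge\ \abs{\DIFF F}(B),
\]
not the local one with $\mathcal W_A^n$.

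The paper closes this gap in a structurally different way: it introduces $\abs{\DIFF F}^*(A)$ for the right-hand side of \eqref{intbypartsvet}, proves $\abs{\DIFF F}^*\le\abs{\DIFF F}$ on open sets (your Step~1), then spends an entire step (its Step~2) verifying via the Carath\'eodory criterion that $\abs{\DIFF F}^*$ is itself the restriction of a finite Borel measure, and only then (Step~3) proves the global bound $\abs{\DIFF F}^*(\XX)\ge\abs{\DIFF F}(A)$ for bounded open $A$. The three together force $\abs{\DIFF F}^*=\abs{\DIFF F}$ as measures. You are missing Step~2 entirely; without it, the heat-flow argument cannot be localized.
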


\begin{proof}
	Call $\abs{\DIFF F}^*$ the quantity defined by the right hand side of \eqref{intbypartsvet}, we show now that $\abs{\DIFF F}^*$ is the restriction to open sets of a finite Borel measure, that we still denote with $\abs{\DIFF F}^*$ and that $\abs{\DIFF F}^*=\abs{\DIFF F}$ as measures.
	\\\textsc{Step 1}.
	We show that $\abs{\DIFF F}^*(A)\le \abs{\DIFF F}(A)$ for every open set $A$. Fix then $A\subseteq\XX$ open. Assume for the moment that also
	$F_i\in\Lp^\infty(A,\mass)\cap\LIPloc(A)$ is such that $\int_A\lip(F_i)\dd{\mass}<\infty$ for every $i=1,\dots,n$ and take any $v=(v_1,\dots,v_n)\in\WW_A^n$. Set now $$C\defeq\supp\abs{v}$$ Notice $C\Subset A$ and take a cut-off function $\psi\in\LIPbs(\XX)$ with $\psi(x)\in[0,1]$ for every $x\in\XX$, $\psi=1$ on a neighbourhood of $C$ and $\supp\psi\Subset A$. Therefore, for every $i=1,\dots,n$, $\psi F_i\in\Lp^\infty(\mass)\cap\LIPloc(\XX)$ is such that $\int_\XX\lip(\psi F_i)\dd{\mass}<\infty$. We can now estimate, for $t>0$, using the Cauchy-Schwarz inequality and Proposition \ref{bakryemry},
	\begin{equation}\notag
		\begin{split}
			-\sum_{i=1}^n\int \heat_t(\psi F_i) \dive\, v_i\dd{\mass}&=
			\sum_{i=1}^n\int \nabla \heat_t(\psi F_i)\,\cdot\, v_i\dd{\mass}\le \int_C \Vert (\abs{\nabla \heat_t(\psi F_i)})_i\Vert_e \dd{\mass}\\&\le e^{-K t} \int_C \Vert ({ \heat_t\abs{\DIFF ( \psi F_i)}})_i\Vert_e \dd{\mass}\le e^{-K t} \int_C \Vert ({ \heat_t{\lip ( \psi F_i)}})_i\Vert_e \dd{\mass}
		\end{split}
	\end{equation}
	so that, letting $t\searrow  0$,
	$$ -\sum_{i=1}^n\int  F_i \dive\, v_i\dd{\mass}\le \int_C \Vert( \lip(\psi F_i))_i\Vert_e\le \int_A \Vert \lip(F_i)\Vert_e.$$
	
	Back to the general case $F\in\BVv^n$, we notice that we have to show the claim in the case $F_i\in\Lp^\infty(A,\mass)$ for every $i=1,\dots,n$. Then, we can conclude by the very definition of $\abs{\DIFF F}$ and what said above, noticing that approximating sequences can be taken made of functions equibounded in $\Lpi$ with no loss of generality.
	\\\textsc{Step 2}. We show that $\abs{\DIFF F}^*$ is the restriction to open sets of a finite Borel measure (that we still call $\abs{\DIFF F}^*$).
	To this aim, we can use Carathéodory criterion (\cite{AFP00}, cf.\ \cite[Proof of Lemma 5.2]{ADM2014}) and is then enough to verify (all the sets in consideration are assumed to be open):
	\begin{enumerate}
		\item $\abs{\DIFF F}^*(A)\le\abs{\DIFF F}^*(B)$ if $A\subseteq B$,
		\item $\abs{\DIFF F}^*(A\cup B)\ge\abs{\DIFF F}^*(A)+\abs{\DIFF F}^*(B)$ if $\dist(A,B)>0$,
		\item $\abs{\DIFF F}^*(A)=\lim_k\abs{\DIFF F}^*(A_k)$ if $A_k\nearrow A$,
		\item$\abs{\DIFF F}^*(A\cup B)\le \abs{\DIFF F}^*(A)+\abs{\DIFF F}^*(B)$.
	\end{enumerate}
	We notice that $(1)$ and $(2)$ follow trivially from the definition of $\abs{\DIFF F}^*$ and that $(2)$ does not even need the sets to be well separated. We prove now property $(3)$. Fix $\epsilon>0$ and take a compact subset $K$ with $K\subseteq A$ and $\abs{\DIFF F}(A\setminus K)\le\epsilon$. Then there exists $\bar{k}$ such that $K\subseteq A_{\bar{k}}$, in particular we can find $\psi\in\LIPbs(\XX)$ with $\psi(x)\in[0,1]$ for every $x\in\XX$, $\psi=1$ on a neighbourhood of $K$ and $\supp\psi\Subset A_{\bar{k}}$. If we take $v=(v_1,\dots,v_n)\in\WW^n_A$, we can write $v_i=\psi v_i+(1-\psi)v_i$ for $i=1,\dots,n$, notice $(\psi v_i)_i \in\WW^n_{A_{\bar{k}}}$ and $((1-\psi)v)_i\in \WW^n_{A\setminus K}$. Then we can compute, using that $\abs{\DIFF F}^*(A\setminus K)\le \abs{\DIFF F}(A\setminus K)\le\epsilon$,
	$$\sum_{i=1}^n \int_A F_i\dive\, v_i\dd{\mass}=\sum_{i=1}^n \int_{A_{\bar{k}}} F_i\dive (\psi v_i)\dd{\mass}+\sum_{i=1}^n \int_{A\setminus K} F_i\dive ((1-\psi) v_i)\dd{\mass}\le \abs{\DIFF F}^*(A_{\bar{k}})+\epsilon
	$$
	so that $(3)$ follows as $v\in\WW^n_A$ and $\epsilon>0$ are arbitrary.
		We prove now $(4)$. Take a sequence of bounded open sets $\{A_k\}_k$ with $A_k\nearrow A$ and $A_k\subseteq \left\{x\in A:\dist(x,\XX\setminus A)>k^{-1}\right\}$; take similarly $\{B_k\}_k$. Fix $k$ and take $\tilde{\psi}_A\in\LIPbs(\XX)$ with $\tilde{\psi}_A(x)\in[0,1]$ for every $x\in\XX$, $\tilde{\psi}_A=1$ on a neighbourhood of $A_k$ and $\supp\tilde{\psi}_A\Subset A$; define similarly $\tilde{\psi}_B$. Define also ${\psi}_A\defeq\tilde{\psi}_A$ and ${\psi}_B\defeq\tilde{\psi}_B(1-\tilde{\psi}_A)$.
		Take then $v=(v_1,\dots,v_n)\in\WW^n_{A_k\cup B_k}$.  
		Writing $v_i={\psi}_A v_i+{\psi}_B v_i$ for $i=1,\dots,n$ we can argue similarly as above to verify that
		$$ \abs{\DIFF F}^*(A_k\cup B_k)\le \abs{\DIFF F}^*(A)+\abs{\DIFF F}^*(B)$$
		so that $(4)$ follows letting $k\rightarrow\infty$, taking into account $(3)$.
	\\\textsc{Step 3}.	We conclude that $\abs{\DIFF F}^*= \abs{\DIFF F}$. By the previous steps, it is enough to show $\abs{\DIFF F}^*(\XX)\ge \abs{\DIFF F}(A)$ if $A\subseteq\XX$ is open and bounded.
	Assume for the moment that also $F\in\Lpi^n$. Let $\{t_k\}_k$, $t_k\searrow 0$ and consider $F_{i,k}\defeq \heat_{t_k}F_i$. By lower semicontinuity of the total variation,
	\begin{equation}\notag
		\abs{\DIFF F}(A)\le \liminf_k \abs{\DIFF (F_{1,k},\dots,F_{n,k})}(A)
	\end{equation}
	and then, taking into account that $\mass(A)<\infty$ and the general theory of Sobolev spaces,
	\begin{equation}\label{toapprox}
		\abs{\DIFF F}(A)\le \liminf_k \int_A \Vert (\abs{\nabla F_{i,k}})_{i=1,\dots,n}\Vert_e\dd{\mass}\le \liminf_k \int_\XX \Vert (\abs{\nabla F_{i,k}})_{i=1,\dots,n}\Vert_e\dd{\mass}.
	\end{equation}
	By density, we take $F_{i,k,l}\subseteq\TestF(\XX)$ such that $F_{i,k,l}\rightarrow F_{i,k}$ in $\HSs$ as $l\rightarrow\infty$ for every $i$.
	We can write the right hand side of \eqref{toapprox} as
	\begin{equation}\notag
		\liminf_k\lim_{\epsilon\searrow 0}\lim_l\lim_{\delta \searrow 0}\sum_i \int_\XX \nabla F_{i,k} \cdot\frac{\nabla \heat_\delta F_{i,k,l}}{\sqrt{\sum_j \heat_\delta(\abs{\nabla F_{j,k,l}}^2)+\epsilon}}\dd{\mass},
	\end{equation}
	that is,
	\begin{equation}\label{quantity}
		\liminf_k\lim_{\epsilon\searrow 0}\lim_l\lim_{\delta \searrow 0}\sum_i \int_\XX   F_{i,k} \dive\left(\frac{\nabla \heat_\delta F_{i,k,l}}{\sqrt{\sum_j \heat_\delta(\abs{\nabla F_{j,k,l}}^2)+\epsilon}}\right)\dd{\mass}.
	\end{equation}
	Recalling the properties of the heat flow $\heat_{\mathrm{H},t}$, we can rewrite the quantity in \eqref{quantity} as
	\begin{equation}\notag
		\liminf_k\lim_{\epsilon\searrow 0}\lim_l\lim_{\delta \searrow 0}\sum_i \int_\XX   F_i \dive\left(\heat_{\mathrm{H,t_k}}\left(\frac{\nabla \heat_\delta F_{i,k,l}}{\sqrt{\sum_j \heat_\delta(\abs{\nabla F_{j,k,l}}^2)+\epsilon}}\right)\right)\dd{\mass}
	\end{equation}
	and see that it is bounded by $\abs{\DIFF F}^*(\XX)$, by an approximation argument that relies on Lemma \ref{approxHod}; here we used that an immediate approximation argument yields that if $A=\XX$  the request that $\supp v_i$ is compact in \eqref{mathvvet} is irrelevant. We have therefore proved $\abs{\DIFF F}^*(\XX)= \abs{\DIFF F}(\XX)$ in the case $F$ bounded. 
	
	We treat the general case. We write
	\begin{equation}\label{approxsucc}
		F^m\defeq((F_1\vee -m)\wedge m,\dots,(F_n\vee -m)\wedge m).
	\end{equation} 
	Now we can conclude easily, as, by lower semicontinuity, what we just proved and \eqref{coareaeqdiff}
	\begin{equation*}\notag
		\begin{split}
			\abs{\DIFF F}(\XX)&\le \liminf_m \abs{\DIFF (F^m)}(\XX)=	\abs{{\DIFF (F^m)}}^*(\XX)\le \abs{\DIFF F}^*(\XX)+\abs{\DIFF (F^m-F)}^*(\XX)\\&\le  \abs{\DIFF F}^*(\XX)+\sum_i \abs{\DIFF (F_i^m-F_i)}(\XX)\rightarrow \abs{\DIFF F}^*(\XX).
		\end{split}
	\end{equation*}
	
	The last claim can be proved as for Proposition \ref{reprfordiffregularpre2}.
\end{proof}

\begin{rem}
	One may wonder whether Proposition \ref{reprvett1} holds also in the more general setting of (infinitesimally Hilbertian) metric measure spaces, with the obvious modifications (i.e.\ whether we can extend Proposition \ref{reprfordiffregularpre2} to functions taking values in $\RR^n$ instead of $\RR$). It seems anything but straightforward to adapt the argument used in \cite{DiM14a} (extracted from \cite{DiMarino14,ADM2014}) as here we face a difficulty generalizing the approach via test plans. For this reason we had to provide a completely different proof, obtained via approximation arguments, at the price of working in more regular spaces.
	We give here an example of this issue, using the notation of the articles just cited. We point out that the difference $\abs{\DIFF f}\ne \abs{\DIFF f}_w$ that we are going to see is what we expect, given the choice of the relaxation made to define the total variation, cf.\ Remark \ref{whattorelax}.
	
	Consider $\XX\defeq [0,1]^2\subseteq\RR^2$ endowed with the Euclidean distance and the Lebesgue measure. Let $f:\XX\rightarrow\RR^2$ be the identity. It is clear that $f\in\BVv^2$ and $\abs{\DIFF f}(\XX)=\sqrt{2}$. However, computing the total variation defined via test plans, $\abs{\DIFF f}_w=1$.
	Indeed, if $B\subseteq\XX$ is a Borel set and $\boldsymbol{\pi}$ is a test plan, we obtain, using Fubini's theorem,
	\begin{equation}\notag
		\begin{split}
			\int \gamma_{\#} \abs{\DIFF( f\circ\gamma)}(B)\dd{\boldsymbol{\pi}(\gamma)}&\le \int \LL^1(\{t:\gamma_t\in B\})  \Lip(\gamma) \dd{\boldsymbol{\pi}(\gamma)}\\&\le \Vert\Lip(\gamma)\Vert_{\Lp^\infty(\boldsymbol{\pi})} (\LL^1\otimes \boldsymbol{\pi})(\{(t,\gamma):\gamma_t\in B\})\\&\le \Vert\Lip(\gamma)\Vert_{\Lp^\infty(\boldsymbol{\pi})} C(\boldsymbol{\pi})\LL^2(B),
		\end{split}
	\end{equation}
	so that $\abs{\DIFF f}_w\le \LL^2$.
	\fr
\end{rem}

\subsection{Fine modules}
In this subsection, we mostly recall the results of \cite{debin2019quasicontinuous}, which will be of great importance in what follows. 
\begin{thm}[{\cite[Theorem 2.6]{debin2019quasicontinuous}}]\label{tancapa}
	Let $(\XX,\dist,\mass)$ be an $\RCD(K,\infty)$ space.
	Then there exists a unique couple $(\tanXcap,\nablatilde)$, where $\tanXcap$ is a $\Lp^0(\capa)$-normed $\Lp^0(\capa)$-module and $\nablatilde:	\TestF(\XX) \rightarrow\tanXcap$ is a linear operator such that:
	\begin{enumerate}[label=\roman*)]
		\item
		$|{\nablatilde f}|=\qcr(\abs{\nabla f}) \ \capa$-a.e.\ for every $f\in\TestF(\XX)$,
		\item
		the set $\left\{\sum_{n} \chi_{E_n}\nablatilde f_n\right\}$, where $\{f_n\}_n\subseteq\TestF(\XX)$ and $\{E_n\}_n$ is a Borel partition of $\XX$ is dense in $\tanXcap$.
	\end{enumerate}
	Uniqueness is intended up to unique isomorphism, this is to say that if another couple $(\tanXcap',\nablatilde')$ satisfies the same properties, then there exists a unique module isomorphism $\Phi:\tanXcap\rightarrow\tanXcap'$ such that $\Phi\circ \nablatilde=\nablatilde'$.
	Moreover, $\tanXcap$ is a Hilbert module that we call capacitary tangent module.
\end{thm}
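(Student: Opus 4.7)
I would run the standard universal-property argument for $\Lp^0$-normed modules. Given two candidates $(\tanXcap,\nablatilde)$ and $(\tanXcap',\nablatilde')$ satisfying (i) and (ii), set
\[
\Phi\Bigl(\sum_n \chi_{E_n}\nablatilde f_n\Bigr) := \sum_n \chi_{E_n}\nablatilde' f_n
\]
on the dense subset from (ii). Property (i) makes this map well defined and isometric: indeed, any $\RR$-linear identity among the $\nablatilde f_n$'s that holds in $\tanXcap$ implies, via (i), that the corresponding combination has vanishing $\capa$-a.e.\ pointwise norm, which must agree with the pointwise norm of the image in $\tanXcap'$ (again by (i), which determines it from $\qcr(|\nabla\cdot|)$ in a universal way). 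Extend by density and invoke the same argument on the inverse to obtain a unique module isomorphism.

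\textbf{Existence.} I would build $\tanXcap$ by hand. Start with the $\RR$-vector space freely generated by the symbols $\{\nablatilde f\}_{f\in\TestF(\XX)}$ modulo the $\RR$-linear relations satisfied by $\nabla:\TestF(\XX)\to\tanX$, so that $f\mapsto\nablatilde f$ becomes genuinely linear. Form the pre-module $V_0$ of simple expressions $v=\sum_n \chi_{E_n}\nablatilde f_n$ indexed by countable Borel partitions $\{E_n\}_n$ of $\XX$, with the obvious pointwise operations. Equip $V_0$ with the $\Lp^0(\capa)$-valued pseudonorm and bilinear pairing
\[
|v|_\capa := \sum_n \chi_{E_n}\,\qcr(|\nabla f_n|), \qquad \langle v,w\rangle_\capa := \sum_{n,m}\chi_{E_n\cap F_m}\,\qcr(\nabla f_n\,\cdot\,\nabla g_m),
\]
where $w=\sum_m \chi_{F_m}\nablatilde g_m$. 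These are well posed in $\Lp^0(\capa)$ because, by the algebra property of $\TestF(\XX)$ and the Bakry--\'Emery calculus on $\RCD(K,\infty)$ spaces, $|\nabla f|^2$ and $\nabla f\,\cdot\,\nabla g$ lie in $\HSs\cap\Lpi$ for $f,g\in\TestF(\XX)$ and hence admit quasi-continuous representatives. Quotient by the kernel of $|\cdot|_\capa$ and complete in the topology of convergence in capacity; the result is an $\Lp^0(\capa)$-normed $\Lp^0(\capa)$-module satisfying (i) by construction, and satisfying (ii) tautologically because simple elements are dense in the completion. The Hilbert-module structure is inherited from $\langle\cdot,\cdot\rangle_\capa$ via the parallelogram identity, which survives the passage to the $\Lp^0(\capa)$-completion.

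\textbf{Main obstacle.} The substantive technical step is to verify that $\qcr$ intertwines the algebraic operations appearing in the construction; concretely, for $f,g\in\TestF(\XX)$ and $\alpha,\beta\in\RR$ one must show
\[
\qcr\bigl(|\nabla(\alpha f+\beta g)|^2\bigr)=\alpha^2\,\qcr(|\nabla f|^2)+2\alpha\beta\,\qcr(\nabla f\,\cdot\,\nabla g)+\beta^2\,\qcr(|\nabla g|^2) \quad\capa\text{-a.e.},
\]
together with the analogous identities needed to check that $|v|_\capa$ is a genuine pseudonorm and that $\langle\cdot,\cdot\rangle_\capa$ is $\Lp^0(\capa)$-bilinear. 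This reduces to the uniqueness $\capa$-a.e.\ of quasi-continuous representatives and the \emph{lifting principle} that equalities holding $\mass$-a.e.\ among functions in $\HSs\cap\Lpi$ propagate to $\capa$-a.e.\ equalities among their q.c.\ representatives. Granted this lifting (standard in the theory of the Dirichlet-form capacity), the remaining verifications and the completion procedure are mechanical.
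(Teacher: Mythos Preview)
The paper does not prove this theorem: it is quoted verbatim from \cite[Theorem~2.6]{debin2019quasicontinuous} and used as a black box, so there is no in-paper proof to compare against. Your sketch is nonetheless essentially the argument given in the cited reference: the construction is the standard one for $\Lp^0$-normed modules (free pre-module of simple combinations, pointwise pseudonorm, quotient, completion), and you have correctly isolated the only nontrivial ingredient, namely that for $f,g\in\TestF(\XX)$ the functions $|\nabla f|^2$ and $\nabla f\cdot\nabla g$ belong to $\HSs$ (this is where the $\RCD(K,\infty)$ assumption enters, via the $\Gamma_2$ estimate) and hence admit quasi-continuous representatives, so that $\mass$-a.e.\ polarization identities lift to $\capa$-a.e.\ identities. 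The uniqueness argument you give is also the standard one.
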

It is worth spending a few words on $\Lpc$-normed $\Lpc$-modules and, in particular, on $\tanXcap$, as the $\Lpc$ and $\Lpo$ topologies may behave quite differently. First, $\Lpc$-normed $\Lpc$-modules enjoy the following important properties (cf.\ \cite[Definition 1.2.1]{Gigli14}):
\begin{enumerate}[label=\roman*)]
	\item \textsc{locality}: for every $v\in\tanXcap$ and $\{A_i\}_{i}$ sequence of Borel subsets of $\XX$ such that $\chi_{A_i}v=0$ for every $i\in\NN$, then $\chi_{\bigcup_i A_i} v=0$,
	\item \textsc{gluing}: if $\{v_i\}_i\subseteq \tanXcap$ and $\{A_i\}_{i}$ is a sequence of pairwise disjoint Borel subsets of $\XX$, there exists $v\in\tanXcap$ such that $\chi_{A_i} v=\chi_{A_i} v_i$ for every $i\in\NN$.
\end{enumerate}
Indeed, the first property follows trivially from the existence of the pointwise norm. For what concerns the second property, notice first that, partitioning the sets $A_i$ and using locality, we can with no loss of generality assume that $\abs{v_i}\in\Lp^{\infty}(\capa)$ for every $i$. We can then set $a_i\defeq 2^{-i}\Vert 1+\abs{v_i}\Vert_{\Lp^\infty(\capa)}$ and consider the Cauchy sequence $n\mapsto \sum_{i=1}^n a_i^{-1}\chi_{A_i} v_i$ and then multiply its limit by $f\defeq \sum_{i=1}^\infty a_i\chi_{A_i} $ so that we can conclude by locality. However, the gluing property for $\tanXcap$ follows directly from its construction, starting from the set of infinite linear combinations as in item $ii)$ of Theorem \ref{tancapa}. Notice that one needs the the gluing property for $\tanXcap$ to define the multiplication by functions in $\Lpc$ so that we can not use the argument above to prove the gluing property for $\tanXcap$. This discussion is relevant because the map $$\Lpc\times\tanXcap\ni(f,v)\mapsto f v\in\tanXcap$$ is not continuous in general.
For example, set $(\XX,\dist,\mass)=([0,1],\dist_e,\LL^1)$, recall \cite[Example 2.17]{debin2019quasicontinuous} and notice that $\Lp^\infty(\capa)$ is a closed (non trivial) subspace of $\Lpc$. Take $v_n\defeq(1+n^{-1})\chi_{(0,1)}$ and $f(x)\defeq 1/x$. Clearly $v_n\rightarrow v\defeq \chi_{(0,1)}\in\tanXcap$, however $\{f v_n\}\in\tanXcap$ is not even a Cauchy sequence.

Notice that we can, and will, extend the map $\qcr$ from $\HSs$ to $\Ss\cap\Lpi$ by a locality argument.
We define
\begin{equation}\notag
	\TestVbar(\XX)\defeq \left\{ \sum_{i=1}^n \qcr(f_i) \nablatilde g_i :f_i\in\Ss\cap\Lpi,g_i\in\TestF(\XX) \right\}.
\end{equation}

We define also the vector subspace of quasi-continuous vector fields, $\Cqcvf$, as the closure of $\TestVbar(\XX)$ in $\tanXcap$ and finally,
\begin{equation}\label{Cqcvfdef}
	\Cqcvfinf\defeq\left\{v\in\Cqcvf:\abs{v}\text{ is $\capa$-essentially bounded} \right\}.
\end{equation}

Recall now that as $\mass\ll\capa$, we have a natural projection map
\begin{equation}\notag
	\Pr:\Lpc\rightarrow\Lpo \quad \text{defined as}\quad[f]_{\Lpc}\mapsto [f]_{\Lpo}
\end{equation}
where $[f]_{\Lpc}$ (resp.\ $[f]_{\Lpo}$) denotes the $\capa$ (resp.\ $\mass$) equivalence class of $f$. It turns out that $\Pr$, restricted to the set of quasi-continuous functions, is injective (\cite[Proposition 1.18]{debin2019quasicontinuous}).
We have the following projection map $\Prbar$, given by \cite[Proposition 2.9 and Proposition 2.13]{debin2019quasicontinuous}, which plays the role of $\Pr$ on vector fields. 
\begin{prop}\label{prbardef}
	Let $(\XX,\dist,\mass)$ be an $\RCD(K,\infty)$ space. There exists a unique linear continuous map \begin{equation}\notag
		\Prbar :\tanXcap\rightarrow\tanXzero
	\end{equation}
	that satisfies
	\begin{enumerate}[label=\roman*)]
		\item $\Prbar (\nablatilde f)=\nabla f$ for every $f\in\TestF(\XX)$,
		\item $\Prbar (g v)=\Pr(g)\Prbar(v)$ for every $g\in\Lpc$ and $v\in\tanXcap$.
	\end{enumerate}
	Moreover, for every $v\in\tanXcap$,
	\begin{equation}\notag
		\abs{\Prbar(v)}=\Pr(\abs{v})\quad\mass\text{-a.e.}
	\end{equation}
	and $\Prbar$, when restricted to the set of quasi-continuous vector fields, is injective.
\end{prop}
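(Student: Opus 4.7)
The plan is to define $\Prbar$ first on the dense set of elements of the form $v = \sum_n \chi_{E_n} \nablatilde f_n$ (with $\{E_n\}_n$ a Borel partition and $\{f_n\}_n \subseteq \TestF(\XX)$), then extend by continuity. The natural definition is
\[
\Prbar\!\left(\sum_n \chi_{E_n} \nablatilde f_n\right) \defeq \sum_n \chi_{E_n} \nabla f_n \in \tanXzero,
\]
and the whole construction hinges on the pointwise norm identity $|\Prbar(v)| = \Pr(|v|)$ $\mass$-a.e. To establish this identity for such $v$, I would first use property i) of Theorem~\ref{tancapa} to see that $|v| = \sum_n \chi_{E_n} \qcr(|\nabla f_n|)$ $\capa$-a.e.; then, since the quasi-continuous representative agrees with the original function $\mass$-a.e.\ (as $\mass \ll \capa$ and $\qcr$ only modifies on a $\capa$-negligible, hence $\mass$-negligible, set), one has $\Pr(\qcr(|\nabla f_n|)) = |\nabla f_n|$ $\mass$-a.e., giving $\Pr(|v|) = \sum_n \chi_{E_n} |\nabla f_n| = |\Prbar(v)|$ $\mass$-a.e. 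This identity also immediately shows that $\Prbar$ is well-defined on equivalence classes: two dense representatives that agree $\capa$-a.e.\ (in particular $\mass$-a.e.) map to the same element of $\tanXzero$.

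Next I would extend $\Prbar$ to all of $\tanXcap$ by continuity. Given $v \in \tanXcap$ and a sequence $\{v_k\}_k$ of the special form with $v_k \to v$ in $\tanXcap$, the norm identity gives $|\Prbar(v_k) - \Prbar(v_j)| = \Pr(|v_k - v_j|)$ $\mass$-a.e., so $\{\Prbar(v_k)\}_k$ is Cauchy in $\tanXzero$ (the $\mass$-a.e.\ topology is weaker than the $\capa$-a.e.\ one on measurable functions because $\mass\ll\capa$). The limit is well-defined, linear, continuous, and satisfies $|\Prbar(v)| = \Pr(|v|)$ $\mass$-a.e. For property ii), first verify it when $g = \chi_A$ and $v$ is a finite combination $\sum_i \chi_{E_i}\nablatilde f_i$, where it is immediate from the definition; extend to simple $g$ by linearity; then to $g \in \Lpcinf$ by bounded convergence (approximate by simple $g_k$ with $|g_k|\le|g|$, for which $g_k v\to gv$ in $\tanXcap$ by the module norm identity, and pass to the limit using the pointwise norm identity already proved); finally reduce the case $g \in \Lpc$ to the bounded case via locality on the sets $\{|g|\le n\}$. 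Uniqueness follows because any other continuous map satisfying i) and ii) agrees with $\Prbar$ on the dense set $\{\sum_n \chi_{E_n}\nablatilde f_n\}$, hence everywhere by continuity.

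For the final injectivity statement on $\Cqcvf$, I would argue as follows: by continuity of the norm and the explicit form of elements of $\TestVbar(\XX)$, the $\capa$-class $|v|$ is represented by a quasi-continuous function whenever $v\in\Cqcvf$ (the pointwise norms of elements of $\TestVbar(\XX)$ are quasi-continuous by construction of $\qcr$, and this passes to closures). Then if $\Prbar(v) = 0$, the norm identity gives $\Pr(|v|) = 0$ in $\Lpo$, and the injectivity of $\Pr$ on quasi-continuous functions (\cite[Proposition 1.18]{debin2019quasicontinuous}) yields $|v| = 0$ $\capa$-a.e., i.e.\ $v = 0$. The main obstacle I foresee is making the norm identity and the extension step completely rigorous, because the multiplication $\Lpc \times \tanXcap \to \tanXcap$ is not continuous in general (as the discussion after Theorem~\ref{tancapa} emphasizes); one has to bypass this by restricting approximations to uniformly $\capa$-essentially bounded factors or by using locality and truncation, rather than naive limiting arguments.
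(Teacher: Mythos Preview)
Your proposal is correct and follows the natural construction. Note, however, that the paper does not supply its own proof of this proposition: it is quoted directly from \cite[Proposition 2.9 and Proposition 2.13]{debin2019quasicontinuous}, so there is no in-paper argument to compare against. Your outline---define $\Prbar$ on the generating set $\{\sum_n \chi_{E_n}\nablatilde f_n\}$, establish the pointwise norm identity there, extend by continuity via the continuity of $\Pr:\Lpc\to\Lpo$, and deduce injectivity on $\Cqcvf$ from the quasi-continuity of $|v|$ combined with \cite[Proposition 1.18]{debin2019quasicontinuous}---is essentially how the cited reference proceeds.

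Two minor points worth tightening. First, for property ii) with general $g\in\Lpc$, your locality reduction is the right move, but spell it out: once ii) is known for bounded $g$ and for characteristic functions, apply $\Prbar$ to $\chi_{\{|g|\le n\}}(gv)=(\chi_{\{|g|\le n\}}g)v$ and let $n\to\infty$ using that $\bigcup_n\{|g|\le n\}$ has full $\capa$-measure; no limiting argument in the (discontinuous) product is needed. Second, the claim that $|v|$ is quasi-continuous for $v\in\Cqcvf$ is exactly \cite[Proposition 2.12]{debin2019quasicontinuous} (also recalled in the paper just after this proposition); your sketch of why it holds for $v\in\TestVbar(\XX)$ implicitly uses that $\nablatilde g_i\cdot\nablatilde g_j=\qcr(\nabla g_i\cdot\nabla g_j)$, which follows by polarization from item i) of Theorem \ref{tancapa} together with $|\nabla g|^2\in\HSs$ for $g\in\TestF(\XX)$---this is fine, but worth stating since it is where the $\RCD$ structure (not just the module formalism) enters.
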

We point out that if $v\in\Cqcvf$, \cite[Proposition 2.12]{debin2019quasicontinuous} shows that $\abs{v}\in\Lpc$ is quasi-continuous, in particular,  $v\in \Cqcvfinf$ if and only if $\Prbar(v)\in\tanXinf$.

In what follows, with a little abuse, we often write, for $v\in\tanXcap$, $v\in D(\dive)$ if and only if $\Prbar (v)\in D(\dive)$ and, if this is the case, $\dive\, v=\dive(\Prbar(v))$. Similar notation will be used for other operators acting on subspaces of $\tanXzero$. 
\begin{thm}[{\cite[Theorem 2.14 and Proposition 2.13]{debin2019quasicontinuous}}]
	Let $(\XX,\dist,\mass)$ be an $\RCD(K,\infty)$ space. Then there exists a unique map $\qcrbar:\WHCSs\rightarrow\tanXcap$ such that
	\begin{enumerate}[label=\roman*)]
		\item $\qcrbar (v)\in\Cqcvf$ for every $v\in\WHCSs$,
		\item $\Prbar\circ{\qcrbar}(v)=v$ for every $v\in\WHCSs$.
	\end{enumerate}
	Moreover, $\qcrbar$ is linear and satisfies
	\begin{equation}\notag
		\abs{\qcrbar(v)}=\qcr(\abs{v})\quad \capa\text{-a.e.\ for every }v\in\WHCSs,
	\end{equation}
	so that $\qcrbar$ is continuous.
\end{thm}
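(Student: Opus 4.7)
The idea is to define $\qcrbar$ first on the dense subspace $\TestV(\XX)\subseteq\WHCSs$ by the natural formula
\[
\qcrbar\Big(\sum_{i=1}^n f_i\nabla g_i\Big)\defeq \sum_{i=1}^n \qcr(f_i)\,\nablatilde g_i\in\TestVbar(\XX)\subseteq\Cqcvf,
\]
for $f_i\in\Ss\cap\Lpi$ and $g_i\in\TestF(\XX)$, then to establish the pointwise isometry $|\qcrbar(v)|=\qcr(|v|)$ $\capa$-a.e.\ on $\TestV(\XX)$, and finally extend $\qcrbar$ to all of $\WHCSs$ by continuity.

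\textbf{Step 1 (pointwise norm identity on $\TestV$).} Polarizing the identity $|\nablatilde g|^2=\qcr(|\nabla g|^2)$ from property $i)$ of Theorem \ref{tancapa}, and using that $\qcr$ is compatible with sums and products (indeed $\qcr(f)\qcr(h)=\qcr(fh)$ $\capa$-a.e.\ for $f,h\in\Ss\cap\Lpi$), one gets
\[
\nablatilde g_i\cdot \nablatilde g_j=\qcr(\nabla g_i\cdot \nabla g_j)\quad\capa\text{-a.e.}
\]
Expanding the pointwise norm of $\qcrbar(v)$ for $v=\sum_i f_i\nabla g_i$ then gives
\[
|\qcrbar(v)|^2=\sum_{i,j}\qcr(f_if_j\,\nabla g_i\cdot\nabla g_j)=\qcr(|v|^2)\quad\capa\text{-a.e.},
\]
hence $|\qcrbar(v)|=\qcr(|v|)$ $\capa$-a.e. (Here $|v|\in\HSs$ by a direct computation, or by Kato's inequality, so that $\qcr(|v|)$ makes sense.)

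\textbf{Step 2 (well-definedness and extension).} If $v\in\TestV(\XX)$ admits two representations with the same value in $\tanX$, their difference has $|v|=0$ $\mass$-a.e., so $\qcr(|v|^2)=0$ $\capa$-a.e.\ and Step 1 forces the corresponding element of $\tanXcap$ to vanish; this also gives linearity. To extend to all of $\WHCSs$, I use Kato's inequality $|\nabla |w||\le|\nabla w|$ $\mass$-a.e.\ for $w\in\WHCSs$, which yields $\||w|\|_{\HSs}\le\|w\|_{\WHCSs}$. Combined with the standard Markov-type estimate $\capa(\{|\qcr(f)|>\lambda\})\le C\|f\|_{\HSs}^2/\lambda^2$, the identity $|\qcrbar(v)-\qcrbar(w)|=\qcr(|v-w|)$ shows that $\qcrbar$ is continuous from $(\TestV(\XX),\|\cdot\|_{\WHCSs})$ into $\tanXcap$. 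Since $\TestV(\XX)$ is dense in $\WHCSs$ (by definition) and $\Cqcvf$ is closed in $\tanXcap$, $\qcrbar$ extends uniquely to a continuous linear map $\WHCSs\to\Cqcvf$.

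\textbf{Step 3 (properties and uniqueness).} The identity $\Prbar\circ\qcrbar=\Id$ holds on $\TestV(\XX)$ because $\Prbar(\nablatilde g)=\nabla g$ and $\Pr(\qcr(f))=f$; it then passes to $\WHCSs$ by continuity of $\Prbar$ (which follows from $\mass\ll\capa$). The identity $|\qcrbar(v)|=\qcr(|v|)$ $\capa$-a.e.\ extends by continuity as well, giving the continuity of $\qcrbar$ stated at the end. For uniqueness, if $\qcrbar'$ is any other map satisfying $i)$ and $ii)$, then for every $v\in\WHCSs$ the elements $\qcrbar(v)$ and $\qcrbar'(v)$ both lie in $\Cqcvf$ and have the same image $v$ under $\Prbar$; the injectivity of $\Prbar$ on quasi-continuous vector fields asserted in Proposition \ref{prbardef} forces them to coincide.

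\textbf{Main obstacle.} The delicate point is the continuity step: transferring convergence in the vectorial $\WHCSs$-norm to convergence in capacity of the scalar norms. This is exactly what Kato's inequality for covariant Sobolev vector fields accomplishes, enabling one to apply the standard capacity-based continuity of the scalar quasi-continuous representative. Once this reduction to the scalar theory is in place, the rest is a routine density/polarization argument.
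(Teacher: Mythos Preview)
The paper does not give its own proof of this statement: it is quoted verbatim as \cite[Theorem 2.14 and Proposition 2.13]{debin2019quasicontinuous} and used as a black box thereafter. There is therefore nothing in the present paper to compare your argument against.

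That said, your reconstruction is essentially the right one and matches the strategy of the original reference: define $\qcrbar$ on $\TestV(\XX)$ by the obvious formula, obtain the pointwise isometry $|\qcrbar(v)|=\qcr(|v|)$ via polarization of $|\nablatilde g|^2=\qcr(|\nabla g|^2)$ and multiplicativity of $\qcr$ on bounded Sobolev functions, pass to $\WHCSs$ by density using Kato's inequality to control capacities, and deduce uniqueness from the injectivity of $\Prbar$ on $\Cqcvf$.

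One small point deserves care in Step~3. When you write that the identity $|\qcrbar(v)|=\qcr(|v|)$ ``extends by continuity'', note that $v_n\to v$ in $\WHCSs$ gives $|v_n-v|\to 0$ in $\HSs$ via Kato, but this does \emph{not} directly give $|v_n|\to |v|$ in $\HSs$, which is what continuity of $\qcr$ would require. The clean way to close this is to bypass the limit: since $\qcrbar(v)\in\Cqcvf$, the scalar $|\qcrbar(v)|$ is quasi-continuous (\cite[Proposition 2.12]{debin2019quasicontinuous}), and $\Pr(|\qcrbar(v)|)=|\Prbar(\qcrbar(v))|=|v|$ by Proposition~\ref{prbardef}. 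As $|v|\in\HSs$ (Kato) and $\qcr(|v|)$ is the \emph{unique} quasi-continuous function with $\mass$-class $|v|$, injectivity of $\Pr$ on quasi-continuous functions forces $|\qcrbar(v)|=\qcr(|v|)$ $\capa$-a.e. With this adjustment your sketch is complete.
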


We often omit to write the $\qcrbar$ operator for simplicity of notation. This should cause no ambiguity thanks to the fact that 
\begin{equation}\label{qcrfactorizes}
	\qcrbar(g v)=\qcr(g)\qcrbar(v) \quad\text{for every }g\in\HSs\cap\Lpi\text{ and } v\in\WHCSs\cap\tanXinf.
\end{equation}
This can be proved easily as the continuity of the map $\qcr$ implies that $\qcr(g) \qcrbar(v)$ as above is quasi-continuous and the injectivity of the map $\Prbar$ restricted the set of quasi-continuous vector fields yields the conclusion.
Again by locality, we have that \eqref{qcrfactorizes} holds even for $g\in\Ss\cap\Lpi$.

\bigskip

The following theorem, which is {\cite[Section 1.3]{bru2019rectifiability}}, is crucial in the construction of modules tailored to particular measures.
\begin{thm}
	\label{finemodule}
	Let $(\XX,\dist,\mass)$ be a metric measure space and let $\mu$ be a Borel measure finite on balls such that $\mu\ll\capa$. Let also $\MM$ be a $\Lpc$-normed $\Lpc$-module. Define the natural (continuous) projection 
	\begin{equation}\notag
		\pi_\mu:\Lpc\rightarrow\Lp^0(\mu).
	\end{equation}
	We define an equivalence relation $\sim_\mu$ on $\MM$ as 
	\begin{equation}\notag
		v\sim_\mu w \text{ if and only if } \abs{v-w}=0 \quad \mu\text{-a.e.}
	\end{equation} 
	Define the quotient module $\MM_{\mu}^0\defeq{\MM}/{\sim_\mu}$ with the natural  (continuous) projection
	\begin{equation}\notag
		\pibar_\mu:\MM\rightarrow\MM_{\mu}^0.
	\end{equation}
	Then $\MM_{\mu}^0$ is a $\Lp^0(\mu)$-normed $\Lp^0(\mu)$-module, with the pointwise norm and product induced by the ones of $\MM$: more precisely, for every $v\in\MM$ and $g\in\Lpc$,
	\begin{equation}\label{defnproj}
		\begin{cases}
			\abs{\pibar_\mu(v)}\defeq\pi_\mu(\abs{v}),\\
			\pi_\mu(g)\pibar_\mu(v)\defeq\pibar_\mu( g v).
		\end{cases}
	\end{equation}
	
	If $p\in[1,\infty]$, we set
	\begin{equation}\notag
		\MM^{p}_{\mu}\defeq\left\{ v\in\MM^0_{\mu}:\abs{v}\in\Lp^p(\mu)\right\},
	\end{equation}
	which is a $\Lp^p(\mu)$-normed $\Lp^\infty(\mu)$-module.
	Moreover, if $\MM$ is a Hilbert module, also $\MM_\mu^0$ and $\MM_\mu^2$ are Hilbert modules. 
\end{thm}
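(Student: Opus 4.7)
The plan is to verify in sequence: (i) the structural definitions in \eqref{defnproj} descend to the $\sim_\mu$-quotient, (ii) $\MM_\mu^0$ satisfies the module and pointwise-norm axioms, (iii) $\MM_\mu^0$ is complete in the induced $\Lp^0(\mu)$-topology (the main technical step), and (iv) the conclusions about $\MM_\mu^p$ and the Hilbert case.

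For (i), if $v\sim_\mu v'$ the triangle inequality in $\MM$ forces $\abs v=\abs{v'}$ $\mu$-a.e., so $\pi_\mu(\abs v)$ depends only on $\pibar_\mu(v)$. If moreover $g,g'\in\Lpc$ agree $\mu$-a.e., the $\Lpc$-bilinearity of the pointwise norm yields
\[
\abs{gv-g'v'}\le\abs g\abs{v-v'}+\abs{g-g'}\abs{v'},
\]
which vanishes $\mu$-a.e., so $gv\sim_\mu g'v'$. Since every $\mu$-measurable function agrees $\mu$-a.e.\ with a Borel one, $\pi_\mu$ is surjective, so \eqref{defnproj} defines scalar multiplication on all of $\Lp^0(\mu)\times\MM_\mu^0$. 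The module axioms and the pointwise-norm axioms for $\MM_\mu^0$ then descend verbatim from those of $\MM$ through the additive map $\pibar_\mu$ and the ring homomorphism $\pi_\mu$.

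The crux is (iii). Let $\{[v_n]_\mu\}\subseteq\MM_\mu^0$ be Cauchy, i.e.\ $\abs{v_n-v_m}\to 0$ in $\Lp^0(\mu)$. Using $\sigma$-finiteness of $\mu$ and a diagonal extraction, pass to a subsequence (not relabeled) along which $\abs{v_{n+1}-v_n}\le 2^{-n}$ holds $\mu$-a.e. Set $w_n\defeq v_{n+1}-v_n\in\MM$ and consider the Borel set $B_n\defeq\{\abs{w_n}\le 2^{-n}\}$, so that $\mu(\XX\setminus B_n)=0$; by locality of $\MM$ the fields $\tilde w_n\defeq\chi_{B_n}w_n\in\MM$ satisfy $\abs{\tilde w_n}\le 2^{-n}$ $\capa$-a.e.\ and $\tilde w_n\sim_\mu w_n$. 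Hence the partial sums $S_N\defeq v_1+\sum_{n=1}^{N-1}\tilde w_n$ obey $\abs{S_N-S_M}\le 2^{-M+1}$ $\capa$-a.e.\ for $N>M$, so $\{S_N\}$ is Cauchy in $\MM$; completeness of $\MM$ produces $v\in\MM$ with $S_N\to v$, and applying $\pibar_\mu$ gives $[v]_\mu=\lim_n[v_n]_\mu$. The main obstacle is precisely this bridge: the $\mu$-a.e.\ Cauchy information does not directly yield $\capa$-a.e.\ summability, and the cutoff $\chi_{B_n}$—admissible because it alters $w_n$ only on a $\mu$-null set—is what allows the passage to a $\capa$-a.e.\ summable series inside $\MM$.

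For (iv), $\MM_\mu^p\subseteq\MM_\mu^0$ is closed under addition and under multiplication by $\Lp^\infty(\mu)$-scalars by the pointwise-norm inequalities, and it is complete in $v\mapsto\big\Vert\abs v\big\Vert_{\Lp^p(\mu)}$ by (iii) together with Fatou's lemma applied to the pointwise norms. Finally, if $\MM$ is a Hilbert module, the pointwise parallelogram identity $\abs{v+w}^2+\abs{v-w}^2=2\abs v^2+2\abs w^2$ holds $\capa$-a.e.\ in $\MM$; applying $\pi_\mu$ gives the identity $\mu$-a.e.\ in $\MM_\mu^0$ and in $\MM_\mu^2$, which is the Hilbertian conclusion.
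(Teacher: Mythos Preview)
Your structural verifications in (i), (ii), and (iv) are fine and match the paper's treatment. The gap is in step (iii), specifically in the extraction claim: from a Cauchy sequence in $\Lp^0(\mu)$ you cannot in general pass to a subsequence satisfying $\abs{v_{n+1}-v_n}\le 2^{-n}$ $\mu$-a.e.\ for \emph{each} $n$. A scalar counterexample on $([0,1],\LL^1)$ is $v_n=\chi_{[0,1/n]}$: for any subsequence $\{v_{n_k}\}$ the difference $\abs{v_{n_{k+1}}-v_{n_k}}$ equals $1$ on the interval $(1/n_{k+1},1/n_k]$, so the pointwise bound fails for every $k$. What one \emph{can} extract (via Chebyshev) is a subsequence with $\mu'(\{\abs{v_{n+1}-v_n}>2^{-n}\})<2^{-n}$ for an equivalent probability $\mu'$; but then $\mu(\XX\setminus B_n)>0$ in general, so your cutoff $\tilde w_n=\chi_{B_n}w_n$ is \emph{not} $\sim_\mu$-equivalent to $w_n$, and the accumulated discrepancy $S_N-v_N=\sum_{n<N}(\tilde w_n-w_n)$ does not vanish in $\Lp^0(\mu)$.

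The paper repairs exactly this point by passing to Borel--Cantelli tails: with $A_n\defeq\{\abs{w_n}>2^{-n}\}$ and $A^k\defeq\bigcup_{n\ge k}A_n$ one has $\mu(A^k)\to 0$, and on $\XX\setminus A^k$ the \emph{entire} tail $\{v_n\}_{n\ge k}$ (not a truncated version) is uniformly Cauchy in the $\capa$-pointwise norm, so $\{\chi_{\XX\setminus A^k}v_n\}$ converges in $\MM$ to some $w^k$. Gluing the $w^k$ over $k$ (they are compatible by locality) produces the limit in $\MM$ whose $\mu$-class is $\lim_n[v_n]_\mu$. Your cutoff idea is in the right spirit, but the cutoff must be applied to the tail union $A^k$ rather than to each $A_n$ separately.
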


The simple proof of this theorem was not given explicitly in the original article. For this reason we decided to include it here, as this result will play a central role in our note.
\begin{proof}[Proof of Theorem \ref{finemodule}]
	The fact that $\MM^0_{\mu}$ and, if $p\in[1,\infty]$, $\MM_{\mu}^p$ have a well defined structure of normed modules is a trivial verification. Also the statement concerning the Hilbertianity is trivial, as the pointwise parallelogram identity passes to the quotient. The continuity of the projection maps can be readily checked using the characterization of the topologies involved. We still have to verify that the modules $\MM_\mu^p$ are complete for $p\in\{0\}\cup[1,\infty]$. We start by showing that $\MM_\mu^0$ is complete.
	
	Recall that the distance on $\MM_\mu^0$ is defined as $$\dist_0(v,w)\defeq\int_\XX \abs{v-w}\wedge 1\dd{\mu'},$$
	where $\mu'\in\Prob(\XX)$ is such that $\mu\ll\mu'\ll\mu$. We can therefore assume that $\mu$ is finite and use $\mu$ in place of $\mu'$ in the definition of the distance $\dist_0$.
	Take now a Cauchy sequence $\{v_n\}_n\subseteq\MM_\mu^0$, we have to show that it has a convergent subsequence. Notice that we can use Chebyshev's inequality for $\epsilon\in(0,1)$ $$\mu(\{\abs{v-w}>\epsilon\})\le\frac{\dist_0(v,w)}{\epsilon}\quad\text{for every }v,w\in\MM_\mu^0,$$ to extract a (non relabelled) subsequence such that for every $n\in\NN$, $n\ge 1$, it holds
	$$ \mu(\{\abs{v_n-v_{n+1}}> 2^{-n}\})< 2^{-n}.$$
	Take $w_n\in\MM$ such that $v_n=\pibar(w_n)$ for every $n$. We set then $A_n\defeq \{\abs{w_n-w_{n+1}}>2^{-n}\}$ (using the $\capa$ pointwise norm) and $A^k\defeq\bigcup_{n\ge k} A_n$. Notice that $\mu(A^k)\rightarrow 0$ and that, if $n\ge k$, $\abs{w_n-w_{n+1}}\le 2^{-n}\ \capa$-a.e.\ on $\XX\setminus A^k$. We can therefore verify that $\{\chi_{\XX\setminus A^k}w_n\}_n\subseteq\MM$ is a Cauchy sequence, so that it has a limit $w^k$. Then it holds that ${\chi_{\XX\setminus A_k}v_n}=\pibar{(w_n\chi_{\XX\setminus A^k})}\rightarrow\pibar{(w^k)}$ in $\MM_\mu^0$. We can then use the gluing property to define $w\in\MM$ on $\XX\setminus \bigcap_k A^k$ setting $w=w^k$ on $\XX\setminus A^k$. We set $v\defeq\pibar{(w)}$ and we have that $v_n\rightarrow v$ in $\MM_\mu^0$. 
	
	As Chebyshev's inequality implies that Cauchy sequences with respect to the $\MM_\mu^p$ norm are Cauchy sequences with respect to the $\MM_\mu^0$ distance, we obtain the completeness also of $\MM_\mu^p$ for $p\in[1,\infty]$ with standard arguments.
\end{proof}
In the particular case in which $\MM=\tanXcap$ and $\mu$ is a Borel measure finite on balls such that $\mu\ll\capa$, we set
\begin{equation}\notag
	\tanbvXp{p}{\mu}\defeq(\tanXcap)_\mu^p\quad\text{for }p\in\{0\}\cup[1,\infty].
\end{equation} 
In the case $\mu=\mass$ notice that considering the map
$$\dot{\nabla}:\TestF(\XX)\stackrel{\nablatilde}{\longrightarrow}\tanXcap \stackrel{\pibar_\mass }{\longrightarrow}(\tanXcap)_\mass^0 $$we can show that $(\tanXcap)_\mass^0$ is isomorphic to the usual $\Lp^0$ tangent module via a map that sends $\nabla f$ to $\dot{\nabla} f$ so that we have no ambiguity of notation and, by construction, the map $\pibar_\mass$ coincides with $\Prbar$ defined in Proposition \ref{prbardef}.
We define the traces
\begin{alignat}{5}\notag
	&\tr_\mu:\HSsloc\rightarrow\Lp^0( \mu)&&\quad\text{as}\quad &&&\tr_\mu\defeq\pi_{\mu}\circ\qcr,\\\notag
	&\trbar_\mu:\WHCSs\rightarrow\tanbvXzero{\mu}&&\quad\text{as}\quad&&&\trbar_\mu\defeq \pibar_{\mu}\circ\qcrbar.
\end{alignat}

To simplify the notation, we often omit to write the trace operators. This should cause no ambiguity because from \eqref{qcrfactorizes} and \eqref{defnproj} it follows that

\begin{equation}\label{qcrfactorizes2}
	\trbar_\mu(g v)=\tr_\mu (g )\trbar_\mu(v) \quad\text{for every }g\in\HSsloc\cap\Lpi\text{ and } v\in\WHCSs\cap\tanXinf.
\end{equation}

We define
\begin{equation}\notag
	\TestV_\mu(\XX)\defeq\trbar_\mu(\TestV(\XX))\subseteq\tanbvXp{\infty}{\mu}
\end{equation}
and the proof of \cite[Lemma 2.7]{bru2019rectifiability} gives the following result.
\begin{lem}\label{densitytestbargen}
	Let $(\XX,\dist,\mass)$ be an $\RCD(K,\infty)$ space and let $\mu$ be a finite Borel measure such that $\mu\ll\capa$. Then $\TestV_\mu(\XX)$ is dense in $\tanbvXp{p}{\mu}$ for every $p\in[1,\infty)$.
\end{lem}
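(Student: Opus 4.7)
The plan is to first reduce, by truncation, to a vector field $v$ with uniformly bounded pointwise norm; then to approximate $v$ by finite simple combinations of the form $\sum_n \chi_{E_n}\nablatilde f_n$ using Theorem \ref{tancapa}(ii); and finally to replace the indicator coefficients by traces of bounded Lipschitz functions, invoking \eqref{qcrfactorizes2}. The main obstacle will be the second step: one must simultaneously truncate both the number of summands and their pointwise norm while preserving $\tanXcap$-convergence, so that after projecting via $\pibar_\mu$ the uniform $\Lp^\infty(\mu)$-bound lets us upgrade convergence in $\mu$-measure to $\Lp^p(\mu)$-convergence.

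For the truncation step, given $v\in\tanbvXp{p}{\mu}$ I would observe that $\chi_{\{|v|\le N\}}v\to v$ in $\tanbvXp{p}{\mu}$ by dominated convergence (exploiting $\mu(\XX)<\infty$ and $|v|\in\Lp^p(\mu)$), so we may assume $|v|\le N$ $\mu$-a.e. After choosing a lift $\tilde v\in\tanXcap$ with $\pibar_\mu(\tilde v)=v$, the set $\{|\tilde v|>N\}$ is $\mu$-null; replacing $\tilde v$ by $\chi_{\{|\tilde v|\le N\}}\tilde v$ therefore does not change its $\mu$-projection, and we may further assume $|\tilde v|\le N$ $\capa$-a.e.

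For the approximation step, Theorem \ref{tancapa}(ii) provides finite simple combinations $w_k=\sum_{n=1}^{N_k}\chi_{E_{n,k}}\nablatilde f_{n,k}$ (with $f_{n,k}\in\TestF(\XX)$ and $\{E_{n,k}\}_n$ pairwise disjoint Borel) converging to $\tilde v$ in $\tanXcap$. Setting $A_k\defeq\{|w_k|\le 2N\}$ and observing that on $A_k^c$ one has $|\tilde v|\le N<|w_k|/2\le |w_k-\tilde v|$ $\capa$-a.e., I get $|\chi_{A_k}w_k-\tilde v|\le 2|w_k-\tilde v|$; substituting $E_{n,k}\cap A_k$ for $E_{n,k}$ we still obtain finite simple combinations, now satisfying $|w_k|\le 2N$ $\capa$-a.e. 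Continuity of $\pibar_\mu$ (together with $\mu\ll\capa$) yields $\pibar_\mu(w_k)\to v$ in $\tanbvXp{0}{\mu}$, and this uniform pointwise bound combined with $\mu(\XX)<\infty$ promotes the convergence to $\tanbvXp{p}{\mu}$ for every $p\in[1,\infty)$.

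By linearity it now suffices to approximate each $\pibar_\mu(\chi_E\nablatilde f)=\pi_\mu(\chi_E)\trbar_\mu(\nablatilde f)$ in $\tanbvXp{p}{\mu}$ by elements of $\TestV_\mu(\XX)$, for $E\subseteq\XX$ Borel and $f\in\TestF(\XX)$. Since $\mu$ is a finite Borel measure on a complete separable metric space, $\LIPbs(\XX)$ is dense in $\Lp^p(\mu)$, so I would pick $h_k\in\LIPbs(\XX)$ with $0\le h_k\le 1$ and $h_k\to\chi_E$ in $\Lp^p(\mu)$. Each $h_k\in\Ss\cap\Lpi$, hence $h_k\nabla f\in\TestV(\XX)$, and \eqref{qcrfactorizes2} gives $\trbar_\mu(h_k\nabla f)=\tr_\mu(h_k)\trbar_\mu(\nablatilde f)\in\TestV_\mu(\XX)$. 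Since $\nablatilde f\in\Cqcvfinf$, $\trbar_\mu(\nablatilde f)$ has $\mu$-essentially bounded pointwise norm, hence
\[
|\tr_\mu(h_k)\trbar_\mu(\nablatilde f)-\pi_\mu(\chi_E)\trbar_\mu(\nablatilde f)|\le \|\trbar_\mu(\nablatilde f)\|_{\Lp^\infty(\mu)}|\tr_\mu(h_k)-\pi_\mu(\chi_E)|
\]
tends to zero in $\Lp^p(\mu)$, concluding the argument.
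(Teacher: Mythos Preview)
Your proof is correct and follows the natural density argument one would expect here: truncate to bounded pointwise norm, approximate by simple combinations coming from Theorem~\ref{tancapa}(ii), truncate again to keep a uniform $\Lp^\infty$ bound, and finally replace characteristic coefficients by bounded Lipschitz functions via \eqref{qcrfactorizes2}. The paper does not give its own argument but defers to \cite[Lemma 2.7]{bru2019rectifiability}, whose proof proceeds along the same lines.

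One minor point of precision: Theorem~\ref{tancapa}(ii) literally gives density of (possibly infinite) sums $\sum_n \chi_{E_n}\nablatilde f_n$ over a Borel partition, not of finite simple combinations. Your reduction to finite sums is harmless, though, since after projecting via $\pibar_\mu$ the finite truncations $\sum_{n\le N}\chi_{E_n}\nablatilde f_n$ converge (the tail is supported on $\bigcup_{n>N}E_n$, which decreases to $\emptyset$, and $\mu$ is finite). Also, where you write $\trbar_\mu(\nablatilde f)$ you mean $\trbar_\mu(\nabla f)=\pibar_\mu(\nablatilde f)$, which is consistent with the paper's convention of suppressing $\qcrbar$.
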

It is natural to denote
\begin{equation}\notag
	\TestV_F(\XX)\defeq\trbar_{F}(\TestV(\XX))
\end{equation}
and Lemma \ref{densitytestbargen} reads as follows.
\begin{lem}\label{densitytestbar}
	Let $(\XX,\dist,\mass)$ be an $\RCD(K,\infty)$ space and $F\in\BVv^n$. Then $\TestV_F(\XX)$ is dense in $\tanbvX{F}$.
\end{lem}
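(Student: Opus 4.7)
The strategy is to reduce \cref{densitytestbar} to the general statement \cref{densitytestbargen} by taking $\mu \defeq \abs{\DIFF F}$ and $p = 2$. Indeed, with the notational conventions introduced after \cref{finemodule}, $\tanbvX{F}$ coincides with $\tanbvXp{2}{\abs{\DIFF F}}$, and $\TestV_F(\XX) = \trbar_F(\TestV(\XX))$ is just $\TestV_{\abs{\DIFF F}}(\XX)$ by the natural abuse of notation that writes $F$ in place of the measure $\abs{\DIFF F}$. Hence it suffices to check the hypotheses of \cref{densitytestbargen}, namely that $\abs{\DIFF F}$ is a finite Borel measure with $\abs{\DIFF F}\ll\capa$.

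Finiteness is built into the definition $F\in\BVv^n$. For the absolute continuity, I would reduce to the scalar case \cref{diffllcapthm} via a sandwich between $\abs{\DIFF F}$ and the scalar total variations $\abs{\DIFF F_i}$. Comparing the $i$-th local Lipschitz constant $\lip(F_{i,k})$ with the Euclidean norm $\Vert(\lip(F_{j,k}))_j\Vert_e$ in the infimum \eqref{defntvvector}, and using the subadditivity-type inequality $\Vert(\lip(F_{j,k}))_j\Vert_e \le \sum_j \lip(F_{j,k})$ together with a diagonal choice of approximating sequences, one obtains
\begin{equation*}
\abs{\DIFF F_i}(A)\;\le\;\abs{\DIFF F}(A)\;\le\;\sum_{j=1}^n \abs{\DIFF F_j}(A)
\end{equation*}
for every open $A\subseteq\XX$, and hence as Borel measures. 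The left inequality shows that each component $F_i$ belongs to $\BVv$, so \cref{diffllcapthm} gives $\abs{\DIFF F_i}\ll\capa$; the right inequality then propagates this absolute continuity to $\abs{\DIFF F}$.

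Once $\abs{\DIFF F}\ll\capa$ is in place, \cref{densitytestbargen} applies verbatim with $\mu=\abs{\DIFF F}$ and $p=2$, yielding the density of $\TestV_F(\XX)$ in $\tanbvX{F}$. There is no genuine obstacle in the argument: the heavy lifting is done by \cref{diffllcapthm} on the scalar side and by the density statement of \cref{densitytestbargen} (extracted from \cite{bru2019rectifiability}) on the module side, and what remains is the routine verification sketched above.
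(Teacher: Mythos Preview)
Your proposal is correct and follows exactly the paper's approach: the paper states Lemma \ref{densitytestbar} immediately after Lemma \ref{densitytestbargen} with the remark that the former is nothing but the latter read with $\mu=\abs{\DIFF F}$, and the absolute continuity $\abs{\DIFF F}\ll\capa$ is justified (as you do) via $\abs{\DIFF F}\le\sum_i\abs{\DIFF F_i}$ together with Theorem \ref{diffllcapthm}. Your write-up simply makes explicit the routine verifications that the paper leaves implicit.
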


\bigskip 

We also need Cartesian products of normed modules.
Fix $n\in\NN$, $n\ge 1$ and denote by $\Vert\,\cdot\,\Vert_e$ the Euclidean norm of $\RR^n$. 
Given a $\Lpo$-normed $\Lpo$-module $\mathcal{N} $, we can consider its Cartesian product $\mathcal{N}^n$ and endow it with the natural module structure and with the pointwise norm
$$\abs{(v_1,\dots,v_n)}\defeq\Vert (\abs{v_1},\dots,\abs{v_n})\Vert_e$$
which is induced by a scalar product if and only if the one of $\mathcal{N} $ is, and if this is the case, we still denote the pointwise scalar product on $\mathcal{N}^n$ by $\,\cdot\,$. We endow $\mathcal{N}^n$ with the norm induced by the Lebesgue norm of the relevant exponent of the pointwise norm with respect to $\mass$.
Also, $\mathcal{N}^n$ is a $\Lpt$-normed $\Lpi$-module if and only if $\mathcal{N}$ is, and, if this is the case, a subspace $\mathcal{N}_1$ of $\mathcal{N}$ is dense if and only if $(\mathcal{N}_1)^n$ is dense in $\mathcal{N}^n$. Similar considerations hold if $\mass$ is replaced by a Borel measure, finite on balls and (with the suitable interpretation) in the case of $\Lpc$-normed $\Lpc$-modules or if we alter the integrability exponent. It is clear that if $\MM$ is a $\Lpc$-normed $\Lpc$-module and $\mu$ is a Borel measure finite on balls such that $\mu\ll\capa$, then also $$(\MM_\mu^p)^n=(\MM^n)_\mu^p\quad\text{for }p\in\{0\}\cup[1,\infty].$$
We adopt the natural notation
$$ \mathrm{L}_\mu^p(T^{ n}\XX)\defeq \tanbvXp{p}{\mu}^{ n}$$
and, when possible, we endow  $\mathrm{L}_\mu^p(T^{ n}\XX)$ with the norm induced by the $ \mathrm{L}^p(\mu)$ norm of the (Euclidean) pointwise norm $\abs{\,\cdot\,}$.

The following remark will be used in the sequel without further notice: if $v=(v_1,\dots,v_n)\in\mathcal{N}^n$ is such that for every $i=1,\dots,n$, $v_i\in\HSs$, then $\abs{v}\in\HSs$. This follows from the fact that if $f_1, \dots, f_n\in\HSs$ and $\varphi\in\LIP(\RR^n;\RR)$ is such that $\varphi(0)=0$, then $\varphi(f_1,\dots,f_n)\in\HSs$.

\subsection{The distributional differential}
\label{sectBVRCD}

Let $(\XX,\dist,\mass)$ be an $\RCD(K,\infty)$ space and let $F\in\BVv^n$, where $n\in\NN$, $n\ge 1$ is fixed. Recall that \eqref{diffllcapeq} states that $\abs{\DIFF F}\ll\capa$.
Therefore we can repeat the construction performed in  \cite[Section 2]{bru2019rectifiability} (see also \cite[Subsection 1.3]{bru2019rectifiability}): we employ Theorem \ref{finemodule} to obtain the couple of modules  $\tanbvXzero{F}\defeq\tanbvXzero{\abs{\DIFF F}}$ and $\tanbvX{F}\defeq\tanbvX{\abs{\DIFF F}}$ as well as the couple of traces $\tr_F\defeq\tr_{\abs{\DIFF F}}$ and $\trbar_F\defeq\trbar_{\abs{\DIFF F}}$, which we will often omit to write.

\bigskip 

In view of the following theorem, recall that the interpretation of the integral in \eqref{difffloccaompeq2} is given by Remark \ref{interpretationint}. Recall also \eqref{Cqcvfdef}.

\begin{thm}\label{weakder2}
	Let $(\XX,\dist,\mass)$ be an $\RCD(K,\infty)$ space and $F\in\BVv^n$.
Then there exists a unique vector field $\nu_F\in\mathrm{L}_F(T^{ n}\XX)$ such that it holds 
	\begin{equation}\label{difffloccaompeq2}
		\sum_{i=1}^n\int_\XX F_i\dive\, v_i=-\int_\XX v\,\cdot\,\nu_F\dd{\abs{\DIFF F}}\quad\text{for every }v=(v_1,\dots,v_n)\in (\Cqcvfinf\cap D(\dive))^n.
	\end{equation}
	Moreover, $\abs{\nu_F}=1\ \abs{\DIFF F}$-a.e.
\end{thm}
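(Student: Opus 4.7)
The plan is to construct $\nu_F$ as the Riesz representative of a bounded linear functional on the Hilbert module $\tanbvXn{F}$. Start by defining
\[
L(v_1,\ldots,v_n) := -\sum_{i=1}^n \int_\XX F_i \dive v_i \dd{\mass}, \qquad v \in \TestV(\XX)^n,
\]
and aim for the pointwise bound
\[
|L(v)| \le \int_\XX |\trbar_F(v)| \dd{|\DIFF F|},
\]
whose right-hand side is meaningful because $|\DIFF F| \ll \capa$ by Theorem \ref{diffllcapthm} and $v$ admits a quasi-continuous representative through $\qcrbar$. Granted this bound, $L$ factors through $\trbar_F$ to a continuous linear map on $\TestV_F(\XX)^n$ with $\mathrm{L}^1_F$-operator norm at most $1$; by Lemma \ref{densitytestbar} and finiteness of $|\DIFF F|$, this subspace is dense in $\tanbvXn{F}$, so Riesz representation delivers a unique $\nu_F \in \tanbvXn{F}$ with $|\nu_F| \le 1$ $|\DIFF F|$-a.e.\ verifying \eqref{difffloccaompeq2} on test vector fields. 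The identity propagates to every $v \in (\Cqcvfinf \cap D(\dive))^n$ by approximation, using that $\Cqcvfinf$ is the closure of $\TestVbar(\XX)$ and that $\trbar_F$ is continuous with values in the capacitary module.

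To prove the bound, fix $v \in \TestV(\XX)^n$ with compact support and $\epsilon > 0$. Since $\qcr(|v|)$ is non-negative, $\capa$-measurable and integrable against $|\DIFF F|$, outer regularity arguments exploiting $|\DIFF F| \ll \capa$ together with quasi-continuity of $|v|$ permit the choice of pairwise disjoint open sets $\{U_j\}_{j=1}^N$ covering $\supp v$ up to an $|\DIFF F|$-null set, and positive constants $\alpha_j$, such that $\qcr(|v|) \le \alpha_j$ $\capa$-a.e.\ on $U_j$ and
\[
\sum_{j=1}^N \alpha_j |\DIFF F|(U_j) \le \int_\XX |\trbar_F(v)| \dd{|\DIFF F|} + \epsilon.
\]
Select a subordinate partition of unity $\{\eta_j\}_{j=1}^N \subset \Ss(\XX) \cap \Lpi$ with $0 \le \eta_j \le 1$, $\supp \eta_j \Subset U_j$, and $\sum_j \eta_j = 1$ on $\supp v$. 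By Lemma \ref{calculushodge} and the stability of $\TestV(\XX)$ under multiplication by $\Ss \cap \Lpi$-functions, each $w^{(j)} := \eta_j v / \alpha_j$ lies in $\TestV(\XX)^n$, satisfies $|w^{(j)}| \le 1$ a.e., and has $\supp |w^{(j)}| \Subset U_j$. Proposition \ref{reprvett1} yields $|L(w^{(j)})| \le |\DIFF F|(U_j)$, and linearity combined with $v = \sum_j \eta_j v$ on $\supp v$ gives $|L(v)| \le \sum_j \alpha_j |\DIFF F|(U_j) \le \int_\XX |\trbar_F(v)| \dd{|\DIFF F|} + \epsilon$.

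The property $|\nu_F| = 1$ $|\DIFF F|$-a.e.\ follows from Proposition \ref{reprvett1}: take $v^{(k)} \in \TestV(\XX)^n$ with $|v^{(k)}| \le 1$ a.e.\ and $-L(v^{(k)}) \to |\DIFF F|(\XX)$; the representation gives $\int v^{(k)} \cdot \nu_F \dd{|\DIFF F|} \to |\DIFF F|(\XX)$, which combined with $v^{(k)} \cdot \nu_F \le |\nu_F| \le 1$ $|\DIFF F|$-a.e.\ forces $|\nu_F| = 1$. The main obstacle is the pointwise bound on $L$: one must construct the decomposition $\{U_j, \alpha_j, \eta_j\}$ so that each rescaled piece $\eta_j v/\alpha_j$ is a genuine competitor for Proposition \ref{reprvett1} (a compactly supported test vector field with pointwise norm $\le 1$), while the total $\sum_j \alpha_j |\DIFF F|(U_j)$ is controlled by the integral of the capacitary representative $\qcr(|v|)$. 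Reconciling the measure-theoretic bookkeeping on the $\Lpi(\mass)$ side, where the divergence lives, with the capacitary/quasi-continuous bookkeeping on the trace side, where $\trbar_F(v)$ is measured, is the delicate technical point.
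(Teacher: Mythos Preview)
Your strategy—build $\nu_F$ as the Riesz representative of $L$ on $\tanbvXn{F}$—is different from the paper's, which assembles $\nu_F$ component-wise from the scalar polar vectors $\nu_{F_i}$ (imported from \cite{bru2019rectifiability}) via the Radon--Nikodym densities $g_i=\tfrac{d|\DIFF F_i|}{d|\DIFF F|}$, and only \emph{afterwards} checks $|\nu_F|\le 1$ by testing against approximations of $\nu_F/|\nu_F|$. Your route is conceptually cleaner but front-loads the hard inequality $|L(v)|\le\int|\trbar_F v|\,d|\DIFF F|$, and the partition-of-unity argument you sketch for it does not close.

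The gap is exactly where you flag it. You ask for pairwise disjoint \emph{open} $U_j$ with $\qcr(|v|)\le\alpha_j$ $\capa$-a.e.\ on $U_j$ and $\sum_j\alpha_j|\DIFF F|(U_j)$ close to $\int|v|\,d|\DIFF F|$. Quasi-continuity only tells you the sublevel sets $\{|v|<\alpha\}$ are quasi-open: they agree with open sets up to a set of small capacity. On that exceptional set $|v|$ may be as large as $\|v\|_\infty$, so after enlarging to genuine open sets you lose the pointwise bound needed to feed $\eta_j v/\alpha_j$ into Proposition~\ref{reprvett1}. Even granting the cover, you then ask for $\sum_j\eta_j=1$ on $\supp v$ while the $U_j$ only cover $\supp v$ up to a $|\DIFF F|$-null set; the leftover piece $v-\sum_j\eta_j v$ is supported on that null set, but $L$ sees its divergence, not its $|\DIFF F|$-trace, so you cannot discard it. (One can show finite $|\DIFF F|\ll\capa$ does satisfy an $\epsilon$--$\delta$ condition via Borel--Cantelli, but this alone does not rescue the construction.)

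There is a second gap in the extension step. Density of $\TestVbar(\XX)$ in $\Cqcvf$ gives approximants $v^k\to v$ in the capacitary module, which handles the right side of \eqref{difffloccaompeq2}, but says nothing about $\dive v_i^k$; you cannot pass to the limit in $\int F_i\dive v_i^k$. The paper's Step~2 resolves this with the Hodge heat flow: $v_k:=\psi\,e^{Kt_k}\heat_{\mathrm H,t_k}v$ converges to $v$ in $\tanXcap$ \emph{and} satisfies $\dive v_k=\psi\,e^{Kt_k}\heat_{t_k}\dive v+\nabla\psi\cdot(\cdots)\to\dive v$ in $\Lpt$, precisely because of the commutation $\dive\circ\heat_{\mathrm H,t}=\heat_t\circ\dive$. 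Some mechanism of this kind is unavoidable here.
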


\begin{proof} 
	We divide the proof in several steps.
	\\\textsc{Step 1}. We show that if $f\in\BVv\cap\Lpi$, then there exists a unique $\nu_f\in \mathrm{L}_f(T\XX)$ such that 
		\begin{equation}\label{difffloccaompeq2scarsa}
		\int_\XX f\dive\, v=-\int_\XX v\,\cdot\,\nu_f\dd{\abs{\DIFF f}}\quad\text{for every }v\in\WHCSs\cap D(\dive)\cap\tanXinf,
	\end{equation}
and moreover $\abs{\nu_f}=1\ \abs{\DIFF f}$-a.e.
	This can be proved following \textit{verbatim} the proof of \cite[Theorem 2.2]{bru2019rectifiability}. Notice that  in \cite{bru2019rectifiability} the assumption that the dimension was finite could have been dropped taking into account Theorem \ref{diffllcapthm} and Proposition \ref{bakryemry}.
	\\\textsc{Step 2}. We show that for $f\in\BVv\cap\Lpi$, \eqref{difffloccaompeq2scarsa} holds for every $v\in\Cqcvfinf\cap D(\dive)$. 
		Fix then $v\in \Cqcvfinf\cap D(\dive)$. By an easy cut-off argument, there is no loss of generality in assuming that $v$ has bounded support.  We take a sequence $\{t_k\}_k\subseteq(0,1)$ with $t_k\searrow 0$, then, we define $\{v_k\}_k$ as $v_k\defeq \psi e^{K t_k}\heat_{\mathrm{H},t_k} v$, where $\psi\in\LIPbs(\XX)$ (say $\supp\psi\Subset B$ for some ball $B$) is a cut-off function that is identically $1$ on a neighbourhood of $\supp v$. Now notice that the equality in \eqref{difffloccaompeq2scarsa} holds for $v_k$ and that, as $\dive\, v_k\rightarrow\dive\, v$ in $\Lpt$, it suffices to check that $v_k\rightarrow v$ in $\tanXcap$, then dominated convergence together with the theory developed in \cite{debin2019quasicontinuous} implies the conclusion.
	
	We thus have to show that $v_k\rightarrow v$ in in $\tanXcap$.  We can assume with no loss of generality that $\abs{v}\le 1\ \mass$-a.e.\ so that also $|v_k|\le 1\ \mass$-a.e. for every $k$. As $v\in\Cqcvf$, we can take a sequence $\{w_l\}_l\subseteq\TestV(\XX)$ such that $w_l\rightarrow v$ in $\tanXcap$, $\supp w_l\Subset B$ and $\abs{w_l}\le 1\ \mass$-a.e.  We claim that 
	\begin{equation}\label{unifk}
		w_{l,k}\defeq \psi e^{K t_k}\heat_{\mathrm{H},t_k} w_l\rightarrow \psi e^{K t_k}\heat_{\mathrm{H},t_k} v= v_k\text{ uniformly in $k$} \quad\text{in } \tanXcap \text{ as $l\rightarrow\infty$}.
	\end{equation} 
	Fix for the moment $\epsilon>0$. As $w_l\rightarrow v$ in $\tanXcap$ and the fact that all these vector fields have uniformly bounded support, we can take functions $\{g_l\}_l\subseteq\HSs$ such that $\Vert g_l\Vert_{\HSs} \rightarrow 0$, $g_l(x)\in[0,1]$ for $\mass$-a.e.\ $x$ and
	$$\left\{|{w_l-v}|>\epsilon\right\}\subseteq\left\{g_l\ge 1\right\} .$$
	Therefore, taking into account that 
	\begin{equation}\notag
		\begin{split}
			|w_{l,k}-v_k|&\le \chi_B\left( e^{K t_k} |\heat_{\mathrm{H},t_k}\left((w_l-v)\chi_{\{|w_l-v|>\epsilon\}}\right)|+e^{K t_k}|\heat_{\mathrm{H},t_k}\left((w_l-v)\chi_{\{|w_l-v|\le\epsilon\}}\right)|\right) \\&\le\chi_B\left( \heat_{t_k}(|(w_l-v)\chi_{\{|w_l-v|>\epsilon\}}|)+\heat_{t_k}(|(w_l-v)\chi_{\{|w_l-v|\le\epsilon\}}|)\right)\le 2\chi_B \heat_{t_k} g_l+2 \epsilon
		\end{split}
	\end{equation}
	and that 
	$$  \Vert \heat_{t_k} g_l\Vert_{\HSs}\le \Vert g_l\Vert_{\HSs}\rightarrow 0\text{ uniformly in $k$}\quad\text{as }l\rightarrow\infty,$$
	it follows that, uniformly in $k$,
	$$\limsup_l \capa\left\{|w_{l,k}-v^k|> 4\epsilon \right\} \le \limsup_l \capa\left(B\cap \left\{|\heat_{t_k} g_l|> \epsilon \right\}\right)=0$$
	and hence \eqref{unifk} follows. Now we can conclude easily, noticing that $$\dist_{\tanXcap}(v_k,v)\le \dist_{\tanXcap}(v_k,w_{l,k})+\dist_{\tanXcap}(w_{l,k},w_l)+\dist_{\tanXcap}(w_{l},v) $$
	as we can first take $l$ large enough  to estimate the first and last summand (uniformly in $k$)
	and then let $k\rightarrow \infty$, recalling that as $w_{l,k}\rightarrow w_l$ in $\WHHSs$,  $w_{l,k}\rightarrow w_l$ in $\tanXcap$.
		\\\textsc{Step 3}. We drop the $\Lpi$ bound assumption on $f$ made in Step 1. For $k\in\mathbb{Z}$, define $$f_k\defeq (f\vee k)\wedge (k+1)-(k+\chi_{\{k< 0\}}(k)).$$
Notice that for every $k\in\mathbb{Z}$, $\abs{\DIFF f_k}\le\abs{\DIFF f}$ and, as $f_k\in\BVv\cap\Lpi$, there exists $\nu_{f_k}\in\tanbvX{f_k}$ such that $\abs{\nu_{f_k}}=1\ \abs{\DIFF f_k}$-a.e.\ and
\begin{equation}\label{ggscarsa}
	\int_\XX f_k  \dive\, v\dd{\mass}=-\int_\XX v\,\cdot\,\nu_{f_k} \dd{\abs{\DIFF f_k}}
\end{equation}
for every $v\in \Cqcvfinf\cap D(\dive)$.

Notice that, if we consider ${\nu}_{f_k}$ as an element of $\tanXcap$, we have that $\pibar_{f}( {\nu}_{f_k})\dv{\abs{\DIFF f_k}}{\abs{\DIFF f}}$ is well defined.
Now notice that by \eqref{coareaeqdiff} it holds $\abs{\DIFF f}=\sum_{k\in\mathbb{Z}}\abs{\DIFF f_k}$.
Then, as $$\bigg\Vert\pibar_{f}( \tilde{\nu}_{f_k})\dv{\abs{\DIFF f_k}}{\abs{\DIFF f}}\bigg\Vert_{\tanbvX{f}} \le \abs{\DIFF f_k}(\XX),$$  and the completeness of $\tanbvX{f}$, we see that $$\nu_f\defeq\sum_{k\in\mathbb{Z}}\pibar_{f}( \tilde{\nu}_{f_k})\dv{\abs{\DIFF f_k}}{\abs{\DIFF f}}$$ 
is a well defined element of $\tanbvX{f}$ and satisfies $\abs{\nu_f}\le 1\ \abs{\DIFF f}$-a.e. 

Fix now $v\in \Cqcvfinf\cap D(\dive)$.
We prove the integration by parts formula \eqref{difffloccaompeq2scarsa} for $f$ and $v$. Using \eqref{ggscarsa} we have,
\begin{equation}\notag
	\begin{split}
		&\int_\XX f  \dive\, v\dd{\mass}=\lim_m\int_\XX \sum_{k=-m}^{m-1} f_k  \dive\, v\dd{\mass}\\&\quad=-\lim_m\int_\XX v\,\cdot\,\sum_{k=-m}^{m-1}\pibar_{f}(\tilde{\nu}_{f_k}) \dv{\abs{\DIFF f_k}}{\abs{\DIFF f}}\dd{\abs{\DIFF f}}=-\int_\XX v\,\cdot\,\nu_f\dd{\abs{\DIFF f}},
	\end{split}
\end{equation} 
where the above computation also shows the existence of the limit $$\lim_m\int_\XX (f\vee -m)\wedge m\, \dive\,v \dd{\mass}=\lim_m\int_\XX \sum_{k=-m}^{m-1} f_k  \dive\, v\dd{\mass}.$$
Now we can show that $\abs{\nu_f}\ge 1\ \abs{\DIFF f}$-a.e.\ arguing as in the proof of \cite[Theorem 2.2]{bru2019rectifiability}. Finally, uniqueness of $\nu_f$ follows from Lemma \ref{densitytestbar}. All in all, we have proved the theorem in the case $n=1$.
\\\textsc{Step 4}.
We prove the theorem for $n>1$. Notice that $\abs{\DIFF F}$ is a finite measure such that 
\begin{equation}\notag
\abs{\DIFF F}\le \abs{\DIFF F_1}+\dots+\abs{\DIFF F_n}\ll \capa.
\end{equation}
We can therefore define $\tanbvXzero{F}\defeq\tanbvXzero{\abs{\DIFF F}}$ and $\tanbvX{F}\defeq\tanbvX{\abs{\DIFF F}}$ as well as the traces $\tr_F\defeq\tr_{\abs{\DIFF F}}$ and $\trbar_F\defeq\trbar_{\abs{\DIFF F}}$ (that, as usual, will not be written). 
Also, taking into account that $\abs{\DIFF F_i}\le \abs{\DIFF F}$, we can write, thanks to the Radon-Nikodym Theorem,
\begin{equation}\notag
\abs{\DIFF F_i}=g_i\abs{\DIFF F}\quad\text{for every }i=1,\dots,n 
\end{equation}
where $g_i\in\Lp^\infty(\XX,\abs{\DIFF F})$.
This discussion and the construction done exploiting Theorem \ref{finemodule} imply that for every $i=1,\dots,n$ we can set
\begin{equation}\notag
\nu_i\defeq g_i\pibar_{F}({{\nu}_{F_i}})\in\tanbvX{F},
\end{equation} 
where we are considering ${\nu}_{F_i}$ as a element of $\tanXcap$. Notice that as ${\nu}_{F_i}$ is well defined $\abs{\DIFF F_i }$-a.e. $\nu_i$ is well defined and that $\abs{\nu_i}=\abs{g_i}\ \abs{\DIFF F}$-a.e. 

We set $\nu_F\defeq(\nu_1,\dots,\nu_n)\in\mathrm{L}^2_F(T^{ n}\XX)$ and therefore \eqref{difffloccaompeq2} holds. As uniqueness of such $\nu_F$ follows from the same fact in the unidimensional case, to conclude it remains only to show that 
\begin{equation}\notag
	\abs{\nu_F}=1\quad{\abs{\DIFF F}}\text{-a.e.}
\end{equation}

By density, take a sequence $\{w^k=(w_1^k,\dots,w_n^k)\}\subseteq\TestV(\XX)^n$ such that $w_k\rightarrow \chi_{\{\abs{\nu_F}> 0\}}\frac{\nu_F}{\abs{\nu_F}}$ in $\mathrm{L}^2_F(T^{ n}\XX)$. Then define $\{v^k\}_k\subseteq \TestV(\XX)^n$ as $$v^k\defeq \frac{1}{1\vee\abs{w^k}}w^k.$$ Notice that still $v^k\rightarrow \chi_{\{\abs{\nu_F}> 0\}}\frac{\nu_F}{\abs{\nu_F}}$ and moreover $|{v^k}|\le 1\ \mass$-a.e. 
Let $A\subseteq \XX$ open and
take a sequence $\{\psi_k\}_k\subseteq\LIPbs(\XX)$ such that $\psi_k(x)\in[0,1]$ for every $x\in\XX$, $\supp\psi_k\subseteq A$ and $\psi_k(x)\nearrow 1$ for every $x\in A$. 
By Proposition \ref{reprvett1} and \eqref{difffloccaompeq2} we can compute  
$$\abs{\DIFF F}(A)\ge - \sum_{i=1}^n \int_\XX F_i\dive(\psi_k v_i^k)\dd{\mass}=\int_\XX \psi_k v^k\,\cdot\,\nu_F\dd{\abs{\DIFF F}}\rightarrow \int_A \abs{\nu_F}\dd{\abs{\DIFF F}},$$
and this shows that $\abs{\nu_F}\le 1\ \abs{\DIFF F}$-a.e. 	
	Now notice that Proposition \ref{reprvett1} and \eqref{difffloccaompeq2} again imply  that
	$$\int_\XX\abs{\nu_F}\dd{\abs{\DIFF F}}\ge 1$$ so that we conclude.
\end{proof}

\begin{rem}\label{normcomp}
	Let $F=(F_1,\dots,F_n)\in\BVv^n$. Then,
	\begin{equation}\label{coordnorm}
		(\nu_F)_i =\dv{\abs{\DIFF F_i}}{\abs{\DIFF F}}\nu_{F_i} \quad\abs{\DIFF F}\text{-a.e.\ for every }i=1,\dots,n,
	\end{equation}
which is an immediate consequence of \eqref{difffloccaompeq2}.
\fr
\end{rem}

\begin{rem}
	We show that if $F\in\BVv^n$, then there exists a sequence
				 $$\{v^k=(v_1^k,\dots,v_n^k)\}_k\subseteq\mathcal{W}^n$$where$$\mathcal{W}^n\defeq\Big\{v=(v_1,\dots v_n)\in\WHHSsn:\abs{v}\le 1\ \mass\text{-a.e.\ } \dive\,v_i\in\Lpi\text{ for every }i=1,\dots,n\Big\}$$
such that $v^k\rightarrow \nu_F$ in $\tanbvXn{F}$.
	
	Indeed, we can modify the proof of Proposition \ref{reprvett1} (see in particular its Step 3), replacing $\TestV(\XX)^n$ with $\mathcal{W}^n$ in \eqref{mathvvet}  and then it is enough to take a sequence $\{v^k=(v^k_1,\dots,v_n^k)\}_k\subseteq\mathcal{W}^n$  such that (with the usual interpretation for the integral) $$\sum_{i=1}^n\int_\XX F_i \dive\, v^k_i\dd{\mass}\rightarrow-\abs{\DIFF F}(\XX)$$
	and compute
	\begin{equation*}\notag
		\begin{split}
			&\int_\XX |{v^k-\nu_F}|^2\dd{\abs{\DIFF F}}=\int_\XX |{v^k}|^2\dd{\abs{\DIFF F}}+\int_\XX \abs{\nu_F}^2\dd{\abs{\DIFF F}}-2 \int_\XX v^k\,\cdot\,\nu_F\dd{\abs{\DIFF F}}\\&\quad\le 2\abs{\DIFF F}(\XX)+2 \sum_{i=1}^n\int_\XX F_i \dive\, v^k_i\dd{\mass}\rightarrow 0,
		\end{split}
	\end{equation*}
	where in the last inequality we used \eqref{difffloccaompeq2}. Notice that this argument works only to approximate $\nu_F$, and the difficulty in approximating other elements of $\tanbvXn{F}$ lies in the request of essentially bounded divergence request.
	\fr
\end{rem}

\bigskip

We introduce now a formal framework to denote the distributional differential of functions of bounded variation and to be able to perform algebraic manipulations on this object. In a forthcoming work, we will introduce the concept of module valued measure, which is a suitable notion to describe the distributional differential and its properties. 
\begin{defn}\label{mvmeas}
Let $(\XX,\dist,\mass)$ be an $\RCD(K,\infty)$ space. 	Let $\nu\in\tanXcapinfn$ and let $\mu$ be a finite measure with $\mu\ll\capa$. We write $\nu\mu$ to denote the object that acts on $\tanXcapinfn$ as follows:
\begin{equation}\notag
	\nu\mu(\XX)(v)\defeq \int_\XX v\,\cdot\,\nu\dd{\mu}\quad\text{for every }v\in \tanXcapinfn.
\end{equation} 
Given $\nu_1\mu_1$ and $\nu_2\mu_2$, we write $\nu_1\mu_1=\nu_2\mu_2$ if and only if 
\begin{equation}\label{eqmvmeas}
	\nu_1\mu_2(\XX)(v)=\nu_1\mu_2(\XX)(v)\quad\text{for every }v\in \tanXcapinfn.
\end{equation} 
\end{defn}
\begin{rem}
Given $\mu$ and $\nu$ as above, we can of course define \begin{equation}\label{defmunu}
	\nu\mu(A)(v)\defeq \int_A v\,\cdot\,\nu\dd{\mu}\quad\text{for every }v\in \tanXcapinfn\text{ and every $A\subseteq\XX$ Borel}.
\end{equation} 
More generally, objects taking as entries subsets of $\XX$ and vector fields and that satisfy additional properties (these properties are, for example, satisfied by $\nu\mu$ defined according to \eqref{defmunu}) will be called module valued measures and will be the subject of a forthcoming work of the authors.
\fr
\end{rem}
\begin{rem}
Notice that the expression $\nu\mu$ makes sense even if we only know that the vector field $\nu$ is defined $\mu$-a.e. We will exploit this fact throughout.

Also, we have that \eqref{eqmvmeas} holds if and only if 
\begin{equation}\notag
	\nu_1\mu_1(\XX)(v)=\nu_2\mu_2(\XX)(v)\quad\text{for every }v\in \TestV(\XX)^n.
\end{equation} 
Indeed, we can take $\mu\defeq\mu_1+\mu_2$ and notice that $\nu_i\mu_i=\left( \nu_i\dv{\mu_i}{\mu} \right)\mu$ for $i=1,2$, then the claim follows by density. \fr
\end{rem}

We define now some formal algebraic operations for objects of the kind $\nu\mu$.
\begin{defn}
		Let $(\XX,\dist,\mass)$ be an $\RCD(K,\infty)$ space. 	
\begin{enumerate}[label=\roman*)]\label{formalcalc}
\item Let $\nu\in\tanXcapinfn$ and let $\mu$ be a finite measure with $\mu\ll\capa$. Let moreover $\phi:\XX\rightarrow\RR^{m\times n}$ be a $\mu$-measurable function. We define $\phi( \nu\mu)\defeq(\phi\nu)\mu$, where \begin{equation}\notag
	(\phi\nu)_j\defeq\sum_{i=1}^n \phi_{j,i}\nu_i\quad j=1,\dots,m.
\end{equation}
\item	Let $\nu_1,\nu_2\in\tanXcapinfn$ and let $\mu_1,\mu_2$ be two finite measures with $\mu_1\ll\capa,\mu_2\ll\capa$. We define $\nu_1\mu_1+\nu_2\mu_2$ as follows. First, define $\mu\defeq\mu_1+\mu_2$, then set 
	\begin{equation}\notag
		\nu_1\mu_1+\nu_2\mu_2=\left(\nu_1 \dv{\mu_1}{\mu}+\nu_2 \dv{\mu_2}{\mu}\right)\mu.
	\end{equation}
\end{enumerate}
\end{defn}

\begin{rem}
	We keep the same notation as in the definition above.
\begin{enumerate}[label=\roman*)]
	\item For what concerns item $i)$,	\begin{equation}\notag
		(\phi\nu\mu)(\XX)(v)=\int_\XX \sum_{j=1}^m\sum_{i=1}^n v_j\,\cdot\, \phi_{j,i}\nu_i\dd{\mu}
	\end{equation}
	for every $v\in\tanXcapinfm$.
\item	For what concerns item $ii)$,
	\begin{equation}\notag
		(\nu_1\mu_1+\nu_2\mu_2)(\XX)(v)=\int_\XX v\,\cdot\,\nu_1\dd{\mu_1}+\int_\XX v\,\cdot\,\nu_2\dd{\mu_2}
	\end{equation}
	for every $v\in\tanXcapinfn$.\fr
\end{enumerate}
\end{rem}
\begin{defn}
	Let $(\XX,\dist,\mass)$ be an $\RCD(K,\infty)$ space and $F\in\BVv^n$. We define $\DIFF F\defeq \nu_F\abs{\DIFF F}$, according to Definition \ref{mvmeas}.
\end{defn}
\begin{rem}
	Let $F\in\BVv^n$. Then \eqref{difffloccaompeq2} reads
	\begin{equation}\notag
		\sum_{i=1}^n \int_\XX F_i\dive v_i=-\DIFF F(\XX)(v)\quad\text{for every }v=(v_1,\dots,v_n)\in (\Cqcvfinf\cap D(\dive))^n. 
	\end{equation}
	Also, if $\mu\nu$ is as in Definition \ref{mvmeas} and satisfies  		\begin{equation}\notag
		\sum_{i=1}^n \int_\XX F_i\dive v_i=-\mu\nu (\XX)(v)\quad\text{for every }v=(v_1,\dots,v_n)\in \TestV(\XX)^n,
	\end{equation}
	then $\DIFF F=\nu\mu$.
	
	Finally, it is clear that the map $\BVv^n\ni F\mapsto\DIFF F$ is linear.\fr
\end{rem}

\subsection{Fine properties} Before proving the calculus rules satisfied by the distributional differential of functions of bounded variation, we need some preliminary results regarding the fine properties of functions of bounded variation on $\RCD(K,N)$ spaces. 

\begin{lem}\label{convergenceprec}
	Let $(\XX,\dist,\mass)$ be an $\RCD(K,N)$ space and $f\in\BVv\cap\Lpi$. Then 
	\begin{equation}\notag
		\lim_{s\searrow 0}\heat_s f(x)= \bar{f}(x)\quad \HH^h\text{-a.e.}
	\end{equation}
\end{lem}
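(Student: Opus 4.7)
The plan is to reduce the statement to the case of characteristic functions of finite-perimeter sets via the Fleming--Rishel coarea formula and then invoke Gaussian heat-kernel asymptotics to identify the pointwise limit with the density.

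\textbf{Reduction to indicators.} Splitting $f=f^+-f^-$ I may assume $f\ge 0$. By Fubini,
\[
\heat_s f(x)=\int_\XX p_s(x,y)f(y)\dd{\mass(y)}=\int_0^{\Vert f\Vert_{\Lpi}}\heat_s\chi_{E_r}(x)\dd{r},
\]
where $E_r\defeq\{f>r\}$; by \eqref{coareaeq}, $E_r$ has finite perimeter for $\LL^1$-a.e.\ $r$. Moreover, unpacking the definitions gives $\chi_{E_r}^\wedge(x)=1$ precisely when $x$ is a density-$1$ point of $E_r$, and $\chi_{E_r}^\vee(x)=1$ precisely when $x$ has positive upper density in $E_r$. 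Monotonicity of $r\mapsto E_r$ together with the definitions of $f^\wedge,f^\vee$ then yield, for every $x$,
\[
f^\wedge(x)=\int_0^{\Vert f\Vert_{\Lpi}}\chi_{E_r}^\wedge(x)\dd{r},\qquad f^\vee(x)=\int_0^{\Vert f\Vert_{\Lpi}}\chi_{E_r}^\vee(x)\dd{r},
\]
hence $\bar f(x)=\int_0^{\Vert f\Vert_{\Lpi}}\overline{\chi_{E_r}}(x)\dd{r}$ whenever $f^\wedge(x),f^\vee(x)$ are finite, i.e.\ for $\HH^h$-a.e.\ $x$ by \eqref{finitenessprec}.

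\textbf{Convergence for finite-perimeter indicators.} The key step is to show that whenever $E$ has finite perimeter, $\heat_s\chi_E(x)\to\overline{\chi_E}(x)$ at $\HH^h$-a.e.\ $x$. From the very definition \eqref{defnessboundary} of $\partial^*E$, every $x\in\XX\setminus\partial^*E$ has density $0$ or $1$ of $E$ at $x$, while by \eqref{unmezzoqoeq} for $\HH^h$-a.e.\ $x\in\partial^*E$ the density equals $1/2$; in the three cases $\overline{\chi_E}(x)$ is $0,1,\tfrac12$ respectively. I would then combine the sharp Gaussian bounds \eqref{heatkerneleq} with the local doubling property and an annular decomposition of $E$ at scales $k\sqrt{s}$ around $x$ to show that
\[
\heat_s\chi_E(x)-\frac{\mass(E\cap B_{\sqrt{s}}(x))}{\mass(B_{\sqrt{s}}(x))}\longrightarrow 0\quad\text{as }s\searrow 0,
\]
so that $\heat_s\chi_E(x)$ converges to the density of $E$ at $x$ whenever such density exists, and hence to $\overline{\chi_E}(x)$ at $\HH^h$-a.e.\ $x$.

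\textbf{Fubini and dominated convergence.} Consider
\[
Z\defeq\{(x,r)\in\XX\times[0,\Vert f\Vert_{\Lpi}]:\heat_s\chi_{E_r}(x)\not\to\overline{\chi_{E_r}}(x)\text{ as }s\searrow 0\}.
\]
By the previous step, the $x$-section of $Z$ is $\HH^h$-negligible for $\LL^1$-a.e.\ $r$. Since $\HH^h$ is locally finite on an $\RCD(K,N)$ space (being comparable to the codimension-$1$ Hausdorff measure on the essential-dimension part), Fubini applied on $\XX\times[0,\Vert f\Vert_{\Lpi}]$ transfers this to: the $r$-section of $Z$ is $\LL^1$-negligible for $\HH^h$-a.e.\ $x$. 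As $|\heat_s\chi_{E_r}(x)|\le 1$, for any such $x$ dominated convergence in $r$ yields
\[
\heat_s f(x)=\int_0^{\Vert f\Vert_{\Lpi}}\heat_s\chi_{E_r}(x)\dd{r}\longrightarrow\int_0^{\Vert f\Vert_{\Lpi}}\overline{\chi_{E_r}}(x)\dd{r}=\bar f(x),
\]
which is the claim.

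\textbf{Main obstacle.} The technical heart of the proof is the heat-kernel asymptotics in the second step: one must control the fact that $p_s(x,y)$ involves $\mass(B_{\sqrt{s}}(y))$ rather than $\mass(B_{\sqrt{s}}(x))$, and carefully handle the Gaussian tail via the annular decomposition using doubling. Once this is done the Fubini exchange and the dominated-convergence conclusion are routine, the only care needed being the local $\sigma$-finiteness of $\HH^h$ which is guaranteed by the finite-dimensional $\RCD$ structure.
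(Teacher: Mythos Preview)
Your layer-cake reduction is natural, but two steps do not go through as written.

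\textbf{The density-$\tfrac12$ case.} The heart of the matter is showing $\heat_s\chi_E(x)\to\tfrac12$ at $\HH^h$-a.e.\ $x\in\partial^*E$. The bounds \eqref{heatkerneleq} give only $c\,G_s\le p_s\le C\,G_s$ with \emph{different} constants, and the super-level sets of $p_s(x,\cdot)$ are merely trapped between balls of comparable but unequal radii. An annular decomposition therefore yields at best $c'\theta\le\liminf_s\heat_s\chi_E(x)\le\limsup_s\heat_s\chi_E(x)\le C'\theta$ at a density-$\theta$ point; this pins down the limit when $\theta\in\{0,1\}$ (Lebesgue points) but says nothing when $\theta=\tfrac12$. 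In $\RR^n$ the super-level sets of $p_s(x,\cdot)$ are exactly balls and the superposition argument works; this is precisely what fails on a general $\RCD(K,N)$ space. The paper supplies the missing structural input: at $\abs{\DIFF\chi_E}$-a.e.\ $x\in\partial^*E$ the blow-up is a Euclidean half-space (Theorem~\ref{uniquenesstaper}), and stability of heat kernels under pmGH convergence then gives $\heat_{s^2}\chi_E(x)\to\heat_1^{\RR^n}\chi_{\{x_n>0\}}(0)=\tfrac12$ directly.

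\textbf{The Fubini swap.} Your claim that $\HH^h$ is locally finite is false: on a space of essential dimension $n$ the measure $\HH^h$ is comparable to $\HH^{n-1}$, which is infinite on every open set, so $\HH^h$ is not $\sigma$-finite and Tonelli does not apply on $\XX\times[0,\Vert f\Vert_{\Lpi}]$. One could try to repair this by handling $x\notin S_f$ directly as Lebesgue points of $f$ and restricting the product argument to $S_f$, where $\HH^h\mres S_f$ \emph{is} $\sigma$-finite---but you would still need the blow-up input above. The paper sidesteps the issue entirely: for $x\in S_f$ it fixes a \emph{single} good level $t\in(f^\wedge(x),f^\vee(x))$ from a countable dense set, uses the half-space blow-up of $\{f>t\}$ to get $\heat_{s^2}\chi_{\{f>t\}}(x)\to\tfrac12$, and then splits $f$ into its contributions on $\{f>t\}$ and $\{f\le t\}$; no product-measure argument is needed.
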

\begin{proof}
	In the sequel, let $C$ denote a numerical constant depending only on the parameters entering into play (it may vary during the proof).
	\\\textsc{Step 1}. Case $x\in\XX\setminus S_f$. We can compute 
	\begin{equation}\notag
		\limsup_{s\searrow 0}\abs{\heat_{s^2} f(x)-\bar{f}(x)}\le \limsup_{s\searrow 0}\int_\XX p_{s^2}(x,y)\abs{f(y)-\bar{f}(x)}\dd{\mass(y)}.
	\end{equation}
	Fix now $a>1$. Using the estimates for the heat kernel in \eqref{heatkerneleq}
	\begin{equation}\notag
		\int_{B_{a s}(x)} p_{s^2}(x,y)\abs{f(y)-\bar{f}(x)}\dd{\mass(y)}\le \frac{C}{\mass (B_s(x))} \int_{B_{a s}(x)}\abs{f(y)-\bar{f}(x)}\dd{\mass(y)}
	\end{equation}
	that converges to $0$, because of the doubling inequality and the fact that points in $\XX\setminus S_f$ are Lebesgue points for $f$. Also, as $f\in\Lpi$,
	\begin{equation}\notag
		\int_{\XX\setminus B_{a s}(x)} p_{s^2}(x,y)\abs{f(y)-\bar{f}(x)}\dd{\mass(y)}\le C \int_{\XX\setminus B_{a s}(x)} p_{s^2}(x,y)\dd{\mass(y)}.
	\end{equation}
	Using again the estimates for the heat kernel in \eqref{heatkerneleq},
	\begin{equation}\notag
		\begin{split}
			p_{s^2}(x,y)&\le \frac{ C}{\mass(B_s(x))} \exp{-\frac{\dist(x,y)^2} {6 s^2}}\\&\le C H(x,y) \frac{1}{\mass(B_{2 s}(x))}\exp{-\frac{\dist(x,y)^2} {8 s^2}} \le C H(x,y) p_{4 s^2}(x,y),
		\end{split}
	\end{equation}
	where, using the doubling inequality,
	$$ H(x,y)\defeq\frac{\mass(B_{2 s}(x))}{\mass(B_s(x))} \exp{-\frac{\dist(x,y)^2}{6 s^2}+\frac{\dist(x,y)^2} {8 s^2} } \le C e^{-a^2/24}\quad\text{if\ }\dist(x,y)\ge a s.$$ Therefore
	\begin{equation}\label{tmphhhh}
		\int_{\XX\setminus B_{a s}(x)} p_{s^2}(x,y)\dd{\mass(y)}\le C e^{-a^2/24} \int_\XX p_{4 s^2}(x,y)\dd{\mass(y)}\le C e^{-a^2/24}.
	\end{equation}
	Being $a$ arbitrary, we conclude $\lim_{s\searrow 0}\heat_{s^2} f(x)=\bar{f}(x)$ if $x\notin S_f$.
	\\\textsc{Step 2}. Case $x\in S_f$. Let $D\subseteq\RR$ a countable dense set such that if $t\in D$, then $E_t\defeq\{f>t\}$ is a set of finite perimeter. This is possible thanks to the coarea formula \eqref{coareaeqdiff}. Set, for every $t\in D$, $N_t$ as the set of points of $\partial^* E_t$ where the conclusions of Theorem \ref{uniquenesstaper} or Lemma \ref{compactnessfp} fail.
	We know that $\abs{\DIFF\chi_{E_t}}(N_t)=0$, hence $\HH^h(N_t)=0$, \eqref{reprformula3}. We set $$N\defeq \bigcup_{t \in D}N_t,$$ notice $\HH^h(N)=0$. We show now $\lim_{s\searrow 0}\heat_{s^2} f(x)=\bar{f}(x)$ for every $x\in S_f\setminus N$. Fix $x\in S_f\setminus N$ and $t\in (f^{\wedge}(x),f^{\vee}(x))\cap D$. By the very definition, $x\in\partial^* E_t$, therefore at $x$ the conclusions of Theorem \ref{uniquenesstaper} and Lemma \ref{compactnessfp} hold. As a consequence, using also \cite[(4.13)]{ambrosio2018rigidity}, we infer (if we let $n$ denote the essential dimension of the space)
	$$\lim_{s\searrow 0}\heat_{s^2}(\chi_{E_t})(x)=\heat^{\RR^n}_1 (\chi_{\{x_n>0\}})(0)=1/2,$$ so that also $$\lim_{s\searrow 0}\heat_{s^2}(\chi_{\XX\setminus E_t})(x)=1/2.$$
	We can then write
	\begin{equation}\notag
		\begin{split}
			&\limsup_{s\searrow 0}\abs{\heat_{s^2} f(x)-\bar{f}(x)}\\&\quad \le\limsup_{s\searrow 0} \abs{\heat_{s^2}(\chi_{\XX\setminus E_t} (f -f^{\wedge}(x)))}(x)+ \limsup_{s\searrow 0} \abs{\heat_{s^2}(\chi_{E_t} (f -f^{\vee}(x))) }(x).
		\end{split}
	\end{equation}
	It is enough then to show $\limsup_{s\searrow 0} \abs{\heat_{s^2}(\chi_{E_t} (f -f^{\vee}(x))) }(x)=0$ (the other term can be dealt similarly).
	We fix $a>1$. We can compute
	\begin{equation}\notag
		\begin{split}
			\abs{\heat_{s^2}(\chi_{E_t} (f -f^{\vee}(x))) }(x)&\le \int_{E_t} p_{s^2}(x,y) \abs{f(y)-f^{\vee}(x)}\dd{\mass(y)}\\&= \int_{E_t\cap B_{a s}(x)} p_{s^2}(x,y) \abs{f(y)-f^{\vee}(x)}\dd{\mass(y)}\\&\qquad+ \int_{\XX\setminus B_{a s}(x)} p_{s^2}(x,y) \abs{f(y)-f^{\vee}(x)}\dd{\mass(y)}.
		\end{split}
	\end{equation}
	The second term on the right hand side is bounded by $C e^{-a^2/24}$, thanks to $f\in\Lpi$ and \eqref{tmphhhh}. As $a$ is arbitrary, we conclude if we show that the first term converges to $0$ as $s\searrow 0$.
	Using the estimates for the heat kernel in \eqref{heatkerneleq} and the doubling inequality we estimate the first term by
	\begin{equation}\notag
		\begin{split}
			&\limsup_{s\searrow 0}\int_{E_t\cap B_{a s}(x)} p_{s^2}(x,y) \abs{f(y)-f^{\vee}(x)}\dd{\mass(y)}\\&\qquad\le\limsup_{s\searrow 0} \frac{C} {\mass(B_{a s}(x))} \int_{E_t\cap B_{a s}(x)} \abs{f(y)-f^{\vee}(x)}\dd{\mass(y)}.
		\end{split}
	\end{equation}
	
	Take now any $t_1\in (t,f^{\vee}(x))\cap D$ and $t_2\in (f^{\vee}(x),\infty)\cap D$.
	We can split 
	\begin{equation}\notag
		B_{a s}(x)\cap E_t= \left(B_{a s}(x)\cap (E_t\setminus E_{t_1})\right)\cup\left( B_{a s}(x)\cap (E_{t_1}\setminus E_{t_2})\right)\cup\left( B_{a s}(x)\cap E_{t_2}\right).
	\end{equation}
	Now by the very definition of $f^{\vee}(x)$, $E_{t_2}$ has density $0$ at $x$. Also, $E_t\setminus E_{t_1}$ has density $0$ at $x$, as $E_{t_1}\subseteq E_t$ and both $E_t$ and $E_{t_1}$ have density $1/2$ at $x$, as a consequence of $x\in\partial^* E_{t_1}\cap\partial^* E_t$ and $x\notin N $. Therefore, taking into account $f\in\Lpi$, we are left with 
	\begin{equation}\notag
		\begin{split}
			&\limsup_{s\searrow 0} \frac{C} {\mass(B_{a s}(x))} \int_{E_t\cap B_{a s}(x)} \abs{f(y)-f^{\vee}(x)}\dd{\mass(y)}\\&\quad =\limsup_{s\searrow 0} \frac{C} {\mass(B_{a s}(x))} \int_{(E_{t_1}\setminus E_{t_2})\cap B_{a s}(x)} \abs{f(y)-f^{\vee}(x)}\dd{\mass(y)} \le C (t_2-t_1).
		\end{split}
	\end{equation}
	We conclude as we can take $t_2,t_1\rightarrow t$ by density of $D$ in $\RR$.
\end{proof}

\begin{lem}\label{normcut}
Let $(\XX,\dist,\mass)$ be an $\RCD(K,N)$ space and $f,f_1,f_2\in\BVv$ such that $f=f_1+f_2$ and $\abs{\DIFF f}=\abs{\DIFF f_1}+\abs{\DIFF f_2}$. Then 
\begin{alignat}{2}\notag
{\nu}_f&={\nu}_{f_i}&&\abs{\DIFF f_i}\text{-a.e.\ for $i=1,2,$}\\\notag
{\nu}_{f_1}&={\nu}_{f_2}\quad&&\abs{\DIFF f_1}\wedge\abs{\DIFF f_2}\text{-a.e.} 
\end{alignat}
\end{lem}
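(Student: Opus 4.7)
\textbf{Proof plan for Lemma \ref{normcut}.}

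The plan is to exploit the uniqueness statement in Theorem \ref{weakder2} together with a pointwise computation in the Hilbert module $\tanbvXn{\mu}$, where $\mu \defeq \abs{\DIFF f}$. First, set $g_i \defeq \dv{\abs{\DIFF f_i}}{\mu} \in \Lp^\infty(\mu)$; by the assumption $\abs{\DIFF f} = \abs{\DIFF f_1} + \abs{\DIFF f_2}$, we have $g_1, g_2 \ge 0$ and $g_1 + g_2 = 1$ $\mu$-a.e. Viewing $\nu_{f_1}, \nu_{f_2}$ as elements of $\tanbvX{\mu}$ through the projection $\pibar_\mu$ from $\tanXcap$ (this is legitimate because $\abs{\DIFF f_i}\ll\mu\ll\capa$), linearity of the map $F\mapsto \DIFF F$ and the integration by parts formula \eqref{difffloccaompeq2} give
\begin{equation*}
\nu_f \, \mu = \DIFF f = \DIFF f_1 + \DIFF f_2 = (g_1 \nu_{f_1} + g_2 \nu_{f_2})\,\mu,
\end{equation*}
so that the uniqueness part of Theorem \ref{weakder2} yields $\nu_f = g_1 \nu_{f_1} + g_2 \nu_{f_2}$ $\mu$-a.e.

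Next I would compute the pointwise norm in the Hilbert module structure. Since $\abs{\nu_f} = 1$ $\mu$-a.e.\ and $\abs{\nu_{f_i}} = 1$ on $\{g_i > 0\}$ (the latter holding because $\abs{\nu_{f_i}} = 1$ $\abs{\DIFF f_i}$-a.e.\ and $\abs{\DIFF f_i} = g_i \mu$), on the set $A \defeq \{g_1 > 0\} \cap \{g_2 > 0\}$ one has
\begin{equation*}
1 = g_1^2 + g_2^2 + 2 g_1 g_2\, \nu_{f_1}\,\cdot\,\nu_{f_2} = 1 - 2 g_1 g_2 + 2 g_1 g_2\, \nu_{f_1}\,\cdot\,\nu_{f_2},
\end{equation*}
using $(g_1+g_2)^2 = 1$. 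Hence $\nu_{f_1}\,\cdot\,\nu_{f_2} = 1$ on $A$, and combined with $\abs{\nu_{f_1}} = \abs{\nu_{f_2}} = 1$ on $A$, the equality case of Cauchy--Schwarz gives $\nu_{f_1} = \nu_{f_2}$ $\mu$-a.e.\ on $A$. Since $\abs{\DIFF f_1}\wedge\abs{\DIFF f_2} = (g_1\wedge g_2)\mu$ is concentrated on $A$, this yields the second identity $\nu_{f_1} = \nu_{f_2}$ $\abs{\DIFF f_1}\wedge\abs{\DIFF f_2}$-a.e.

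For the first identity, I would split $\{g_i > 0\}$ into the sets $\{g_i = 1, g_{3-i} = 0\}$ and $A$: on the former $\nu_f = \nu_{f_i}$ is direct from $\nu_f = g_1\nu_{f_1} + g_2\nu_{f_2}$, while on the latter $\nu_{f_1} = \nu_{f_2}$ gives $\nu_f = (g_1+g_2)\nu_{f_i} = \nu_{f_i}$. Since $\abs{\DIFF f_i} = g_i \mu$ lives on $\{g_i > 0\}$, this establishes $\nu_f = \nu_{f_i}$ $\abs{\DIFF f_i}$-a.e. The only delicate point is to make sure the algebraic manipulations with $\nu_{f_i}$ are performed inside a single module where their sum and pointwise scalar product are meaningful; this is where the capacitary lift via Theorem \ref{finemodule} and the formal calculus in Definition \ref{formalcalc} enter and should be invoked carefully, but no genuine analytical difficulty is involved beyond what Theorem \ref{weakder2} already provides.
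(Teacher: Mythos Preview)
Your argument is correct and takes a genuinely different route from the paper's. The paper does not work pointwise in a common module; instead it picks a sequence $\{v_k\}\subseteq\TestV(\XX)$ with $v_k\to\nu_f$ in $\tanbvX{f}$ (via Lemma \ref{densitytestbar}) and expands
\[
\Vert v_k-\nu_{f_1}\Vert^2_{\tanbvX{f_1}}+\Vert v_k-\nu_{f_2}\Vert^2_{\tanbvX{f_2}}
=\abs{\DIFF f}(\XX)+\Vert v_k\Vert^2_{\tanbvX{f}}-2\int_\XX v_k\,\cdot\,\nu_f\dd{\abs{\DIFF f}}\to 0,
\]
using linearity of $\DIFF$ and the additivity hypothesis. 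This forces $v_k\to\nu_{f_i}$ in $\tanbvX{f_i}$, and comparing limits gives both conclusions. The advantage of the paper's route is that the $v_k$'s, being test vector fields, sit naturally in every module $\tanbvX{f}$, $\tanbvX{f_1}$, $\tanbvX{f_2}$ simultaneously, so no lift of $\nu_{f_i}$ into a common ambient module is needed. Your approach is more direct and transparent once that lift is in place: the identity $\nu_f=g_1\nu_{f_1}+g_2\nu_{f_2}$ combined with $g_1+g_2=1$ and the pointwise Hilbert structure reduces everything to the equality case of Cauchy--Schwarz. The ``delicate point'' you flag is genuine but handled exactly as in Step 3 and Step 4 of the proof of Theorem \ref{weakder2}, where the paper also writes $g_i\pibar_\mu(\nu_{f_i})$ with $\nu_{f_i}$ regarded as an element of $\tanXcap$; so your proof fits seamlessly into the paper's framework.
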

\begin{proof}
	By Lemma \ref{densitytestbar}, we have a sequence $\{v_k\}_k\subseteq \TestV(\XX)$ such that $v_k\rightarrow\nu_{f}$ in $\tanbvX{f}$. In particular, 
\begin{equation}\notag
	\begin{split}
		\abs{\DIFF f}(\XX)=\lim_k \int_\XX v_k\,\cdot\,\nu_{f} \dd{\abs{\DIFF f}}.
	\end{split}
\end{equation} 
Thanks to the hypothesis $\abs{\DIFF f}=\abs{\DIFF f_1}+\abs{\DIFF f_2}$,
\begin{equation}\notag
	\begin{split}
		&\Vert v_k-\nu_{f_1}\Vert^2_{\tanbvX{f_1}}+\Vert v_k-\nu_{f_2}\Vert^2_{\tanbvX{f_2}}\\&\quad=\Vert \nu_{f_1}\Vert^2_{\tanbvX{f_1}}+\Vert \nu_{f_2}\Vert^2_{\tanbvX{f_2}} +\Vert v_k\Vert^2_{\tanbvX{f}} -2\int_\XX v_k\,\cdot\,\nu_{f_{1}} \dd{\abs{\DIFF f_{1}}}-2\int_\XX v_k\,\cdot\,\nu_{f_2} \dd{\abs{\DIFF f_2}}\\&\quad= \abs{\DIFF f}(\XX)+\Vert v_k\Vert^2_{\tanbvX{f}} -2\int_\XX v_k\,\cdot\,\nu_{f} \dd{\abs{\DIFF f}}
	\end{split}
\end{equation}
where in the last equality we used also the linearity of the map $f\mapsto \DIFF f$.
It follows 
\begin{equation}\notag
	\lim_k \left(\Vert v_k-\nu_{f_1}\Vert^2_{\tanbvX{f_1}}+\Vert v_k-\nu_{f_2}\Vert^2_{\tanbvX{f_2}} \right)=0.
\end{equation}
We conclude as we have proved
$v_k\rightarrow \nu_f$ in $\tanbvX{f}$ and $v_k\rightarrow \nu_{f_i}$ in $\tanbvX{f_i}$ for $i=1,2$.
\end{proof}
\begin{lem}\label{oknormal}
	Let $(\XX,\dist,\mass)$ be an $\RCD(K,N)$ space and $E, F$ two subsets of finite perimeter and finite mass of $\XX$.
	Then
	\begin{equation}\label{lemloc}
		{\nu}_{E}=\pm{\nu}_{F}\quad {\HH^h}\text{-a.e.\ in }\partial^*{E}\cap\partial^*{F}.
	\end{equation} 
	More precisely, there exist two sets $N^+,N^-\subseteq\partial^*E\cap\partial^*F$ with $\HH^h((\partial^*E\cap\partial^*F)\setminus(N^+\cup N^-))=0$ such that the following holds: for every $x\in N^+$ (resp.\ $N^-$) the set $E\Delta F$ has density $0$ (resp.\ $1$) at $x$ and \eqref{lemloc} holds $\HH^h$-a.e.\ in $ N^+$ (resp.\ $N^-$) with the $+$ (resp.\ $-$) sign.
\end{lem}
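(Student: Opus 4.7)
The plan is to partition $\partial^*E \cap \partial^*F$ according to the essential density of the symmetric difference $E \Delta F$. I will set
\[
N^+ \defeq \left\{ x \in \partial^*E \cap \partial^*F : \lim_{r \searrow 0} \frac{\mass(B_r(x) \cap (E \Delta F))}{\mass(B_r(x))} = 0 \right\},
\]
and define $N^-$ analogously with limit equal to $1$. There are two claims to verify: first, that $N^+ \cup N^-$ covers $\partial^*E \cap \partial^*F$ up to $\HH^h$-negligible sets, and second, that $\nu_E = \nu_F$ $\HH^h$-a.e.\ on $N^+$ and $\nu_E = -\nu_F$ $\HH^h$-a.e.\ on $N^-$. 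Together these yield \eqref{lemloc}.

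The covering claim is the main obstacle and I plan to handle it via blow-up and the rigidity of the tangent structure. Suppose by contradiction that $x \in (\partial^*E \cap \partial^*F) \setminus (N^+ \cup N^-)$; then by \eqref{defnessboundary} we automatically get $x \in \partial^*(E \Delta F)$. Applying \eqref{reprformula3} to each of $E$, $F$ and $E \Delta F$, at $\HH^h$-a.e.\ such $x$ I may further assume $x \in \mathcal{F}_n E \cap \mathcal{F}_n F \cap \mathcal{F}_n(E \Delta F)$, so that the tangent rigidity of Theorem \ref{uniquenesstaper} applies simultaneously to the three sets. Fixing any $r_j \searrow 0$, Lemma \ref{compactnessfp} will let me extract a common subsequence along which $(E, F, E\Delta F) \to (H_E, H_F, H_E \Delta H_F)$ in $L^1_{\mathrm{loc}}$ in a single Euclidean blow-up, with $H_E, H_F \subseteq \RR^n$ half-spaces. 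The uniqueness of the tangent to $E\Delta F$ then forces $H_E \Delta H_F$ to be isomorphic to the half-space $\{x_n > 0\}$, in particular to have density $1/2$ at the origin; but $H_E \Delta H_F$ is $\emptyset$, $\RR^n$, or a non-half-space ``bowtie'' according to whether $H_E = H_F$, $H_E = H_F^c$, or the two normals are non-parallel, and none of these is a half-space of density $1/2$. The contradiction gives the claim.

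For the normal identification on $N^+$, I plan to use the scalar function $g \defeq \chi_E - \chi_F \in \BVv$. Its superlevel sets $\{g > r\}$ equal $E \setminus F$ for $r \in [0,1)$ and $\XX \setminus (F \setminus E)$ for $r \in [-1, 0)$, so the coarea formula \eqref{coareaeqdiff} gives $\abs{\DIFF g} = \per(E \setminus F, \,\cdot\,) + \per(F \setminus E, \,\cdot\,)$. Since on $N^+$ both $E \setminus F$ and $F \setminus E$ have density $0$, \eqref{defnessboundary} and \eqref{reprformula3} yield $\abs{\DIFF g}(N^+) = 0$. The linearity of $F \mapsto \DIFF F$ noted just after Definition \ref{mvmeas} gives $\DIFF g = \DIFF \chi_E - \DIFF \chi_F$ as module-valued measures; restricting to $N^+$ and using that both $\abs{\DIFF \chi_E}\mres N^+$ and $\abs{\DIFF \chi_F}\mres N^+$ coincide with $\tfrac{\omega_{n-1}}{\omega_n}\HH^h \mres N^+$ by \eqref{reprformula3}, a density argument via Lemma \ref{densitytestbargen} applied with $\mu = \HH^h \mres N^+$ will force $\nu_E = \nu_F$ $\HH^h$-a.e.\ on $N^+$. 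The case of $N^-$ is symmetric: replacing $g$ by $h \defeq \chi_E + \chi_F$, coarea yields $\abs{\DIFF h} = \per(E \cap F, \,\cdot\,) + \per(E \cup F, \,\cdot\,)$, and an elementary density computation combining \eqref{unmezzoqoeq} with the density-$1$ property of $E \Delta F$ on $N^-$ shows $E \cap F$ has density $0$ and $E \cup F$ density $1$ at $\HH^h$-a.e.\ point of $N^-$, whence $\abs{\DIFF h}(N^-) = 0$ and $\nu_E = -\nu_F$ on $N^-$ follows by the same scheme.
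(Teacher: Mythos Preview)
Your proof is correct, but it diverges from the paper's in both of its two halves.

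For the covering claim, the paper avoids blow-ups entirely: it invokes \eqref{unmezzoqoeq} for the sets $E$, $F$, $E\setminus F$, $F\setminus E$, $E\Delta F$ so that, at $\HH^h$-a.e.\ point of $\partial^*E\cap\partial^*F$, all five densities exist and lie in $\{0,\tfrac12,1\}$, with those of $E$ and $F$ equal to $\tfrac12$. Elementary arithmetic (density$(E\setminus F)=$ density$(F\setminus E)$, both bounded by $\tfrac12$) then forces density$(E\Delta F)\in\{0,1\}$. Your route through Theorem~\ref{uniquenesstaper} and simultaneous $L^1_{\mathrm{loc}}$ convergence of $E,F,E\Delta F$ to half-spaces is heavier machinery for the same conclusion, though it works fine.

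For the normal identification, the paper first treats the two special configurations $E\subseteq F$ and $E\cap F=\emptyset$ via Lemma~\ref{normcut} (applied to $f=\chi_E+\chi_F$, resp.\ $f=\chi_E-\chi_F$, where coarea gives $|\DIFF f|=|\DIFF\chi_E|+|\DIFF\chi_F|$), and then transfers to the general case through the auxiliary sets $E\cap F$, $E\setminus F$, $F\setminus E$. Your argument bypasses Lemma~\ref{normcut} and the auxiliary sets: you show directly that $|\DIFF(\chi_E-\chi_F)|(N^+)=0$ and $|\DIFF(\chi_E+\chi_F)|(N^-)=0$, then use linearity of $\DIFF$ together with $|\DIFF\chi_E|\mres N^\pm=|\DIFF\chi_F|\mres N^\pm$ and density of test vector fields. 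This is a cleaner localization of the same underlying idea and saves the detour through nested/disjoint cases.
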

Notice that \eqref{reprformula3} implies that \eqref{lemloc} is well defined.
\begin{proof}
	First assume $E\subseteq F$.
	In this case \eqref{coareaeqdiff} shows that we can use Lemma \ref{normcut} with $f_1=\chi_{E}$ and $f_2=\chi_{F}$, so that, recalling also \eqref{reprformula3}, $${\nu}_{E}={\nu}_{F}\quad {\HH^h}\text{-a.e.\ in }\partial^*{E}\cap\partial^*{F}.$$ 
	Assume now $E\cap F=\emptyset$. Using the same arguments as above (with $f_1=\chi_{E}$ and $f_2=-\chi_{F}$), $${\nu}_{E}=-{\nu}_{F}\quad {\HH^h}\text{-a.e.\ in }\partial^*{E}\cap\partial^*{F},$$
	where we also used $\nu_{-\chi_F}=-\nu_F$.
	
	Thanks to \eqref{unmezzoqoeq}, we may consider only the set of points at which the sets $E$ and $F$ have density $1/2$ and the sets $E\setminus F$, $F\setminus E$ and $E\Delta F$ have density in $\{0,1/2,1\}$. We easily show that $E\Delta F$ can not have density $1/2$ at such points.
	If $E\Delta F$ has density $0$ at $x$, then $E\cap F$ has density $1/2$ at $x$. We can use the first case treated above to compare first ${\nu}_{E}$ with ${\nu}_{E\cap F}$ and then ${\nu}_{F}$ with with ${\nu}_{E\cap F}$.
	If instead $E\Delta F$ has density $1$ at $x$, both $E\setminus F$ and $F\setminus E$ have density $1/2$ at $x$. We can use the first case treated above to compare first ${\nu}_{E}$ with ${\nu}_{E\setminus F}$, then ${\nu}_{F}$ with ${\nu}_{F\setminus E}$ and conclude comparing ${\nu}_{E\setminus F}$ with ${\nu}_{F\setminus E}$, using the second case treated above.
\end{proof}
\begin{rem}
It is worth noticing that one can easily derive from Lemma \ref{oknormal} the calculus rules for the distributional derivatives of intersection, union and difference of sets of finite perimeter and finite mass, cf.\ \cite[Theorem 4.11]{bru2021constancy}.\fr
\end{rem}

\begin{lem}\label{normsublevel}
	Let $(\XX,\dist,\mass)$ be an $\RCD(K,N)$ space and $f\in\BVv$. Define
	\begin{equation}\notag
		\nu_f^t\defeq
		\begin{cases}
			\pibar_{{ \chi_{\{f>t\}}}}({\nu}_f)\in\tanbvX{\{f>t\}}&\text{ for $\LL^1$-a.e.\ $t>0$},\\
			\pibar_{{ \chi_{\{f<t\}}}}({\nu}_f)\in\tanbvX{\{f<t\}}&\text{ for $\LL^1$-a.e.\ $t<0$},
		\end{cases}
	\end{equation}
where we considered $\nu_f$ as an element of $\tanXcap$.
Then $\nu_{\{f>t\}}=\nu_f^t$ for $\LL^1$-a.e.\ $t>0$ and $\nu_{\{f<t\}}=-\nu_f^t$ for $\LL^1$-a.e.\ $t<0$.
\end{lem}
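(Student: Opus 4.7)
The approach is to establish a coarea-type integration-by-parts identity relating $\nu_f$ to the normals $\nu_{\{f>r\}}$ of the superlevel sets and then extract the pointwise claim via Lebesgue differentiation in $r$. I reduce immediately to the case $t>0$: for $t<0$, one applies the case $t>0$ to $-f$ and uses $\nu_{-f} = -\nu_f$ together with $\{-f>s\} = \{f<-s\}$.

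As a preliminary, by \eqref{coareaeqdiff} and a routine Fubini argument (exploiting that the Borel $\sigma$-algebra of $\XX$ is countably generated), for $\LL^1$-a.e.\ $t$ one has $P(\{f>t\},\,\cdot\,) \ll \abs{\DIFF f}$. Combined with \eqref{diffllcapeq}, this ensures that $\nu_f^t = \pibar_{\chi_{\{f>t\}}}(\nu_f)$ is an unambiguous element of $\tanbvX{\{f>t\}}$ for such $t$, independently of the chosen lift of $\nu_f$ to $\tanXcap$.

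The main step is to show that for every $a \ge 0$ and every $v \in \TestV(\XX)$ with bounded support,
\begin{equation}\label{eq:plan-coarea}
\int_a^{\infty} \int_\XX v \cdot \bigl(\nu_f - \nu_{\{f > r\}}\bigr)\, dP(\{f>r\})\, dr \;=\; 0.
\end{equation}
To prove \eqref{eq:plan-coarea}, I split $f = (f \wedge a) + (f-a)^+$. An easy application of \eqref{coareaeqdiff} gives $\abs{\DIFF f} = \abs{\DIFF(f \wedge a)} + \abs{\DIFF(f-a)^+}$, together with the disintegration $\abs{\DIFF(f-a)^+}(A) = \int_a^\infty P(\{f>r\}, A)\, dr$ for Borel $A$. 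By Lemma \ref{normcut} one then has $\nu_f = \nu_{(f-a)^+}$ $\abs{\DIFF(f-a)^+}$-a.e., and we fix a common lift to $\tanXcap$ so that this equality holds $\capa$-a.e.\ on a Borel set of full $\abs{\DIFF(f-a)^+}$-measure. I now compute $-\int_\XX (f-a)^+ \dive v \dd{\mass}$ in two ways. First, by the defining integration-by-parts formula for $\nu_{(f-a)^+}$ together with the disintegration of $\abs{\DIFF(f-a)^+}$, this quantity equals $\int_a^\infty \int v \cdot \nu_f\, dP(\{f>r\})\, dr$. Second, the Cavalieri representation $(f-a)^+(x) = \int_0^\infty \chi_{\{f > a+s\}}(x)\, ds$ together with Fubini (which is justified since $v$ has bounded support and $f \in \Lpu$, via Chebyshev) and the Gauss--Green formula defining $\nu_{\{f>r\}}$ (valid for the $\LL^1$-a.e.\ $r$ for which $\{f>r\}$ has finite perimeter) give $\int_a^\infty \int v \cdot \nu_{\{f>r\}}\, dP(\{f>r\})\, dr$. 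Subtracting the two expressions yields \eqref{eq:plan-coarea}.

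To conclude, observe that $\TestV(\XX)$ is separable, since $\tanX$ is separable (as $\HSs$ is). Choosing a countable dense subset $\{v_k\}_k$ and applying Lebesgue differentiation in $a$ to \eqref{eq:plan-coarea} for each $v_k$, we find a null set $N_k \subseteq (0,\infty)$ outside which $\int v_k \cdot (\nu_f - \nu_{\{f>r\}})\, dP(\{f>r\}) = 0$. On the complement of $\bigcup_k N_k$ this holds simultaneously for all $v_k$, and by density of $\trbar_{\chi_{\{f>r\}}}(\TestV(\XX))$ in $\tanbvX{\chi_{\{f>r\}}}$ (Lemma \ref{densitytestbar}) we extend the identity to arbitrary test elements, obtaining $\nu_f^r = \nu_{\{f>r\}}$ in $\tanbvX{\chi_{\{f>r\}}}$ for $\LL^1$-a.e.\ $r > 0$. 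The main obstacle is the coherent bookkeeping of the lifts of $\nu_f$ and $\nu_{(f-a)^+}$ to $\tanXcap$, so that Lemma \ref{normcut} can be used to substitute the former for the latter inside the coarea disintegration; the Cavalieri manipulation and Fubini applications are then routine.
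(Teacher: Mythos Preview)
Your overall strategy is sound and genuinely different from the paper's. The paper never uses Lemma~\ref{normcut} or Lebesgue differentiation; instead it derives the single Cavalieri identity \eqref{cavvett} (your case $a=0$), picks \emph{one} sequence $v_k\to\nu_f$ in $\tanbvX{f}$, and then expands
\[
\int_0^\infty\!\!\int_\XX |\nu_{\{f>t\}}-\nu_f^t|^2\,\dd{\abs{\DIFF\chi_{\{f>t\}}}}\dd t
+\int_{-\infty}^0\!\!\int_\XX |\nu_{\{f<t\}}+\nu_f^t|^2\,\dd{\abs{\DIFF\chi_{\{f<t\}}}}\dd t
=2\abs{\DIFF f}(\XX)-2\lim_k\int_\XX v_k\cdot\nu_f\,\dd{\abs{\DIFF f}}=0,
\]
using coarea to control the cross term. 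This $L^2$ energy argument delivers the $\LL^1$-a.e.\ conclusion in one stroke, with no separability bookkeeping. Your route---truncating at level $a$, invoking Lemma~\ref{normcut} to identify $\nu_{(f-a)^+}$ with $\nu_f$, and then differentiating in $a$---is a nice alternative and makes the role of the coarea formula more transparent, but it trades the energy computation for a measurability/density argument at the end.

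That final step has a genuine gap. You choose $\{v_k\}$ dense in $\TestV(\XX)$ for the $\tanX$ topology, obtain $\int_\XX v_k\cdot(\nu_f^r-\nu_{\{f>r\}})\,\dd{\per(\{f>r\})}=0$ for all $k$ and $\LL^1$-a.e.\ $r$, and then appeal to Lemma~\ref{densitytestbar} to conclude. The problem is that $v_k\to v$ in $\tanX$ gives no control on $\trbar_{\chi_{\{f>r\}}}(v_k)$ in $\tanbvX{\{f>r\}}$: the trace factors through $\qcrbar$ and $\pibar$, neither of which is continuous for the $\tanX$ norm, so you cannot pass from your countable family to all of $\TestV(\XX)$ in the pairing against $\per(\{f>r\})$. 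Lemma~\ref{densitytestbar} gives density of the \emph{full} $\TestV_{\{f>r\}}(\XX)$, not of your countable subfamily.

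The cleanest repair is to abandon the general density step and instead choose the specific sequence $v_k\to\nu_f$ in $\tanbvX{f}$ (Lemma~\ref{densitytestbar}). By coarea, $\int_0^\infty\int_\XX|v_k-\nu_f^r|^2\,\dd{\per(\{f>r\})}\dd r\le\int_\XX|v_k-\nu_f|^2\,\dd{\abs{\DIFF f}}\to 0$, so along a subsequence $\trbar_{\chi_{\{f>r\}}}(v_k)\to\nu_f^r$ in $\tanbvX{\{f>r\}}$ for $\LL^1$-a.e.\ $r$. Combined with your identity this yields $\int_\XX\nu_f^r\cdot\nu_{\{f>r\}}\,\dd{\per(\{f>r\})}=\per(\{f>r\},\XX)$, and since both vectors have unit norm the conclusion follows. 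But note that at this point you are essentially executing the paper's $L^2$ argument, just localized in $r$; the detour through Lemma~\ref{normcut} and the parameter $a$ becomes unnecessary.
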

\begin{proof}
Notice first that \eqref{coareaeqdiff} implies that $\nu_f^t$ is well defined for $\LL^1$-a.e.\ $t\in\RR$ and that $|{\nu_f^t}|=1$ $\abs{\DIFF\chi_{\{f>t\}}}$-a.e.\ for $\LL^1$-a.e.\ $t>0$ and $|{\nu_f^t}|=1$ $\abs{\DIFF\chi_{\{f<t\}}}$-a.e.\ for $\LL^1$-a.e.\ $t<0$.
	By Cavalieri's integration formula and \eqref{difffloccaompeq2},
\begin{equation}\label{cavvett}
	\int_\XX v\,\cdot\,\nu_f\dd{\abs{\DIFF f}}=\int_0^\infty \int_\XX v\,\cdot\,\nu_{\{f >t\}}\dd{\abs{\DIFF \chi_{\{f>t\}}}}\dd{t}-\int_{-\infty }^0\int_\XX v\,\cdot\,\nu_{\{f <t\}}\dd{\abs{\DIFF \chi_{\{f<t\}}}}\dd{t}
\end{equation}
for every $v\in\TestV(\XX)$.

By Lemma \ref{densitytestbar}, we have a sequence $\{v_k\}_k\subseteq \TestV(\XX)$ such that $v_k\rightarrow\nu_{f}$ in $\tanbvX{f}$, so that, using \eqref{coareaeq},
\begin{equation}\notag
	\begin{split}
		&\bigg|\int_0^\infty \int_\XX \nu_{\{f>t\}}\,\cdot\,\nu_f^t\dd{\abs{\DIFF\chi_{\{f>t\}}}}\dd{t}-\int_{-\infty}^0 \int_\XX \nu_{\{f<t\}}\,\cdot\,\nu_f^t\dd{\abs{\DIFF\chi_{\{f<t\}}}}\dd{t}\\&\quad-\int_0^\infty \int_\XX \nu_{\{f>t\}}\,\cdot\,v_k\dd{\abs{\DIFF\chi_{\{f>t\}}}}\dd{t}+\int_{-\infty}^0 \int_\XX \nu_{\{f<t\}}\,\cdot\,v_k\dd{\abs{\DIFF\chi_{\{f<t\}}}}\dd{t}\bigg| \\&\le\int_{-\infty}^\infty \int_\XX |{\nu_f^t -v_k}|\dd{\abs{\DIFF\chi_{\{f>t\}}}}\dd{t}=\int_\XX \abs{\nu_f-v_k}\dd{\abs{\DIFF f}}\rightarrow 0.
	\end{split}
\end{equation}
We can compute, using \eqref{coareaeqdiff} and \eqref{cavvett},
\begin{equation}\notag
	\begin{split}
		&\int_{0}^\infty \int_\XX |{\nu_{\{f>t\}}-\nu_f^t}|^2\dd{\abs{\DIFF\chi_{\{f>t\}}}}\dd{t}+\int_{-\infty}^0 \int_\XX |{\nu_{\{f<t\}}+\nu_f^t}|^2\dd{\abs{\DIFF\chi_{\{f<t\}}}}\dd{t}\\&\quad=2\abs{\DIFF f}(\XX)-2\int_0^\infty \int_\XX \nu_{\{f>t\}}\,\cdot\,\nu_f^t\dd{\abs{\DIFF\chi_{\{f>t\}}}}+2\int_{-\infty}^0 \int_\XX \nu_{\{f<t\}}\,\cdot\,\nu_f^t\dd{\abs{\DIFF\chi_{\{f<t\}}}}\\&\quad=2\abs{\DIFF f}(\XX)-2\lim_k\bigg(\int_0^\infty \int_\XX \nu_{\{f>t\}}\,\cdot\,v_k\dd{\abs{\DIFF\chi_{\{f>t\}}}} -\int_{-\infty}^0 \int_\XX \nu_{\{f<t\}}\,\cdot\,v_k\dd{\abs{\DIFF\chi_{\{f<t\}}}}\bigg) \\&\quad=2\abs{\DIFF f}(\XX)-2\lim_k\int_\XX \nu_f\,\cdot\,v_k\dd{\abs{\DIFF f}}=0.
	\end{split}
\end{equation}
which yields the conclusion.
\end{proof}

\bigskip

\begin{defn}
	Let $(\XX,\dist,\mass)$ be a metric measure space and $F=(F_1,\dots,F_n)\in\BVv^n$. We define the Borel set
	\begin{equation}\notag
		S_F\defeq\bigcup_{i=1}^n S_{F_i}.
	\end{equation}
\end{defn}

The following result is taken from \cite[Theorem 5.3]{ambmirpal04} and is needed in the proof of Proposition \ref{normalvectval}. We remark that in Proposition \ref{veryweakchain} we will prove that under the $\RCD(K,N)$  assumption, a more precise version holds.
\begin{thm}
Let $(\XX,\dist,\mass)$ be a $\PI$ space and $f\in\BVv$. Then
$$\abs{\DIFF f}=\abs{\DIFF f}\mres (\XX\setminus S_f)+\theta_f\HH^h\mres S_f$$
for some Borel map $\theta_f:S_f\rightarrow \RR$ that is $\HH^h$-a.e.\ strictly positive.
\end{thm}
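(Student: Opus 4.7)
The plan is to first establish the existence of the density $\theta_f$ by absolute continuity, and then prove its positivity on $S_f$ via the coarea formula.

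For the existence, recall from \eqref{diffllxxxeq} that on a PI space one has $\abs{\DIFF f}\ll\HH^h$. Hence the Radon-Nikodym theorem applies to the restriction of $\abs{\DIFF f}$ to $S_f$ (after intersecting with a countable family of bounded Borel sets to ensure $\sigma$-finiteness of $\HH^h\mres S_f$, using that $S_f$ is contained in a countable union of reduced boundaries of sets of finite perimeter of finite mass and \eqref{finitenessprec}) and produces a Borel function $\theta_f:S_f\rightarrow[0,\infty)$ with $\abs{\DIFF f}\mres S_f=\theta_f\HH^h\mres S_f$. Setting $\theta_f=0$ on $\XX\setminus S_f$ and noting that on the complement we can simply restrict $\abs{\DIFF f}$ to $\XX\setminus S_f$ gives the decomposition as measures.

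For the positivity, decompose $S_f$ into the countable family of Borel sets
\[
S_{p,q}\defeq\{x\in S_f: f^\wedge(x)\le p<q\le f^\vee(x)\},\qquad p,q\in\mathbb{Q},\ p<q,
\]
whose union covers $S_f$ up to the $\HH^h$-negligible set $\{f^\vee=+\infty\}\cup\{f^\wedge=-\infty\}$ by \eqref{finitenessprec}. It suffices to prove $\theta_f>0$ $\HH^h$-a.e.\ on each $S_{p,q}$. For this, fix such a pair $(p,q)$ and a Borel set $B\subseteq S_{p,q}$. The coarea formula \eqref{coareaeq} yields
\[
\abs{\DIFF f}(B)=\int_\RR \per(\{f>t\},B)\dd{t}\ge \int_p^q \per(\{f>t\},B)\dd{t},
\]
and using the representation formula \eqref{reprformula2} for each super-level set $E_t=\{f>t\}$ gives
\[
\abs{\DIFF f}(B)\ge \int_p^q \int_B \theta_{E_t}\dd{\HH^h\mres\partial^* E_t}\dd{t}.
\]
The key observation is that if $x\in S_{p,q}$ and $t\in(p,q)$ then the defining conditions $f^\wedge(x)\le p<t<q\le f^\vee(x)$ imply by \eqref{partialstar} that $x\in\partial^* E_t$; hence the integrand $\theta_{E_t}(x)\chi_{\partial^* E_t}(x)$ is bounded below by the universal PI constant $\alpha>0$ appearing in \eqref{reprformula2} for all $x\in B$ and $t\in(p,q)$.

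Combining the two displays and applying Fubini yields $\abs{\DIFF f}(B)\ge \alpha(q-p)\HH^h(B)$ for every Borel $B\subseteq S_{p,q}$, so $\theta_f\ge\alpha(q-p)>0$ $\HH^h$-a.e.\ on $S_{p,q}$. The main obstacle is to ensure that the constant $\alpha$ in the representation formula \eqref{reprformula2} can be taken uniform in $t$ on each such piece, but this is precisely the content of the PI-space estimates used in \cite{ambmirpal04} (where the bound depends only on the doubling and Poincar\'e constants, not on the specific set of finite perimeter).
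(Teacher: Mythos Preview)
The paper does not supply its own proof of this statement; it is quoted directly from \cite[Theorem 5.3]{ambmirpal04}. Your sketch is correct and is essentially the argument found there: absolute continuity $\abs{\DIFF f}\ll\HH^h$ gives the density, and positivity follows from the coarea formula combined with the uniform lower bound $\theta_{E_t}\ge\alpha$ in \eqref{reprformula2} applied on the slices $S_{p,q}$.

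Two small remarks. First, the implication ``$f^\wedge(x)<t<f^\vee(x)\Rightarrow x\in\partial^*\{f>t\}$'' follows from the definitions of $f^\wedge,f^\vee$ together with \eqref{defnessboundary}, not from \eqref{partialstar} (which concerns $\chi_E^\wedge,\chi_E^\vee$ rather than $f^\wedge,f^\vee$); this is exactly the first line of \eqref{relations} later in the paper. Second, the $\sigma$-finiteness of $\HH^h\mres S_f$ is actually a \emph{consequence} of your positivity estimate, since it yields $\HH^h(S_{p,q})\le(\alpha(q-p))^{-1}\abs{\DIFF f}(\XX)<\infty$; so you could run the coarea step first and then invoke Radon--Nikodym, avoiding the separate appeal to the decomposition of $S_f$ into reduced boundaries.
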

As a consequence of the decomposition above, recalling also  \eqref{diffllxxxeq}, if $F\in\BVv^n$ it holds $$\abs{\DIFF F}\mres S_{F_i}\ll\HH^h\mres S_{F_i}\ll\abs{\DIFF F_i}\mres S_{F_i},$$
which implies that the statement in \eqref{normalbig} below is well posed.
\begin{prop}\label{normalvectval}
	Let $(\XX,\dist,\mass)$ be an $\RCD(K,N)$ space and let $F\in\BVv^n$. Then there exists a pair of $\abs{\DIFF F}$-measurable functions $F^l,\ F^r: \XX\rightarrow\RR^n$ and ${\nu}^S_F\in\tanbvX{F}$ such that
	\begin{enumerate}[label=\roman*)]
		\item for $\abs{\DIFF F}$-a.e.\ $x\in S_f$, $|{{\nu}^S_F}|(x)=1$ and $F^r(x)\ne F^l(x)$,
		\item for $\abs{\DIFF F}$-a.e.\ $x\in S_f$, for every $i=1,\dots,n$,
		\begin{equation}\label{normalbig}
			(F^r_i(x)-F^l_i(x)){\nu}^S_F(x)=(F_i^\vee(x)-F_i^\wedge(x)) \nu_{F_i}(x),
		\end{equation}
		where we considered ${\nu}^S_F$ and ${\nu}_{F_i}$ as elements of $\tanXcap$,
		\item for $\abs{\DIFF F}$-a.e.\ $x\in S_F$ there exists a set $E\subseteq\XX$ of finite perimeter such that $x\in\partial^* E$ and 
		\begin{equation}\label{asintoticaint}
			\lim_{r\searrow 0}\dashint_{B_r(x)\cap E} \abs{F- F^r(x)}\dd{\mass}=\lim_{r\searrow 0}\dashint_{B_r(x)\cap(\XX\setminus E)} |F- F^l(x)|\dd{\mass}=0.
		\end{equation}
	\item for $\abs{\DIFF F}$-a.e.\ $x\in\XX\setminus S_F$, $F^l(x)=F^r(x)$ and
	$$
	\lim_{r\searrow 0}\dashint_{B_r(x)}\abs{F-F^r(x)}\dd{\mass}=0.
	$$
	\end{enumerate}
	Finally, if $\tilde{F}^l, \tilde{F}^r, \tilde{\nu}_F^S$ is another triplet as above, then for $\abs{\DIFF F}\text{-a.e.\ }x\in S_f$ either it holds $$(\tilde{F}^l(x), \tilde{F}^r(x))=({F}^l(x), {F}^r(x))\quad\text{and}\quad\tilde{\nu}_F^S(x)=\nu_F^S(x)$$ or it holds $$(\tilde{F}^l(x), \tilde{F}^r(x))=({F}^r(x), {F}^l(x))\quad\text{and}\quad\tilde{\nu}_F^S(x)=-\nu_F^S(x),$$
	and, for $\abs{\DIFF F}\text{-a.e.\ }x\in\XX\setminus S_f$,
	$$\tilde{F}^l(x)= \tilde{F}^r(x)={F}^l(x)= {F}^r(x).$$
\end{prop}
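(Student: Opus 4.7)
The plan is to reduce to the scalar case $n=1$ and then glue components together using the sign-alignment provided by Lemma~\ref{oknormal}.

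\textbf{Scalar case.} Fix $f \in \BVv$. By \eqref{coareaeqdiff} I pick a countable dense $D \subseteq \RR$ such that $E_t \defeq \{f>t\}$ has finite perimeter for every $t \in D$ and such that the conclusions of Theorem~\ref{uniquenesstaper} and Lemma~\ref{normsublevel} hold at $\HH^h$-a.e.\ point of $\mathcal{F}_n E_t$; the union of all exceptional sets is $\HH^h$-negligible, hence $\abs{\DIFF f}$-negligible by \eqref{diffllxxxeq}. For $\abs{\DIFF f}$-a.e.\ $x \in S_f$ and any $t_1 < t_2$ in $D \cap (f^\wedge(x), f^\vee(x))$, the symmetric difference $E_{t_1} \Delta E_{t_2} \subseteq \{f \le t_2\}$ has density $0$ at $x$ by the definition of $f^\vee(x)$, so Lemma~\ref{oknormal} forces $\nu_{E_{t_1}}(x) = \nu_{E_{t_2}}(x)$. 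I will set $\nu_f^S(x)$ equal to this common value (consistent with $\nu_f(x)$ via Lemma~\ref{normsublevel}), $f^r \defeq f^\vee$, $f^l \defeq f^\wedge$ on $S_f$, and $f^r = f^l \defeq \bar f$ elsewhere. Property~(iii) will follow by choosing $E \defeq E_t$ for some $t \in D \cap (f^\wedge(x), f^\vee(x))$: inside $E$ one has $f \ge t$ and $\{f > f^\vee(x)+\epsilon\}$ has density $0$ at $x$ for every $\epsilon>0$, so the average of $f$ on $E \cap B_r(x)$ converges to $f^\vee(x)$ (after truncating if $f$ is unbounded); the complementary statement on $\XX \setminus E$ is analogous. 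Property~(iv) is the Lebesgue point property of $f$ at $\HH^h$-a.e.\ point of $\XX \setminus S_f$, which holds on the $\PI$ space $\XX$.

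\textbf{Vector case.} I pick a Borel selection $i^\star \colon S_F \to \{1, \dots, n\}$ with $x \in S_{F_{i^\star(x)}}$ (e.g.\ the smallest such index) and set $\nu_F^S \defeq \nu_{F_{i^\star}}^S$. For every other $j$ and $\abs{\DIFF F}$-a.e.\ $x \in S_{F_{i^\star}} \cap S_{F_j}$, I apply Lemma~\ref{oknormal} to $E^{i^\star}_t \defeq \{F_{i^\star} > t\}$ and $E^j_s \defeq \{F_j > s\}$ with $t,s$ in the respective jump intervals at $x$; this produces a sign $\sigma_j(x) \in \{\pm 1\}$, independent of the particular choice of $t$ and $s$ (varying them changes the sets by density-$0$ contributions at $x$, as in the scalar step), such that $\nu_{F_j}^S(x) = \sigma_j(x) \nu_F^S(x)$. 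Then $(F_j^r(x), F_j^l(x)) \defeq (F_j^\vee(x), F_j^\wedge(x))$ if $\sigma_j(x) = +1$ and the swap otherwise, while $F_j^r = F_j^l \defeq \bar{F_j}(x)$ when $x \notin S_{F_j}$. Identity~(ii) then holds componentwise by construction together with \eqref{coordnorm}. For~(iii), using the common set $E \defeq E^{i^\star}_t$, the case $j = i^\star$ is the scalar step; for $j \ne i^\star$ with $x \in S_{F_j}$ the sign consistency from Lemma~\ref{oknormal} guarantees that $F_j$ averages to $F_j^r(x)$ on $E$ and $F_j^l(x)$ on $\XX \setminus E$; for $j$ with $x \notin S_{F_j}$ it follows from the Lebesgue-point property of $F_j$ at $x$ combined with the positive density of both $E$ and $\XX \setminus E$ at $x$ (both equal to $1/2$ by Theorem~\ref{uniquenesstaper}).

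\textbf{Uniqueness and main obstacle.} For the uniqueness part, any other triplet $(\tilde F^r, \tilde F^l, \tilde \nu_F^S)$ comes with its own set $\tilde E$ in~(iii); both $E$ and $\tilde E$ have density $1/2$ at $x$, so $E \Delta \tilde E$ has asymptotic density either $0$ or $1$ at $x$, and~(iii) forces either equality of the two pairs $(F^r, F^l)$ and $(\tilde F^r, \tilde F^l)$ or their swap, after which~(ii) pins down $\tilde\nu_F^S = \pm \nu_F^S$ with the corresponding sign. On $\XX \setminus S_F$, property~(iv) forces $\tilde F^r = \tilde F^l = \bar F$. The main obstacle I anticipate is the careful bookkeeping of the countable family of $\abs{\DIFF F}$-negligible exceptional sets needed to apply Lemma~\ref{oknormal} simultaneously for every pair $(i^\star, j)$ and every pair of levels from a countable dense subset of each jump interval. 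A secondary subtlety is verifying~(iii) uniformly across all coordinates on a single set $E$; this hinges on the density-$1/2$ property at reduced-boundary points in the $\RCD(K,N)$ setting, which ensures that Lebesgue averages on both $E$ and $\XX \setminus E$ genuinely see positive mass.
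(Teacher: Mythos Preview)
Your approach is essentially the same as the paper's: construct $\nu_F^S$ from a ``first jump index'' selection (the paper does this iteratively, layering $S_{F_1}, S_{F_2}\setminus S_{F_1},\dots$, which is equivalent), align the remaining coordinates via Lemma~\ref{oknormal}, and read off $F^r,F^l$ from the sign. The overall structure is correct, but two steps in your sketch are imprecise.

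\textbf{Property~(iii), scalar case.} Knowing $f>t$ on $E_t$ and that $\{f>f^\vee(x)+\epsilon\}$ has density~$0$ only controls the average of $|f-f^\vee(x)|$ on $E_t\cap B_r(x)$ from one side: you get a bound of order $(f^\vee(x)-t)+\epsilon$, not $\epsilon$. To close the gap you must also use that for $t'\in D$ close to $f^\vee(x)$ the set $E_t\setminus E_{t'}=\{t<f\le t'\}$ has density~$0$ at $x$ (both $E_t,E_{t'}$ have density $1/2$ and $E_{t'}\subseteq E_t$), so that on $E_t\cap B_r(x)$, modulo density-$0$ sets, $f\in(t',f^\vee(x)+\epsilon]$. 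This squeeze is exactly the argument in Step~2 of the proof of Lemma~\ref{convergenceprec}, which the paper invokes.

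\textbf{Uniqueness.} The implication ``$E$ and $\tilde E$ both have density $1/2$, so $E\Delta\tilde E$ has density $0$ or $1$'' is false without further input. The paper's argument is simpler and does not need this: since $E$ has positive upper density, at least one of $E\cap\tilde E$, $E\setminus\tilde E$ has positive upper density; on whichever one does, \eqref{asintoticaint} forces $F^r(x)=\tilde F^r(x)$ or $F^r(x)=\tilde F^l(x)$ respectively, and similarly for $F^l(x)$. (A~posteriori, using $F^r(x)\ne F^l(x)$ from item~(i), one can then deduce your density dichotomy, but it is a consequence, not a premise.)
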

\begin{proof}
	Using Lemma \ref{normsublevel} and standard considerations, we can find a countable dense subset of $\RR$, $\{t_j\}_{j\in\NN}$, $0\notin \{t_j\}_j$ such that for every $i=1,\dots,n$, we have
	\begin{enumerate}[label=\roman*)]
		\item for \begin{equation}\notag
			E_{i,j}=\begin{cases}
				\{F_i>t_j\}\quad&\text{if } t_j> 0, \\
				\{F_i<t_j\}  &\text{if } t_j<0,
			\end{cases}
		\end{equation}
		$E_{i,j}$ is a set of finite perimeter and finite mass,
		\item $S_{F_i}= \bigcup_{j\in\NN} \partial^* E_{i,j}$,
		\item for every $i=1,\dots,n$,
		\begin{equation}\notag
			\nu_{E_{i,j}}=
			\begin{cases}
				\pibar_{{\chi_{E_{i,j}}}}({\nu}_{F_i})\quad&\text{if }t_j>0, \\
				-\pibar_{{\chi_{E_{i,j}}}}({\nu}_{F_i})\quad&\text{if }t_j<0.
			\end{cases} 
		\end{equation}
	\end{enumerate}  
	We recall that thanks to \eqref{finitenessprec}, $F_i^\vee,\ F_i^\wedge,\ \overline{F_i}\in\RR$ for $\HH^h$-a.e.\ $x\in\XX$ and for every $i=1,\dots,n$.
	We set $F_i^l=F_i^r\defeq\overline{F_i}$ if $x\notin S_{F_i}$. We define $\nu_F^S$ as an element of $\tanXcap$, then it is enough to take the projection $\pibar_{F}$ to obtain a vector field of $\tanbvX{F}$ as in the statement.
	We define $F_i^l$, $F_i^r$ and ${\nu}_F^S$ iteratively. More precisely, we define $(F_1^r,F_1^l)=(F_1^\vee,F_1^\wedge)$ on $S_{F_1}$ and $\nu^S_F={\nu}_{F_1}\chi_{S_{F_1}}$. At step $k$, let $G_k\defeq \bigcup_{i=1}^{k-1} S_{F_i}$. We define $(F_k^r,F_k^l)=(F_k^\vee,F_k^\wedge)$ on $S_{F_k}\setminus G_k$ and we add to ${\nu}^S_F$ the $\capa$ vector field ${\nu}_{F_k}\chi_{S_{F_k}\setminus G_k}$. Now, Lemma \ref{oknormal} and the construction above imply that at $\HH^h$-a.e.\ $x\in S_{F_k}\cap G_k$ it holds ${\nu}_{F_k}=\pm\nu_F^S$, whence $(F_k^r,F_k^l)$ is uniquely defined on $S_{F_k}\cap G_k$ by the request $ii)$.
	
	We prove now \eqref{asintoticaint}.  First, arguing as in the proof of \cite[Theorem 3.5]{kinkorshatuo} and taking into account \eqref{finitenessprec}, we see that we can assume with no loss of generality that $F_i\in\Lpi$ for every $i=1,\dots,n$.
	Up to discarding an $\HH^h$-negligible set, we may restrict ourselves to the set of points of $S_F$ at which the density of every set $E_{i,j}$ is in $\{0,1/2,1\}$, by \eqref{unmezzoqoeq}.
	Notice that if $n=1$ and $x\in\partial^*{E_{i,j}}$, then \eqref{asintoticaint} holds with either $E_{i,j}$ or $\XX\setminus E_{i,j}$ in place of $E$. This follows from a standard argument as the one used at the end of Step 2 of the proof of Lemma \ref{convergenceprec}. But then the same conclusion holds also if $n\ge 1$, up to $\HH^h$-negligible subsets, by Lemma \ref{oknormal} (thanks to our choice of $F^l$ and $F^r$). Also, \cite[Theorem 3.5]{kinkorshatuo} proves the last item.
	
	We prove now uniqueness, in the sense explained at the end of the statement. In $\XX\setminus S_f$, it is clear, so let us focus on $S_f$. We just have to prove that at $\abs{\DIFF F}$-a.e.\ $x\in S_f$, $(\tilde{F}^l(x),\tilde{F}^r(x))$ coincides, up to the order, with $({F}^l(x),{F}^r(x))$, then we can use \eqref{normalbig} to conclude. We can assume that at $x$ there exist two sets of finite perimeter $E,\tilde{E}$ with $x\in\partial^*E\cap\partial^*\tilde{E}$ and such that \eqref{asintoticaint} holds and also the variant of \eqref{asintoticaint} for $\tilde{F}^l(x),\tilde{F}^r(x),\tilde{E}$ holds. Now, notice that it holds that 
	$$
	0<\limsup_{r\searrow 0} \frac{\mass(E\cap B_r(x))}{\mass(B_r(x))}\le \limsup_{r\searrow 0} \frac{\mass(E\cap\tilde{E}\cap B_r(x))}{\mass(B_r(x))}+\limsup_{r\searrow 0} \frac{\mass(E\setminus\tilde{E}\cap B_r(x))}{\mass(B_r(x))}.
	$$
	Therefore, either $0<\limsup_{r\searrow 0} \frac{\mass(E\cap\tilde{E}\cap B_r(x))}{\mass(B_r(x))}$ or $0<\limsup_{r\searrow 0} \frac{\mass(E\setminus\tilde{E}\cap B_r(x))}{\mass(B_r(x))}$. In the first case, we infer that $\tilde{F}^r(x)={F}^r(x)$, in the second case that $\tilde{F}^l(x)={F}^r(x)$. We can deal similarly with $F^l(x)$.
\end{proof}
\begin{rem}
	A careful inspection of the proof of Proposition \ref{normalvectval} shows that in item $iii)$ we can replace the integral $\dashint_{B_r(x)\cap E} \abs{F- F^r(x)}\dd{\mass}$ with $\dashint_{B_r(x)\cap E} \abs{F- F^r(x)}^{{Q}/(Q-1)}\dd{\mass}$ for any $Q=Q(R)$ given as in \eqref{poincare2} and similarly for the integral involving $F^l$. A similar consideration holds for item $iv)$.\fr
\end{rem}
\begin{defn}
	Let $(\XX,\dist,\mass)$ be an $\RCD(K,N)$ space and $F\in\BVv^n$. Define the functions $F^l,F^r:\XX\rightarrow\RR^n$ and the vector field ${\nu}^S_F$ as a suitable triplet given by Proposition \ref{normalvectval} above. Define $\bar{F}:\XX\rightarrow\RR^n$ as $$\bar{F}\defeq\frac{F^l+F^r}{2}.$$
\end{defn}
There may be more than one possible choice for the triplet $(F^l,F^r,{\nu}_F^S)$, however, the quantity $(F^l-F^r)\nu_F^S$ is well defined $\abs{\DIFF F}$-a.e.\ on $S_F$. As usual, we often consider $\nu_F^S$ as an element of $\tanXcap$ (defined $\abs{\DIFF F}\mres S_F$-a.e.).

\subsection{Calculus rules}\label{calcrules}
In this subsection, we prove that the usual calculus rules (Leibniz rule and chain rule) are satisfied by functions of bounded variation on $\RCD(K,N)$ spaces. Namely, we study the behaviour of $\DIFF(\phi\circ F)$ and $\DIFF (f g)$ for suitable functions. Recall the definition of distributional differential given in Subsection \ref{sectBVRCD} and the algebraic rules in Definition \ref{formalcalc}.

We start by proving the chain rule for scalar functions of bounded variation. We first show the chain rule for the total variation, which relies on the coarea formula and a change of variables, following \cite{Ambanewproof}. The full chain rule is then obtained via an integration by Cavalieri's formula.
\begin{prop}[Chain rule]\label{volpertprop}
	Let $(\XX,\dist,\mass)$ be an $\RCD(K,N)$ space, $f\in\BVv$ and $\phi\in \LIP(\RR)$ such that $\phi(0)=0$. Then $\phi\circ f\in\BVv$ and%
\begin{equation}\label{volpert}
\DIFF(\phi\circ f)=\left(\int_0^1 \phi'(t f^\vee+(1-t) f^\wedge)\dd{t}\right)\DIFF f.
\end{equation}
\end{prop}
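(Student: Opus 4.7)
The plan is to combine sublevel-set integration by parts with a Cavalieri decomposition and to identify the resulting double integral via the coarea formula. Since $\phi$ is $L$-Lipschitz with $\phi(0)=0$, the general estimate $|\DIFF(\phi\circ f)|\le L|\DIFF f|$ recalled in the preliminaries gives $\phi\circ f\in\BV(\XX)$. By mollifying $\phi$ we reduce to $\phi\in C^1$ with bounded derivative (both sides of \eqref{volpert} are stable under the limit via dominated convergence, both on the integration-by-parts test and on the inner $t$-integral), and by truncation together with Remark \ref{interpretationint} we further assume $f\in\Lp^\infty(\mass)$. Cavalieri's formula (using $\phi(0)=0$) then gives
\begin{equation*}
\phi(f(x))=\int_0^\infty \phi'(r)\chi_{\{f>r\}}(x)\dd{r}-\int_{-\infty}^0\phi'(r)\chi_{\{f<r\}}(x)\dd{r}.
\end{equation*}
Testing this identity against $\dive v$ for $v\in\Cqcvfinf\cap D(\dive)$ with bounded support, applying Theorem \ref{weakder2} to each sublevel set, and invoking Lemma \ref{normsublevel} together with $|\DIFF\chi_{\{f<r\}}|=|\DIFF\chi_{\{f>r\}}|$ for $\LL^1$-a.e.\ $r$ yields, via Fubini,
\begin{equation}\label{planA}
\int_\XX(\phi\circ f)\dive v\dd{\mass}=-\int_\RR\phi'(r)\int_\XX v\cdot\nu_f^r\dd{|\DIFF\chi_{\{f>r\}}|}\dd{r}.
\end{equation}

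By the uniqueness statement in Theorem \ref{weakder2}, to conclude \eqref{volpert} it suffices to rewrite the right-hand side of \eqref{planA} as $-\int_\XX v\cdot\bigl(\int_0^1\phi'(tf^\vee+(1-t)f^\wedge)\dd{t}\bigr)\nu_f\dd{|\DIFF f|}$. I view the double integral in \eqref{planA} as integration against the measure $\mu\defeq\dd{r}\otimes\dd{|\DIFF\chi_{\{f>r\}}|}$ on $\XX\times\RR$: by \eqref{coareaeq} its $\XX$-marginal is $|\DIFF f|$, and I expect a disintegration $\mu=|\DIFF f|\otimes\eta_x$, with $\eta_x=\delta_{\bar f(x)}$ for $|\DIFF f|$-a.e.\ $x\notin S_f$ and $\eta_x$ uniform on $[f^\wedge(x),f^\vee(x)]$ for $|\DIFF f|$-a.e.\ $x\in S_f$. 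The jump-set part follows from combining \eqref{coareaeq} with the representation \eqref{reprformula3}, which together produce the key identity
\begin{equation*}
|\DIFF f|\mres S_f=\tfrac{\omega_{n-1}}{\omega_n}(f^\vee-f^\wedge)\HH^h\mres S_f;
\end{equation*}
the diffuse part follows because, for $x\notin S_f$ and $r\ne\bar f(x)$, $x$ has density $0$ or $1$ in $\{f>r\}$ and therefore lies outside $\partial^*\{f>r\}$, while $|\DIFF\chi_{\{f>r\}}|$ is supported on that essential boundary by \eqref{reprformula3}. Complementing the disintegration, Lemma \ref{oknormal} applied to the nested family $\{\{f>r\}\}_r$ shows that common boundary points share the same normal without sign ambiguity, so $\nu_f^r(x)=\nu_f(x)$ in $\tanXcap$ for every $r\in\supp\eta_x$ (arguing along a countable dense set of levels to avoid exceptional sets).

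With these ingredients, for $|\DIFF f|$-a.e.\ $x\in S_f$ the change of variables $r=tf^\vee(x)+(1-t)f^\wedge(x)$ gives
\begin{equation*}
\int_\RR\phi'(r)\nu_f^r(x)\dd{\eta_x(r)}=\nu_f(x)\int_0^1\phi'(tf^\vee(x)+(1-t)f^\wedge(x))\dd{t},
\end{equation*}
and the same identity holds trivially on $\XX\setminus S_f$. Substituting into the Fubini-swapped version of \eqref{planA} yields \eqref{volpert}. The main obstacle is precisely the disintegration step and its compatibility with the fine-structure description of Proposition \ref{normalvectval}: the essential use of \eqref{reprformula3} is what confines the argument to the finite-dimensional $\RCD(K,N)$ setting.
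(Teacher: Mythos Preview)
Your proposal is correct and follows essentially the same route as the paper: Cavalieri on $\phi\circ f$, integration by parts on level sets via Theorem~\ref{weakder2}, identification of the level-set normals with $\nu_f$ through Lemma~\ref{normsublevel}, and then a Fubini/coarea argument exploiting the relations \eqref{relations} and the representation \eqref{reprformula3} to recognise the averaged derivative. The paper packages the last step slightly differently---it first computes $|\DIFF(\phi\circ f)|$ explicitly (your ``key identity'' on $S_f$ is exactly \eqref{volpabs2}) and reduces at the outset, by linearity, to $\phi$ bilipschitz and strictly increasing so that the coarea change of variables is monotone; your reduction to $C^1$ via mollification and your disintegration phrasing are equivalent reformulations. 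One small redundancy: once you have invoked Lemma~\ref{normsublevel} to obtain \eqref{planA}, the compatibility $\nu_f^r=\nu_f$ is already in hand, so the separate appeal to Lemma~\ref{oknormal} on the nested family is unnecessary.
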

We comment briefly on the well-posedness of \eqref{volpert}. Recalling \eqref{finitenessprec}, we see that it suffices then to check that \begin{equation}\notag
	\abs{\DIFF f} (A)=0,
\end{equation}
where $$A\defeq \left\{x\in\XX\setminus S_f: \phi\text{ is not differentiable at } \bar{f}(x)\right\}.$$

We can then use  \eqref{coareaeqdiff}, the relations in \eqref{relations}, \eqref{reprformula3} and finally Rademacher's Theorem to compute
\begin{equation*}
	\begin{split}
		\abs{\DIFF f} (A)&=\int_\RR \abs{\DIFF \chi_{\{f>t\}}}(A) \dd{t} =\frac{\omega_{n-1}}{\omega_n}\int_\RR \HH^h(A\cap \partial^*\{f>t\}) \dd{t}\\&= \frac{\omega_{n-1}}{\omega_n}\int_\RR \HH^h(\{x\in\XX\setminus S_f:\phi\text{ is not differentiable at $t$ and $\bar{f}(x)=t$}\}) \dd{t}=0.
	\end{split}
\end{equation*}
\begin{proof}
	We just have to show 	
	 \begin{equation}\notag
			\int_\XX \phi\circ f  \dive\, v\dd{\mass}=-\int_\XX v\,\cdot\,\nu_f \phi'(\bar{f})\dd{\abs{\DIFF f}}\mres (\XX\setminus S_f)-\int_\XX v\,\cdot\,\nu_f\frac{\phi(f^\vee)-\phi (f^\wedge)}{f^\vee-f^\wedge}\dd{\abs{\DIFF f}}\mres S_f
	\end{equation}
for every $v\in\TestV(\XX)$. 
	Using linearity, we can assume that $\phi$ is also bilipschitz and strictly increasing with no loss of generality.
	For the first part of the proof we follow the arguments contained in \cite{Ambanewproof,ambmirpal04}. 
	 Clearly, $\phi\circ f\in\BVv$. 
	Notice also that $\abs{\DIFF (\phi\circ f)}\ll\abs{\DIFF  f}\ll\abs{\DIFF (\phi\circ f)}$ and $S_{\phi\circ f}=S_f$. 
	
	We show now 
	\begin{equation}\label{volpabs}
		\abs{\DIFF(\phi\circ f)}=\phi'(\bar{f}){\abs{\DIFF f}}\mres (\XX\setminus S_f)+\frac{\phi(f^\vee)-\phi (f^\wedge)}{f^\vee-f^\wedge}{\abs{\DIFF f}}\mres S_f.
	\end{equation}
	The following relations can be easily proved with standard measure theoretic arguments (see e.g.\ \cite[Proposition 5.2]{ambmirpal04}):
	\begin{equation}\label{relations}
		\begin{aligned}
			&\text{if }x\in S_f \text{ and }t\in (f^\wedge(x),f^\vee(x))\quad&&\text{then } x\in\partial^*\{f>t\},\\
			&\text{if } x\in\partial^*\{f>t\} &&\text{then }t\in [f^\wedge(x),f^\vee(x)],\\
			&\qquad\text{in particular }\\
			&\text{if }x\notin S_f\text{ and }x\in\partial^*\{f>t\} 		&&\text{then } \bar{f}(x)=t.\\
		\end{aligned}
	\end{equation}

	Take $B\subseteq \XX\setminus S_f$ Borel. Using the coarea formula \eqref{coareaeqdiff} together with a change of variables, we obtain
	\begin{equation}\label{volpabs1}
		\begin{split}
			\abs{\DIFF (\phi\circ f)}(B)&=\int_\RR \per(\{\phi\circ f>t\},B)\dd{t}=\int_\RR \int_B \phi'(t)\dd{ \per(\{ f>t\},\,\cdot\,)}\dd{t}\\&=\int_\RR \int_B \phi'(\bar{f}(x))\dd{ \per(\{ f>t\},\,\cdot\,)}\dd{t}=\int_B \phi'(\bar{f}(x))\dd{\abs{\DIFF f}}
		\end{split}
	\end{equation}
	where in the next to last equality we used \eqref{relations}, thanks to \eqref{reprformula3}, and the last equality is a consequence of the coarea formula \eqref{coareaeq} as in \cite[Proposition 5.4]{ambmirpal04}.
	Take now $B\subseteq S_f$ Borel. Using \eqref{coareaeqdiff} together with a change of variables we obtain
	\begin{equation}\notag
		\begin{split}
			\abs{\DIFF (\phi\circ f)}(B)&=\int_\RR \per(\{\phi\circ f>t\},B)\dd{t}=\int_\RR \int_B \phi'(t)\dd{ \per(\{ f>t\},\,\cdot\,)}\dd{t}
			\\&=\frac{\omega_{n-1}}{\omega_n}\int_\RR\int_B \phi'(t)\chi_{\partial^*\{f>t\}}(x)\dd{\HH^h\mres S_f}(x)\dd{t}
			\\&=\frac{\omega_{n-1}}{\omega_n}\int_B\int_\RR \phi'(t)\chi_{\partial^*\{f>t\}}(x)\dd{t}\dd{\HH^h \mres S_f}(x)
		\end{split}
	\end{equation}
	where in the next to last equality we used \eqref{reprformula3}. In the application of Fubini's theorem, the measurability of the map $(t,x)\mapsto\chi_{\partial^*\{f>t\}}(x)$ can be proved using standard arguments: just notice that for any $r$ the maps $$(x,t)\mapsto \frac{\mass (B_r(x)\cap\{f>t\})}{\mass( B_r(x))}\quad\text{and}\quad(x,t)\mapsto\frac{\mass (B_r(x)\setminus\{f>t\})}{\mass( B_r(x))}$$
	are continuous everywhere up to a set of null $( \HH^h\mres S_f)\otimes\LL^1$ measure.
	 Using also \eqref{relations},
	\begin{equation}\notag
	\frac{\omega_{n-1}}{\omega_n}	\int_B\int_\RR \phi'(t)\chi_{\partial^*\{f>t\}}(x)\dd{t}\dd{\HH^h}(x)=
		\frac{\omega_{n-1}}{\omega_n}\int_B\int_{f^\wedge(x)}^{f^\vee(x)} \phi'(t)\dd{t}\dd{\HH^h}(x).
	\end{equation}
	Therefore
	\begin{equation}\label{volpabs2}
		\abs{\DIFF (\phi\circ f)}(B)=\frac{\omega_{n-1}}{\omega_n}\int_B (\phi(f^\vee(x))-\phi(f^\wedge(x)))\dd{\HH^h}(x).
	\end{equation}
	Taking into account \eqref{volpabs1} and \eqref{volpabs2}, also with $\phi(s)=s$, we conclude the proof of \eqref{volpabs}.
	
	Using Cavalieri's integration formula together with a change of variables, taking into account Lemma \ref{normsublevel},
	\begin{equation}\notag
		\begin{split}
			&-\int_\XX\phi\circ f\dive\, v \dd{\mass}=-\int_0^\infty \int_{\{\phi\circ f>t\}} \dive\, v\dd{\mass}\dd{t}+\int_{-\infty}^0 \int_{\{\phi\circ f<t\}} \dive\, v\dd{\mass}\dd{t}\\&\quad
			=\int_0^\infty \phi'(t)\int_\XX v\,\cdot\, \nu_{\{ f>t\}}\dd{\abs{\DIFF\chi_{\{ f>t\}}}}\dd{t}-\int_{-\infty}^0\phi'(t) \int_\XX v\,\cdot\, \nu_{\{ f<t\}}\dd{\abs{\DIFF\chi_{\{ f<t\}}}}\dd{t}
			\\&\quad=\int_0^\infty \phi'(t)\int_\XX v\,\cdot\, {\nu}_f\dd{\abs{\DIFF\chi_{\{ f>t\}}}}\dd{t}+\int_{-\infty}^0\phi'(t) \int_\XX v\,\cdot\, {\nu}_f\dd{\abs{\DIFF\chi_{\{ f>t\}}}}\dd{t}\\&\quad=\int_\RR \int_\XX v\,\cdot\, {\nu}_f \phi'(t)\dd{\abs{\DIFF\chi_{\{f>t\}}}}\dd{t}=\int_\RR \int_\XX v\,\cdot\, {\nu}_f \dd{\abs{\DIFF\chi_{\{\phi\circ f>t\}}}}\dd{t}\\&\quad= \int_\XX v\,\cdot\,\nu_f \dd{\abs{\DIFF( \phi\circ f)}}
		\end{split}
	\end{equation}
	for every $v\in\TestV(\XX)$, where the last equality above is due to \eqref{coareaeq}.	
%
\end{proof}

The following proposition follows as intermediate step in the computation of $\abs{\DIFF(\phi\circ f)}$ in \eqref{volpabs2} (taking $\phi(s)=s$).
\begin{prop}\label{veryweakchain}
	Let $(\XX,\dist,\mass)$ be an $\RCD(K,N)$ space of essential dimension $n$ and let $f\in\BVv$. Then 
	\begin{equation}\label{dfmressf}
		\abs{\DIFF f}\mres S_f=(f^\vee-f^\wedge)\frac{\omega_{n-1}}{\omega_n}\HH^h\mres S_f.
	\end{equation}
\end{prop}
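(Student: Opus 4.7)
The plan is to extract the argument hidden inside the proof of Proposition \ref{volpertprop} by running it with $\phi$ equal to the identity, where the chain-rule computation becomes a pure representation formula for $\abs{\DIFF f}\mres S_f$. Note that even though in the proof of \eqref{volpabs2} the map $\phi$ was assumed bilipschitz and strictly increasing to derive the chain rule, the steps that produce \eqref{volpabs2} use only the coarea formula together with \eqref{reprformula3} and \eqref{relations}, so they apply verbatim for $\phi(s)=s$.

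Concretely, I would fix $B\subseteq S_f$ Borel and compute using the coarea formula \eqref{coareaeqdiff} together with the representation \eqref{reprformula3}:
\begin{equation}\notag
\abs{\DIFF f}(B)=\int_\RR \per(\{f>t\},B)\dd{t}=\frac{\omega_{n-1}}{\omega_n}\int_\RR \HH^h(B\cap \partial^*\{f>t\})\dd{t}.
\end{equation}
Writing this as a double integral against $\HH^h\mres S_f$ and applying Fubini, using that the measurability of $(x,t)\mapsto\chi_{\partial^*\{f>t\}}(x)$ can be justified as in the proof of Proposition \ref{volpertprop} (the density quotients are continuous in $(x,t)$ up to negligible sets), I obtain
\begin{equation}\notag
\abs{\DIFF f}(B)=\frac{\omega_{n-1}}{\omega_n}\int_B \int_\RR \chi_{\partial^*\{f>t\}}(x)\dd{t}\dd{\HH^h\mres S_f}(x).
\end{equation}

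The key step is then to invoke \eqref{relations}: for $x\in S_f$, the inner slice $\{t\in\RR: x\in\partial^*\{f>t\}\}$ contains $(f^\wedge(x),f^\vee(x))$ and is contained in $[f^\wedge(x),f^\vee(x)]$, so the inner integral equals $f^\vee(x)-f^\wedge(x)$ for $\HH^h$-a.e.\ $x\in S_f$ (the endpoints form a Lebesgue-null set). Substituting yields
\begin{equation}\notag
\abs{\DIFF f}(B)=\frac{\omega_{n-1}}{\omega_n}\int_B (f^\vee(x)-f^\wedge(x))\dd{\HH^h}(x),
\end{equation}
which is precisely \eqref{dfmressf} restricted to $B$. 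Since $B\subseteq S_f$ is arbitrary, the identity of measures follows.

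I do not expect a real obstacle here: everything needed is already in place. The only mild subtlety is the Fubini step and the fact that, thanks to \eqref{finitenessprec}, the quantity $f^\vee-f^\wedge$ is a well-defined finite $\HH^h$-measurable function on $S_f$, so the right hand side of \eqref{dfmressf} is unambiguously defined.
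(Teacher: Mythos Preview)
Your proposal is correct and is exactly the paper's approach: the paper states that Proposition \ref{veryweakchain} ``follows as intermediate step in the computation of $\abs{\DIFF(\phi\circ f)}$ in \eqref{volpabs2} (taking $\phi(s)=s$)'', and your argument unwinds precisely that computation.
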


In the following proposition, we restrict ourselves to the case $f,g$ bounded functions of bounded variation although the boundedness hypothesis can be slightly weakened using approximation arguments as done in the proof of Proposition \ref{volprop} below. 
\begin{prop}[Leibniz rule]\label{leibnizprop}
	Let $(\XX,\dist,\mass)$ be an $\RCD(K,N)$ space and $f,g\in\BVv\cap\Lpi$. Then $f g \in\BVv$ and
	\begin{equation}\label{leibniz}
	\DIFF (f g) =\bar{f}\DIFF g+\bar{g}\DIFF f.
\end{equation}
	In particular, $\abs{\DIFF (f g)}\le \abs{\bar{f}}\abs{\DIFF g}+ \abs{\bar{g}}\abs{\DIFF f}$.
\end{prop}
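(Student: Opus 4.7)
The plan is to reduce the Leibniz rule to the chain rule of Proposition \ref{volpertprop} via polarization, and then to identify the precise representative of the sum with the sum of precise representatives using the joint fine structure of the vector valued function $(f,g)$ provided by Proposition \ref{normalvectval}.

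\textbf{Polarization.} Fix $M \ge \|f\|_{\Lpi} + \|g\|_{\Lpi}$ and choose $\phi \in \LIP(\RR)$ with $\phi(0) = 0$ agreeing with $t \mapsto t^2$ on $[-M,M]$. Applying Proposition \ref{volpertprop} to each $h \in \{f, g, f+g\}$ (all in $\BV(\XX) \cap \Lpi$), and noting that for $\phi(t) = t^2$ one has $\int_0^1 \phi'(t h^\vee + (1-t) h^\wedge)\,\dd{t} = h^\vee + h^\wedge = 2\bar h$, yields $h^2 \in \BV(\XX)$ with $\DIFF(h^2) = 2\bar h\,\DIFF h$. The polarization identity $fg = \tfrac12[(f+g)^2 - f^2 - g^2]$ and the linearity of $F \mapsto \DIFF F$ now give $fg \in \BV(\XX)$ together with
\begin{equation*}
\DIFF(fg) = \overline{f+g}\,\DIFF(f+g) - \bar f\,\DIFF f - \bar g\,\DIFF g = (\overline{f+g} - \bar f)\DIFF f + (\overline{f+g} - \bar g)\DIFF g.
\end{equation*}
Hence \eqref{leibniz} will follow once we establish
\begin{equation*}
\overline{f+g} = \bar f + \bar g \qquad |\DIFF f|\text{- and }|\DIFF g|\text{-a.e.}
\end{equation*}

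\textbf{Additivity of precise representatives: the easy part.} Set $F \defeq (f,g) \in \BV(\XX)^2$. From Definition \ref{def:defBVv} one has $|\DIFF f|, |\DIFF g| \le |\DIFF F|$, so it suffices to verify the identity $|\DIFF F|$-a.e. At points $x \notin S_F = S_f \cup S_g$ both $f$ and $g$ admit approximate limits; their sum is the approximate limit of $f+g$ and the identity is immediate.

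\textbf{Main obstacle: joint jumps.} The delicate case is $x \in S_F$, handled via Proposition \ref{normalvectval} applied to $F$: at $|\DIFF F|$-a.e.\ such $x$ there exist a set of finite perimeter $E$ with $x \in \partial^* E$ and values $F^r(x), F^l(x) \in \RR^2$ satisfying \eqref{asintoticaint}. By construction (see item $iii)$ coupled with \eqref{normalbig} and $|\nu_F^S| = |\nu_{F_i}| = 1$), for each coordinate $i \in \{1,2\}$ one has $\{F_i^r(x), F_i^l(x)\} = \{F_i^\vee(x), F_i^\wedge(x)\}$ as multisets, regardless of whether the two coordinates have aligned or opposite jump orientations; in particular $F_i^r(x) + F_i^l(x) = 2\bar F_i(x)$. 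On the other hand, the asymptotic relations \eqref{asintoticaint} imply that $f+g$ approaches $F_1^r(x) + F_2^r(x)$ on $E$ and $F_1^l(x) + F_2^l(x)$ on $\XX \setminus E$, so the two one-sided values of $f+g$ at $x$ are precisely these two numbers. Averaging gives
\begin{equation*}
\overline{f+g}(x) = \tfrac12\bigl((F_1^r + F_2^r) + (F_1^l + F_2^l)\bigr)(x) = \bar f(x) + \bar g(x),
\end{equation*}
independently of whether $f+g$ genuinely jumps at $x$ (i.e.\ of whether $F_1^r + F_2^r \ne F_1^l + F_2^l$). This closes the verification and hence proves \eqref{leibniz}. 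The pointwise bound $|\DIFF(fg)| \le |\bar f||\DIFF g| + |\bar g||\DIFF f|$ is then immediate from the triangle inequality for the pointwise norm of the module-valued measure.
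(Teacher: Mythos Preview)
Your proof is correct and follows the same polarization strategy as the paper: apply the scalar chain rule (Proposition \ref{volpertprop}) with $\phi(t)=t^2$ to $f$, $g$, and $f+g$, then subtract. The only point of divergence is how you justify $\overline{f+g}=\bar f+\bar g$. The paper obtains this identity $\HH^h$-a.e.\ in one line from Lemma \ref{convergenceprec}, since $\heat_s(f+g)=\heat_s f+\heat_s g$ and both sides converge to the respective precise representatives. Your route via the joint fine structure of $(f,g)$ from Proposition \ref{normalvectval} is valid but considerably more elaborate; the heat-flow argument is the shorter path here.
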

\begin{proof}
	Using the chain rule of Proposition \ref{volpertprop} with $\phi\in\LIP(\RR)$ that coincides with $t\mapsto t^2$ on a sufficiently large neighbourhood of $0$, we see that 
	\begin{align}
		\DIFF (f+g)^2&=2(\overline{f+g})\DIFF (f+g)=2(\overline{f}+\overline{g})\DIFF (f+g)\label{leib1},\\
		\DIFF f^2&=2 \overline{f}\DIFF f\label{leib2},\\
		\DIFF g^2&=2 \overline{g}\DIFF g\label{leib3}.
	\end{align}
Here we used that $\overline{f+g}=\overline{f}+\overline{g} \ \HH^h$-a.e.\ which follows e.g.\ from Lemma \ref{convergenceprec}.
	Using the linearity of the map $f\mapsto \DIFF f$, subtracting \eqref{leib2} and \eqref{leib3} from \eqref{leib1}, we obtain \eqref{leibniz}.
\end{proof}
\begin{prop}
Let $(\XX,\dist,\mass)$ be an $\RCD(K,N)$ space and $f,g\in\BVv\cap\Lpi$.
Then \begin{equation}\notag	\lim_{t\searrow 0} \int_\XX g v\cdot\nabla\heat_t f\dd{\mass}=\int_\XX \bar{g} v \,\cdot\,\nu_f \dd{\abs{\DIFF f}}
\end{equation}
for every $v\in \Cqcvfinf\cap D(\dive)$.
\end{prop}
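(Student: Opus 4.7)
The plan is to exploit the Leibniz rule (Proposition \ref{leibnizprop}) twice, which lets us avoid any approximation of $g$. For every $t>0$ one has $\heat_t f \in \HSs \cap \Lpi \cap \BVv$, the $\BV$-membership following from the Bakry-\'Emery estimate (Proposition \ref{bakryemry}) which gives $\|\nabla \heat_t f\|_{\Lp^1}\le e^{-Kt}\abs{\DIFF f}(\XX)$, and $\heat_t f$ admits a Lipschitz (in particular continuous) representative so that $\overline{\heat_t f}=\heat_t f$ everywhere. The first step is to apply Leibniz to the product $g\,\heat_t f$, which yields
\[
\DIFF(g\,\heat_t f)=\bar g\,\nabla \heat_t f\,\mass+\heat_t f\,\DIFF g,
\]
the first summand being absolutely continuous. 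Pairing both sides with $v\in \Cqcvfinf\cap D(\dive)$ via the integration by parts formula of Theorem \ref{weakder2} and using $\bar g=g$ $\mass$-a.e.\ I get
\[
\int_\XX g\,v\cdot\nabla \heat_t f\,\dd \mass+\int_\XX \heat_t f\,v\cdot\nu_g\,\dd\abs{\DIFF g}=-\int_\XX g\,\heat_t f\,\dive v\,\dd\mass.
\]

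Next I pass to the limit $t\searrow 0$. Since $f\in \Lpi\cap \Lpu\subseteq\Lpt$, the $\Lpt$-contraction $\heat_t f\to f$ combined with $g\dive v\in\Lpt$ (because $g\in\Lpi$ and $\dive v\in\Lpt$) makes the right hand side converge to $-\int fg\dive v\,\dd\mass$. For the boundary term, Lemma \ref{convergenceprec} applied to $f$ gives $\heat_t f\to \bar f$ $\HH^h$-a.e., hence $\abs{\DIFF g}$-a.e.\ because $\abs{\DIFF g}\ll\HH^h$ by \eqref{diffllxxxeq}. The uniform bound $|\heat_t f|\le \|f\|_{\Lpi}$ and finiteness of $\abs{\DIFF g}$ permit dominated convergence. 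I thus obtain the intermediate identity
\[
\lim_{t\searrow 0}\int_\XX g\,v\cdot\nabla \heat_t f\,\dd\mass=-\int_\XX fg\,\dive v\,\dd\mass-\int_\XX \bar f\,v\cdot\nu_g\,\dd\abs{\DIFF g}.
\]

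To conclude, I use the Leibniz rule a second time: since $fg\in \BVv\cap \Lpi$ with $\DIFF(fg)=\bar f\DIFF g+\bar g\DIFF f$, applying Theorem \ref{weakder2} to $fg$ and pairing with $v$ gives
\[
-\int_\XX fg\,\dive v\,\dd\mass=\int_\XX \bar f\,v\cdot\nu_g\,\dd\abs{\DIFF g}+\int_\XX \bar g\,v\cdot\nu_f\,\dd\abs{\DIFF f}.
\]
Substituting this into the previous identity cancels the $\bar f$-term and leaves exactly $\int_\XX \bar g\,v\cdot\nu_f\,\dd\abs{\DIFF f}$, as desired. The main conceptual point (and potential obstacle if one tried a more naive route) is the cancellation provided by the second Leibniz: a direct approach approximating $g$ by $\heat_s g$ and controlling the error through Bakry-\'Emery fails, because the precise representative $\overline{\abs{g-\heat_s g}}$ does \emph{not} tend to zero on the jump set $S_g$ as $s\searrow 0$, so the Leibniz decomposition is essential rather than optional.
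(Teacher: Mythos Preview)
Your proof is correct and follows essentially the same route as the paper: integrate by parts in the product involving $\heat_t f$, pass to the limit $t\searrow 0$ using Lemma \ref{convergenceprec}, and compare with the Leibniz rule \eqref{leibniz} for $fg$. The only cosmetic difference is that the paper also regularizes $g$ via $\heat_s g$ and lets $s\searrow 0$ first (using the Sobolev product rule for $\heat_s g\cdot\heat_t f$), whereas you apply the already-proven BV Leibniz rule directly to $g\cdot\heat_t f$; after the paper's $s\searrow 0$ step the two arguments coincide.
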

\begin{proof}
We can write, thanks to the calculus rules,
\begin{equation}\notag
\int_\XX \heat_t f\heat_s g\dive\, v \dd{\mass}=-\int_\XX\heat_s g \nabla\heat_t f\,\cdot\, v \dd{\mass}-\int_\XX\heat_t f\nabla \heat_s g \,\cdot\, v\dd{\mass}.
\end{equation}
We let now first $s\searrow 0$ then $t\searrow 0$, use Lemma \ref{convergenceprec} and compare the outcome with the result given by \eqref{leibniz}.
\end{proof}

\begin{lem}\label{jumpvect}
	Let $(\XX,\dist,\mass)$ be an $\RCD(K,N)$ space of essential dimension $\mu$ and let $F\in\BVv^n$. Then 
	\begin{equation}\notag
		\DIFF F\mres S_F=(F^r-F^l)\nu_F^S\frac{\omega_{\mu-1}}{\omega_\mu}\HH^h\mres S_F.
	\end{equation}
\end{lem}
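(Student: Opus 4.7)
The plan is to reduce the statement to the scalar case, component by component. By Remark~\ref{normcomp} combined with $\DIFF F=\nu_F|\DIFF F|$, the $i$-th entry of the module-valued measure $\DIFF F$ is exactly $\DIFF F_i$, so it suffices to prove, for every $i=1,\dots,n$, that
\[
\DIFF F_i\mres S_F=(F_i^r-F_i^l)\nu_F^S\frac{\omega_{\mu-1}}{\omega_\mu}\HH^h\mres S_F.
\]
On $S_{F_i}$ this is a routine combination of ingredients already available. The scalar identity in Proposition~\ref{veryweakchain} gives $|\DIFF F_i|\mres S_{F_i}=(F_i^\vee-F_i^\wedge)\tfrac{\omega_{\mu-1}}{\omega_\mu}\HH^h\mres S_{F_i}$, whence $\DIFF F_i\mres S_{F_i}=\nu_{F_i}(F_i^\vee-F_i^\wedge)\tfrac{\omega_{\mu-1}}{\omega_\mu}\HH^h\mres S_{F_i}$. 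Formula \eqref{normalbig} from Proposition~\ref{normalvectval} then converts the vector factor $\nu_{F_i}(F_i^\vee-F_i^\wedge)$ into $(F_i^r-F_i^l)\nu_F^S$. Passing from $\mres S_{F_i}$ to $\mres S_F$ on the right hand side is costless: by the iterative construction in Proposition~\ref{normalvectval} one has $F_i^r=F_i^l$ throughout $S_F\setminus S_{F_i}$, so the factor $F_i^r-F_i^l$ kills the extra piece.

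The main step—and the only nontrivial one—is to prove that the restriction on the left behaves similarly, i.e.\ that
\[
|\DIFF F_i|(S_F\setminus S_{F_i})=0.
\]
I would not try to obtain this by decomposing $|\DIFF F_i|$ into jump, Cantor and absolutely continuous parts: controlling how a Cantor-type part of a $\BV$ function on an $\RCD$ space interacts with rectifiable sets is a delicate issue that is not prepared in the excerpt. Instead, the plan is to run a coarea-plus-Fubini argument. Applying \eqref{coareaeqdiff} and then \eqref{reprformula3} one writes
\[
|\DIFF F_i|(S_F\setminus S_{F_i})=\frac{\omega_{\mu-1}}{\omega_\mu}\int_\RR \HH^h\bigl(\partial^*\{F_i>t\}\cap (S_F\setminus S_{F_i})\bigr)\dd t.
\]
The key pointwise observation is that any $x\in\partial^*\{F_i>t\}$ must satisfy $F_i^\wedge(x)\le t\le F_i^\vee(x)$ (this follows directly from the definitions of $F_i^\vee,F_i^\wedge$ and of the essential boundary); if in addition $x\notin S_{F_i}$ then $F_i^\vee(x)=F_i^\wedge(x)=t$, so $\bar{F_i}(x)=t$. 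Therefore the integrand is dominated by $\HH^h(\{\bar{F_i}=t\}\cap S_F)$. Since $S_F$ is contained in the countable union of reduced boundaries coming from the points $t_j$ selected in the proof of Proposition~\ref{normalvectval}, the measure $\HH^h\mres S_F$ is $\sigma$-finite, and a Fubini swap gives
\[
\int_\RR \HH^h(\{\bar{F_i}=t\}\cap S_F)\dd t=\int_{S_F}\LL^1(\{\bar{F_i}(x)\})\dd\HH^h(x)=0,
\]
where I use that $\bar{F_i}(x)\in\RR$ for $\HH^h$-a.e.\ $x\in S_F$ by \eqref{finitenessprec}. This closes the argument.

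The delicate part is the reduction $|\DIFF F_i|(S_F\setminus S_{F_i})=0$: the coarea representation together with the inclusion $\partial^*\{F_i>t\}\setminus S_{F_i}\subseteq\{\bar{F_i}=t\}$ is what allows to bypass any explicit discussion of the diffuse part of $|\DIFF F_i|$. Once this point is accepted, everything else is algebraic bookkeeping using Proposition~\ref{veryweakchain}, Proposition~\ref{normalvectval} and Remark~\ref{normcomp}.
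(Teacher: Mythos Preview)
Your argument is correct. The componentwise reduction via Remark~\ref{normcomp}, the use of Proposition~\ref{veryweakchain} and \eqref{normalbig} on $S_{F_i}$, and the coarea/Fubini computation of $|\DIFF F_i|(S_F\setminus S_{F_i})=0$ are all sound; the last step is exactly the kind of manipulation carried out in the well-posedness discussion preceding Proposition~\ref{volpertprop} (cf.\ \eqref{relations}), and the $\sigma$-finiteness of $\HH^h\mres S_F$ is indeed available from the construction in Proposition~\ref{normalvectval}.

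The paper's own proof is organised differently: instead of arguing component by component, it first reduces to the scalar identity $|\DIFF F|\mres S_F=|F^r-F^l|\tfrac{\omega_{\mu-1}}{\omega_\mu}\HH^h\mres S_F$ and then identifies the density $\rho$ of $|\DIFF F|\mres S_F$ with respect to $\HH^h\mres S_F$ via the normalisation $|\nu_F|=1$, which by \eqref{coordnorm} gives $\sum_i\bigl(\tfrac{d|\DIFF F_i|}{d|\DIFF F|}\bigr)^2=1$ and hence $\rho^2=\sum_i(F_i^\vee-F_i^\wedge)^2$. Both routes ultimately rely on the vanishing of $|\DIFF F_i|$ on $S_F\setminus S_{F_i}$; the paper takes this from \cite[Theorem~5.3]{ambmirpal04} (the diffuse part of a scalar $\BV$ function vanishes on $\HH^h$-$\sigma$-finite sets), while you supply a self-contained coarea argument. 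Your approach is a bit more elementary and makes this point explicit; the paper's approach is shorter once the cited result is granted and has the minor advantage of producing the full Euclidean norm $|F^r-F^l|$ in one stroke rather than assembling it from components.
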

\begin{proof}
By Proposition \ref{normalvectval}, \eqref{dfmressf} and \eqref{coordnorm} it is enough to show that $$\abs{\DIFF F}\mres S_F=|{F^r-F^l}|\frac{\omega_{\mu-1}}{\omega_\mu}\HH^h\mres S_F.$$

First notice that as $\abs{\DIFF F}\le\sum_{i=1}^n \abs{\DIFF F_i}$ and by \eqref{dfmressf} we know that $\abs{\DIFF F}\mres S_F=\rho\frac{\omega_{\mu-1}}{\omega_\mu}\HH^h\mres S_F$ for some $\rho\in L^1(\HH^h\mres S_F)$. Also, by \eqref{coordnorm}, $$\sum_{i=1}^n \left(\dv{\abs{\DIFF F_i}}{\abs{\DIFF F}}\right)^2=1\quad\abs{\DIFF F}\text{-a.e.}$$
which reads, recalling \eqref{dfmressf}
$$
\sum_{i=1}^n\left( \frac{(F_i^\vee-F_i^\wedge)}{\rho}\right)^2=1\quad\abs{\DIFF F}\text{-a.e.}$$
ant then yields the claim.
\end{proof}

We state now the main result of this section, namely the Vol'pert chain rule formula, proved in the smooth setting in \cite{Vol_pert_1967} (see also \cite{vol1985analysis}). Here we adopt a completely different proof, to avoid the original blow-up argument. We state the formula \eqref{volpert1} in the form of the so called Vol'pert averaged superposition. 
\begin{thm}[Chain rule for vector valued functions]\label{volprop}
	Let $(\XX,\dist,\mass)$ be an $\RCD(K,N)$ space and $F\in\BVv^n$. Let $\phi\in C^1(\RR^n;\RR^m)\cap\LIP(\RR^n;\RR^m)$ for some $m\in\NN$, $m\ge 1$ such that $\phi(0)=0$.
	Then $\phi\circ F\in\BVv^m$ and 
	\begin{equation}\label{volpert1}
	\DIFF(\phi\circ F)=\left(\int_0^1 \nabla\phi(t F^r+(1-t) F^l)\dd{t}\right)\DIFF F.
	\end{equation}

\end{thm}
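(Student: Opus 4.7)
The plan is to bootstrap from the scalar chain rule (Proposition~\ref{volpertprop}) and the Leibniz rule (Proposition~\ref{leibnizprop}), as announced in the introduction: first establish the formula when $\phi$ is a polynomial and $F\in\BVv^n\cap\Lpi^n$; then approximate a general $\phi\in C^1(\RR^n;\RR^m)\cap\LIP(\RR^n;\RR^m)$ in $C^1$-norm by polynomials on a compact set containing the essential range of $F$; and finally remove the boundedness assumption on $F$ by truncation.

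For the polynomial case, linearity in $\phi$ reduces matters to scalar monomials $\phi(y)=y_1^{a_1}\cdots y_n^{a_n}$, and I would induct on $|a|=a_1+\dots+a_n$. The inductive step writes $\phi=y_i\psi$ and applies Proposition~\ref{leibnizprop} to get
\[
\DIFF(\phi\circ F) = \overline{F_i}\,\DIFF(\psi\circ F)+\overline{\psi\circ F}\,\DIFF F_i.
\]
The crucial observation is that, by Proposition~\ref{normalvectval}(iii), for any continuous $\chi\in C(\RR^n)$ one has
\[
(\chi\circ F)^r=\chi(F^r),\qquad (\chi\circ F)^l=\chi(F^l),\qquad \overline{\chi\circ F}=\tfrac12(\chi(F^r)+\chi(F^l))
\]
$\abs{\DIFF F}$-a.e.\ on $S_F$ (trivially on $\XX\setminus S_F$, where $F^r=F^l=\bar F$). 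Using this to rewrite $\overline{\psi\circ F}$ and plugging in the inductive hypothesis for $\DIFF(\psi\circ F)$, the Leibniz output telescopes into the right-hand side of \eqref{volpert1}: outside $S_F$ directly, and on $S_F$ via the algebraic identity $\phi(F^r)-\phi(F^l)=\int_0^1\nabla\phi(tF^r+(1-t)F^l)\cdot(F^r-F^l)\,\dd t$ combined with Lemma~\ref{jumpvect} applied to $F$ and to $\phi\circ F$ (after identifying $\nu_{\phi\circ F}^S=\pm\nu_F^S$ consistently with the sign convention for $F^r,F^l$).

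For the passage from polynomial $\phi$ to general $C^1\cap\LIP$ with $F$ still bounded: since $F^r,F^l$ take values $\abs{\DIFF F}$-a.e.\ in the compact cube $K\defeq[-\|F\|_\infty,\|F\|_\infty]^n$, standard $C^1(K)$-density of polynomials (e.g.\ by mollification followed by Bernstein approximation, then normalising so $\phi_k(0)=0$) furnishes polynomials $\phi_k$ with $\phi_k\to\phi$ and $\nabla\phi_k\to\nabla\phi$ uniformly on $K$ and with uniformly bounded Lipschitz constants. Pairing the already-proved formula for each $\phi_k$ with an arbitrary test vector field $v\in\TestV(\XX)^m$ and applying dominated convergence (on $\mass$ for the left-hand side, on $\abs{\DIFF F}$ for the right-hand side) gives the formula for $\phi$. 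Finally, for unbounded $F$, set $F^{(k)}_i\defeq(F_i\vee -k)\wedge k$: the scalar chain rule yields $\abs{\DIFF(F-F^{(k)})}(\XX)\to 0$ and, since $\phi$ is Lipschitz, $\phi\circ F^{(k)}\to\phi\circ F$ in $\Lpu$, so one last passage to the limit in the test-vector-field pairing closes the argument.

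The main obstacle, I expect, is the algebraic bookkeeping in the inductive step of the polynomial case: making sure that after repeated Leibniz the precise-representative products $\overline{F_{i_1}\cdots F_{i_s}}$ that appear are precisely the symmetric averages $\tfrac12(F_{i_1}^r\cdots F_{i_s}^r+F_{i_1}^l\cdots F_{i_s}^l)$ required by the chord-integral formula. This rests on the strong one-sided $L^1$-asymptotics along a set of finite perimeter provided by Proposition~\ref{normalvectval}(iii), which is the genuinely finite-dimensional input of the whole argument.
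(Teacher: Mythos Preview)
Your proposal is correct and follows essentially the same route as the paper: reduce to bounded $F$, prove the formula for polynomials via induction using the Leibniz rule (Proposition~\ref{leibnizprop}) and the identification $\overline{\psi\circ F}=\tfrac12(\psi(F^r)+\psi(F^l))$ on $S_F$ coming from Proposition~\ref{normalvectval}, then approximate general $\phi$ in $C^1$ on the relevant cube, and finally truncate to remove the boundedness assumption on $F$. The only cosmetic differences are that the paper reduces monomials to the special form $x_1\cdots x_n$ by repeating components of $F$ (inducting on $n$ rather than on the total degree) and obtains the precise-representative identity by applying Proposition~\ref{normalvectval} to the augmented map $(F_1,\dots,F_{n-1},F_1\cdots F_{n-1})$ rather than invoking item~$iii)$ directly; both yield the same computation.
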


\begin{proof}
	First of all, 	recall (e.g.\ \cite[Theorem 5.3]{ambmirpal04}) that for every $i=1,\dots,n$, $\abs{\DIFF F_i}\mres(\XX\setminus S_{F_i})$ vanishes on sets that are $\sigma$-finite with respect to $\HH^h$, in particular on $S_F$. 
	By Remark \ref{normcomp}, we see that we can assume $m=1$ with no loss of generality.
	
Recalling the very definition of $\DIFF F$ and \eqref{coordnorm}, it suffices to prove
\begin{equation}\label{toprovevvchain}
		\begin{split}
	&\int_\XX \phi\circ F  \dive\, v\dd{\mass}=-\sum_{i=1}^n\int_\XX v\,\cdot\,\nu_{F_i} \left(\int_0^1 \partial_i\phi(t F^r+(1-t) F^l)\dd{t}\right)\dd{\abs{\DIFF F_i}}
\end{split}
\end{equation}
for every $v\in\TestV(\XX)$. Let $\mu$ denote the essential dimension of the space.

	Assume for the moment $F\in(\BVv\cap\Lpi)^n$, say $\abs{F_i}\le L\ \mass$-a.e.\ for every $i=1,\dots,n$, for some $L>0$. 
We start by showing that
\begin{equation}\label{firstclaim}
		\begin{split}
			\int_\XX \phi\circ F  \dive\, v\dd{\mass}&=-\sum_{i=1}^n\int_\XX v\,\cdot\,\nu_{F_i} \partial_i\phi(\bar{F})\dd{\abs{\DIFF F_i}}\mres (\XX\setminus S_F)\\&\qquad-\int_\XX v\,\cdot\,\nu_{F}^S  (\phi(F^r)-\phi(F^l))\frac{\omega_{\mu-1}}{\omega_\mu}\dd{\HH^h}\mres S_F
		\end{split}
	\end{equation}
for every $v\in\TestV(\XX)$. Then \eqref{toprovevvchain} follows in the case $F\in(\BVv\cap\Lpi)^n$, using Lemma \ref{jumpvect} and the construction of $\nu_F^S$ (Proposition \ref{normalvectval}).

Notice that if \eqref{firstclaim} holds for a sequence $\{\phi_k\}_k\subseteq C^1(\RR^n)\cap\LIP(\RR^n)$ with $\phi_k(0)=0$ and $(\phi_k,\nabla\phi_k)\rightarrow( \phi,\nabla\phi)$  uniformly on $[-L,L]^n$, then  \eqref{firstclaim} holds also for $\phi$.

Let $\epsilon>0$.
By a mollification and cut-off argument, we find $\tilde{\phi}\in C^\infty(\RR^n)$ such that $\supp \phi\subseteq [-2L,2L]^n$, $\tilde{\phi}(0)=0$ and
\begin{equation}\notag
\sup_{x\in [-L,L]^n} |{\phi(x)-\tilde{\phi}(x)}|<\epsilon\quad\text{and}\quad \sup_{x\in [-L,L]^n} |{\nabla\phi(x)-\nabla\tilde{\phi}(x)}|<\epsilon.
\end{equation} Now, by the Stone--Weierstrass Theorem, we find a polynomial $g:\RR^n\rightarrow\RR$ such that
\begin{equation}\notag
\sup_{x\in [-2 L, 2 L]^n} |{\partial_1\cdots\partial_n\tilde{\phi}(x)}-g(x)|<\epsilon.
\end{equation}
Set now $$\hat{\phi}((x_1,\dots,x_n))\defeq\int_{-2 L}^{x_1}\dd{s_1}\cdots\int_{-2 L}^{x_n}\dd{s_n} g((s_1,\dots, s_n)),$$
it is not hard to verify that
\begin{equation}\notag
	\sup_{x\in [-L,L]^n} |{\tilde{\phi}(x)-\hat{\phi}(x)}|<C\epsilon\quad\text{and}\quad \sup_{x\in [-L,L]^n} |{\nabla\tilde{\phi}(x)-\nabla\hat{\phi}(x)}|<C \epsilon,
\end{equation}
where $C$ depends only on $L$ and $n$. Eventually adding to $\hat{\phi}$ a constant, we can assume that $\hat{\phi}(0)=0$.

As $\hat{\phi}$ is a polynomial, we see that we can prove \eqref{firstclaim} for a polynomial  $\phi$ (to be more precise, with a function that coincides with a polynomial on a sufficiently large neighbourhood of $0$).
	Therefore, using also linearity, we see that we can assume with no loss of generality that $\phi$ is a monomial (to be more precise, $\phi$ coincides with a monomial on a sufficiently large neighbourhood of $0$). Also, up to changing $n$ and repeating some function $F_i$, we can assume that $$\phi(x_1,\dots,x_n)=x_1\cdots x_n.$$
	
	We proceed by induction on $n$. If $n=1$ the conclusion follows from item $ii)$ of Proposition \ref{normalvectval}, \eqref{dfmressf} and Theorem \ref{weakder2}.
	
We check now the inductive step. By \eqref{leibniz}
	\begin{equation}\label{tmpvp}
			-\int_\XX  F_1\cdots F_n  \dive\, v\dd{\mass}= \int_\XX  \overline{F_n} v\,\cdot\, \nu_{F_1\cdots F_{n-1}}\abs{\DIFF(F_1\cdots F_{n-1})}+\int_\XX\overline{F_1\cdots F_{n-1}} v\,\cdot\nu_{F_n}\abs{\DIFF F_n}.
	\end{equation}
	We set $G\defeq(F_1,\dots ,F_{n-1})$.
	Using Lemma \ref{convergenceprec} to approximate $F_n$ and the inductive hypothesis, we can rewrite the first addend on the right hand side of \eqref{tmpvp} as 
	\begin{equation}\notag
		\begin{split}
			&\sum_{i=1}^{n-1}\int_\XX \overline{F_n}  v\,\cdot\,\nu_{F_i} \overline{F_1}\cdots\overline {F_{i-1}}\,\overline {F_{i+1}}\cdots \overline{F_{n-1}}\dd{\abs{\DIFF F_i}}\mres (\XX\setminus S_G)\\&\quad+\frac{\omega_{\mu-1}}{\omega_\mu}\int_\XX \overline{F_n}v\,\cdot\, {\nu}_{G}^S  (F_1^r\cdots F_{n-1}^r-F_1^l\cdots F_{n-1}^l)\dd{\HH^h}\mres S_G.
		\end{split}
	\end{equation}

	Using also Proposition \ref{normalvectval}, we can rewrite the second addend on the right hand side of \eqref{tmpvp} as
	\begin{equation}\notag
		\begin{split}
			&\int_\XX\overline{F_1}\cdots \overline{F_{n-1}} v\,\cdot\nu_{F_n}\abs{\DIFF F_n}\mres(\XX\setminus S_{F_n})+\frac{\omega_{\mu-1}}{\omega_\mu}\int_\XX \overline{F_1 \cdots F_{n-1}} v\,\cdot\,\nu_{F_n} (F_n^\vee-F_n^\wedge)\dd{\HH^h}\mres S_{F_n}. 
		\end{split}
	\end{equation}
	Now, by the very construction of $\nu_F^S$ (see the proof of Proposition \ref{normalvectval}),
	\begin{equation}\notag
		\begin{split}
			&\int_\XX \overline{F_n}v\,\cdot\,{ \nu}_{G}^S  (F_1^r\cdots F_{n-1}^r-F_1^l\cdots F_{n-1}^l)\dd{\HH^h}\mres S_G\\&=\frac{1}{2}\int_\XX v\,\cdot\,{ \nu}_{F}^S (F_n^r+F_n^l)(F_1^r\cdots F_{n-1}^r-F_1^l\cdots F_{n-1}^l)\dd{\HH^h}\mres S_F
		\end{split}
	\end{equation}
	and also
	\begin{equation}\notag
		\begin{split}
			&\int_\XX \overline{F_1 \cdots F_{n-1}} v\,\cdot\,\nu_{F_n} (F_n^\vee-F_n^\wedge)\dd{\HH^h}\mres S_{F_n}\\&=
			\frac{1}{2} \int_\XX v\,\cdot\,{ \nu}_{F}^S (F_n^r-F_n^l)(F_1^r\cdots F_{n-1}^r+F_1^l\cdots F_{n-1}^l)\dd{\HH^h}\mres S_F,
		\end{split}
	\end{equation}
where we used $$(F_1\cdots F_{n-1})^\vee+(F_1\cdots F_{n-1})^\wedge=F_1^r\cdots F_{n-1}^r+F_1^l\cdots F_{n-1}^l\quad\HH^h\text{-a.e.\ on } S_F$$ as a consequence of Proposition \ref{normalvectval} applied to $(F_1,\dots, F_{n-1},F_1\cdots F_{n-1})$.
	Putting all this together, we conclude the proof of the inductive step.
	
	Now we get rid of the assumption $F\in\Lpi^n$ using an approximation argument. In this procedure, we consider the approximating sequence $\{F^m\}_m$ as in \eqref{approxsucc}. Now we can let $m\rightarrow\infty$ in  \eqref{toprovevvchain} for $F^m$, recalling Lemma \ref{normcut}, \eqref{finitenessprec} and \eqref{coareaeqdiff}. 
\end{proof}

\bibliographystyle{alpha}
\bibliography{Biblio11}
\end{document}